\documentclass[11pt]{amsart}
\setlength{\textheight}{23cm}
\setlength{\textwidth}{16cm}
\setlength{\topmargin}{-0.8cm}
\setlength{\parskip}{0.3\baselineskip}
\hoffset=-1.4cm

\usepackage{amsmath,amssymb, xypic,verbatim,eucal, amscd,color,mathrsfs}
\usepackage{mathtools}
\usepackage{graphicx}
\usepackage{colordvi}
\usepackage{mathdots}
\usepackage[colorlinks=true, pdfstartview=FitV,
linkcolor=cyan, citecolor=red, urlcolor=blue]{hyperref}
\usepackage{amsfonts,latexsym,bm}
\usepackage{tikz-cd}

\usepackage{yhmath}
\usepackage{accents}

\usepackage[all,arc]{xy}
\input xy

\numberwithin{equation}{section}

\theoremstyle{plain}
				
\newtheorem{theorem}{Theorem}[section]				
\newtheorem{proposition}[theorem]{Proposition}		
\newtheorem{corollary}[theorem]{Corollary}
\newtheorem{lemma}[theorem]{Lemma}

\theoremstyle{definition}

\newtheorem{definition}[theorem]{Definition}
\newtheorem{remark}[theorem]{Remark}
\newtheorem{question}[theorem]{Question}




\newcommand{\CBbb}{\mathbb C}



\newcommand{\Pcal}{\mathcal P}

\newcommand{\Tcal}{\mathcal T}



\newcommand{\gfrak}{\mathfrak g}
\newcommand{\hfrak}{\mathfrak h}



\newcommand{\Lscr}{\mathscr L}



\newcommand{\SL}{\mathsf{SL}}

\newcommand{\GL}{\mathsf{GL}}



\DeclareMathOperator{\id}{id}

\DeclareMathOperator{\Sym}{Sym}

\DeclareMathOperator{\ad}{ad}



\newcommand{\lra}{\longrightarrow}

\newcommand\norder[1]{\vcentcolon\mathrel{#1}\vcentcolon}

\newcommand{\Gr}{{\rm Gr}}

\newcommand{\isorightarrow}{\xrightarrow{
   \,\smash{\raisebox{-0.5ex}{\ensuremath{\sim}}}\,}}

\AtBeginDocument{%
	\def\MR#1{}
}

\begin{document}

\title[Geometrization of the TUY/WZW/KZ connection]{Geometrization of the TUY/WZW/KZ 
connection}

	\author[Biswas]{Indranil Biswas}
	\address{Mathematics Department, Shiv Nadar University, NH91, Tehsil Dadri,
		Greater Noida, Uttar Pradesh 201314, India}
	\email{indranil.biswas@snu.edu.in, indranil29@gmail.com}
	\author[Mukhopadhyay]{Swarnava Mukhopadhyay}
	\address{School of Mathematics,
		Tata Institute of Fundamental Research,
		Mumbai-400005, India}	\email{swarnava@math.tifr.res.in}
	\author[Wentworth]{Richard Wentworth}
	\address{Department of Mathematics,
		University of Maryland,
		College Park, MD 20742, USA}
	\email{raw@umd.edu}
	
	\thanks{I.B. 
is	supported in part by a J.C. Bose fellowship and both I.B. and S.M. are also partly supported by DAE, India
		under project no. 1303/3/2019/R\&D/IIDAE/13820. S.M.
		received additional funding from the Science and Engineering Research
		Board, India (SRG/2019/000513). R.W. is 
		supported in part by National Science Foundation grants DMS-1906403
        and DMS-2204346.}

	\subjclass[2020]{Primary: 14H60, 32G34, 53D50; Secondary: 81T40, 14F08}
	
\begin{abstract}
Given a simple, simply connected, complex algebraic group $G$, a flat projective connection 
on the bundle of nonabelian theta functions on the moduli space of semistable parabolic 
$G$-bundles over any family of smooth projective curves with marked points was constructed 
by the authors in an earlier paper. Here, it is shown
that the identification between the bundle of nonabelian theta 
functions and the bundle of WZW conformal blocks
is flat with respect to this connection and the one constructed by Tsuchiya-Ueno-Yamada. 
As an application, we give a 
geometric construction of the Knizhnik-Zamolodchikov connection on the trivial bundle over 
the configuration space of points in the projective line whose typical fiber is the 
space of invariants of tensor product of representations.
\end{abstract}

\maketitle

\tableofcontents

\allowdisplaybreaks

\section{Introduction}

The Wess-Zumino-Witten (WZW) model \cite{Novikov:82, Witten:84} is a 
cornerstone of two dimensional rational conformal field theories
\cite{BPZ:84, MooreSeiberg:90}.
The WZW 
conformal blocks were constructed mathematically by Tsuchiya-Ueno-Yamada \cite{TUY:89}.
Let $\widehat{\frak{g}}$ be 
an affine Lie algebra and
$(C,\,\bm p)$ a smooth curve $C$ with $n$-distinct marked points $\bm 
p\,=\,(p_1,\,\cdots,\, p_n)$. Choose formal coordinates $\bm
\xi\,=\,(\xi_1,\,\cdots,\, \xi_n)$ around $\bm p$, and using these coordinates assign a copy of 
$\widehat{\frak{g}}$ to each point $p_i$. Fix a positive integer $\ell$.
Then for any 
choice of $n$-tuple of integrable highest weights
$\vec{\lambda}\,=\,(\lambda_1,\,\cdots, \,\lambda_n)$ of level $\ell$, 
the construction in \cite{TUY:89} associates a finite dimensional vector space 
$\mathcal{V}_{\vec{\lambda}}^{\dagger}(C, \bm p,\bm \xi, \frak{g},\ell)$ to the data 
$(C,\,\bm p,\, \bm \xi)$. For 
a family of smooth curves $\pi\,:\,\mathcal{C}\,\to\, S$ with 
$n$-distinct sections $\bm p$,
these vector spaces patch together to produce a coherent sheaf 
$\mathcal{V}^{\dagger}_{\vec{\lambda}}(\frak{g}, \ell)\,\to\, S$.
The Sugawara construction \cite{TUY:89} endows this sheaf 
with the structure of 
a twisted $\mathcal{D}$-module, and hence 
$\mathcal{V}_{\vec{\lambda}}^{\dagger}(\frak{g}, \ell)$ is actually a holomorphic vector bundle. 
The authors of \cite{TUY:89} show that this vector bundle extends to the 
Deligne-Mumford-Knudsen compactification $\overline{\mathcal{M}}_{g,n}'$ of the moduli 
spaces of $n$-pointed curves $\mathcal{M}_{g,n}'$
with chosen formal coordinates. Moreover, the flat projective 
connection on the interior $\mathcal{M}_{g,n}'$ extends to a flat projective connection 
with logarithmic singularities over $\overline{\mathcal{M}}_{g,n}'$. 
 The bundle $\mathcal{V}^{\dagger}_{\vec{\lambda}}(\frak{g},
\ell)\,\to\, S$ of conformal blocks is sometimes called  the \emph{Friedan-Shenker
bundle}. We refer to the above mentioned
flat projective connection on $\mathcal{V}^{\dagger}_{\vec{\lambda}}(\frak{g},
\ell)\,\to\, S$ as the {\em WZW/TUY connection}.
	
Later, Tsuchimoto \cite{Tsuchimoto} gave a coordinate free construction of the bundle of 
conformal blocks and showed that  it
descends  to a vector bundle 
$\mathbb{V}_{\vec{\lambda}}^{\dagger}(\frak{g},\ell)$ on the Deligne-Mumford-Knudsen moduli 
space $\overline{\mathcal{M}}_{g,n}$ of $n$-pointed stable nodal curves (cf.\ Fakhruddin \cite{Fakhruddin:12}). The flat 
projective connection also descends to
a projective connection with logarithmic singularities.
In other words, there is a projectively flat isomorphism between
the conformal blocks $\mathcal{V}_{\vec{\lambda}}^{\dagger}(\frak{g},\ell)$
and the pullback $F^*\mathbb{V}_{\vec{\lambda}}^{\dagger}(\frak{g},\ell)$
under the 
natural forgetful map $F\,:\, \overline{\mathcal{M}}_{g,n}'\,\to\, 
\overline{\mathcal{M}}_{g,n}$. We refer the reader to Section 
\ref{sec:conformalblocksbasic} for a construction of conformal blocks and
to Section 
\ref{sec:connection} for the construction of the WZW/TUY connection.

We now discuss how conformal blocks are related to moduli spaces of bundles on 
curves.
The moduli space $M_G(C)$ of principal bundles, with a reductive structure group $G$, on a 
smooth projective curve $C$, provides a natural nonabelian generalization of the Jacobian 
variety $J(C)$, which parametrizes line bundles of degree zero on $C$. The moduli space of 
(semistable) principal $G$-bundles on a smooth projective algebraic curve is itself a projective 
variety. It was originally constructed through Geometric Invariant Theory. Its smooth 
locus  parametrizes isomorphism classes of  stable bundles with minimal automorphism groups (see
\cite{BiswasHoffmann:12}), also known as the regularly stable loci. There are various important variations on this
construction.
One can choose marked points $\bm p\,=\,(p_1,\,\cdots,\, p_n)$ on the algebraic curve
$C$ and decorate a principal $G$-bundle $\Pcal$ with a generalized flag structure over $\bm p$, 
leading to the notion of quasi-parabolic bundles. Additionally,  one can choose weight data $\bm 
\tau\,=\,(\tau_1,\,\cdots, \,\tau_n)$ in the Weyl alcoves, or equivalently weights 
$\vec{\lambda}\,=\,(\lambda_1,\,\cdots,\, \lambda_n)$, and use them to
define  a suitable notion of 
stability and semistablity. The corresponding moduli spaces $M^{par,ss}_{G, \bm 
\tau}(C,\bm p)$ can, in turn, be understood as the space of representations of the fundamental 
group  of the corresponding punctured surface
$C\setminus \{\bm p\}$, where the loops around the marked points go 
to fixed conjugacy classes determined by $\bm\tau$
\cite{MehtaSeshadri, SeshadriI}. This generalizes the
classical results of Narasimhan-Seshadri \cite{NarSesh} and Ramanathan
\cite{Ramanathan:75}, proved in the non-parabolic case.

The moduli space $M_{G, \bm \tau}^{par,ss}(C,\bm p)$ is equipped with a natural ample 
{\em determinant of cohomology} line bundle $\operatorname{Det}_{par, \phi}(\bm \tau)$ 
associated to a choice of faithful linear representation $\phi$ of $G$. This generalizes the 
theta line bundle on the Jacobian variety $J(C)$. Therefore, the global sections of this line bundle
on $M_{G, \bm \tau}^{par,ss}(C,\bm p)$
can thus be thought of as a nonabelian generalization of the classical theta functions.
We refer the reader to Section \ref{sec:parabolicmoduli} for more 
details on the constructions of the moduli space
and the parabolic determinant line bundle on it.
	
Via  the uniformization theorems of Harder
and Drinfeld-Simpson \cite{DS, Harder},   moduli spaces of parabolic
bundles also have an ad\`elic description that directly connects to the representation theory 
of affine Lie algebras via the work of several authors
(see \cite{BeauvilleLaszlo:94, Faltings:94, KNR:94, LaszloSorger:97,
Pauly:96, Sorger99}). Using this,
 the corresponding moduli stack of 
principal $G$-bundles and its parabolic analog $\mathcal{P}ar_G(C,\bm p, \bm \tau)$ can 
be expressed as a double quotient
$$\mathcal{P}ar_G(C,\bm p,\bm \tau)\,=\,G(\Gamma(C
,\mathcal{O}_C(*\bm p))\backslash
\prod_{i=1}^n G(\mathbb{C}((\xi_i)))/\mathcal{P}_{i}\ , $$ 
where 
the $\mathcal{P}_i$ are parahoric subgroups of $G(\mathbb{C}[[\xi_i]])$ determined by the 
weights $\tau_i$. The weights also determine a  homogeneous
$G(\Gamma(C,\,\mathcal{O}_C(*\bm p)))$-equivariant line bundle $\mathscr{L}_{\vec{\lambda}}$ on 
$\mathcal{P}ar_G(C,\bm p,\bm \tau)$. The line bundle $\operatorname{Det}^{\otimes a}_{par, 
\phi}(\bm \tau)$ coincides with $\mathscr{L}_{\vec{\lambda}}$, where $a$ is a rational number 
determined by the Dynkin index of the representation $\phi$.
Generalizations (see Kumar \cite{kumarbook}, Mathieu \cite{ Mathieu}) of the Borel-Weil theorems (see
\eqref{eqn:pizzatheorem}) for affine flag varieties $G(\mathbb{C}((\xi)))/\mathcal{P}_i$, 
coupled with the ad\`elic description, give a canonical isomorphism (see 
\eqref{eqn:impprojisomorphism}) with conformal blocks
$$\mathcal{V}^{\dagger}_{\vec{\lambda}}(C, \bm p, \bm \xi, \frak{g}, \ell)
\,\cong\, H^0(M^{par, ss}_{G,\bm \tau}(C, \bm p),\, \operatorname{Det}^{\otimes a}_{par, \phi}(\bm \tau)).$$
This isomorphism can be reinterpreted as the Chern-Simon/WZW correspondence. More details 
are given in Section \ref{sec:uniformization}.

Using differential geometric methods, Hitchin, \cite{Hitchin:90}, generalizes a construction 
of Mumford-Welters \cite{welters} to obtain a flat projective connection on
the Friedan-Shenker  
bundle with fibers $H^0(M_G(C),\, \operatorname{Det}^{\otimes \ell})$, from
the viewpoint of {\em 
geometric quantization } in the sense of Kostant-Souriau. This connection also appears in 
Witten's \cite{Witten89} interpretation of Jones polynomial link invariants as 
3-manifold invariants. Hitchin's construction was reinterpreted by van Geemen-de Jong 
\cite{VanGeemenDeJong:98}  sheaf theoretically in terms of the existence of
a ``heat operator", which in the relative setting
is a differential operator that is a combination
of a  first order operator with one that is second order on the fibers (see Section \ref{sec:TDOHitchindejong}).
We recall the details of the general methods of 
Hitchin-van Geemen-de Jong \cite{VanGeemenDeJong:98} in Section \ref{sec:TDOHitchindejong}. 
We also refer to the several complementary approaches of Andersen \cite{Andersen:12}, Axelrod-Witten-della Pietra 
\cite{ADW}, Baier-Bolognesi-Martens-Pauly \cite{BBMP20}, Faltings \cite{Faltings:93}, 
Ginzburg \cite{Ginzburg},  Ran \cite{Ran}, Ramadas \cite{RamaHitchin},  Sun-Tsai \cite{ST} and for 
generalizations to reductive groups, Belkale \cite{Belkale:09}.

In \cite{Laszlo}, Laszlo showed that the connection constructed by Hitchin and the one 
in \cite{TUY:89} coincide under the natural identification
of $H^0(M_{G}(C),\, \operatorname{Det}^{\otimes \ell})$ with 
$\mathcal{V}_{0}^{\dagger}(C,\frak{g},\ell)$. A similar result for twisted Spin groups was 
also proved by Mukhopadhyay-Wentworth \cite{MW}. The following questions are natural in the 
context of parabolic moduli spaces:
\begin{enumerate}
\item Is there a projective heat operator (see Section \ref{sec:TDOHitchindejong} and 
Definition \ref{def:heatoperator}) on the line bundle $\operatorname{Det}^{\otimes 
a}_{par, \phi}(\bm \tau))$ that induces a flat projective connection on the vector bundle 
over $\mathcal{M}_{g,n}$ with fibers $H^0(M^{par, ss}_{G,\bm \tau}(C, \bm p), 
\,\operatorname{Det}^{\otimes a}_{par, \phi}(\bm \tau))$?

\item If such a connection exists, is the identification of conformal blocks with nonabelian 
parabolic theta functions flat with respect to this connection and the WZW/TUY 
connection?
\end{enumerate}

 For $g(C)\,\geq\, 2$, Scheinost-Schottenloher,
\cite{SS}, constructed a parabolic Hitchin connection for $G\,=\,\SL_r$ under the assumption 
that the canonical bundle of $M^{par,ss}_{\SL_r,\bm \tau}(C,\bm p)$ admits a square-root. 
Bjerre \cite{BjerreThesis} removed the ``restriction" in \cite{SS} for $G\,=\,\SL_r$ 
by working on a different parabolic moduli space with full flags. In both 
\cite{BjerreThesis, SS}, the authors construct a connection on the push-forward 
``metaplectically corrected" line bundles of the form $\operatorname{Det}^{\otimes a}_{par, 
\phi}(\bm \tau) \otimes  K_{M^{par,ss}_{\SL_r,\bm \tau}(C,\bm p)}^{1/2}$. We also refer the 
reader to Remark \ref{rem:nonample}. In \cite{BMW1},
we constructed a projective heat operator on 
$\operatorname{Det}^{\otimes a}_{par, \phi}(\bm \tau)$ in general. 
This was produced  from a candidate parabolic Hitchin 
symbol (see \eqref{eqn:symbolonbottom}) satisfying a Hitchin-van Geemen-de Jong type 
equation (see \eqref{eqn:fundamental}){}\footnote{Subsequent to the submission of our paper \cite{BMW1}, in
May 2023 a draft of  the thesis of Zakaria Ouaras appeared in which the author proves the existence
of a unique flat projective connection in the case of moduli spaces of parabolic vector bundles with arbitrary fixed determinant and genus $g \geq 2$}
${}^{,}$ \footnote{We have been informed \cite{Andersen:personal}
that in the case of genus
zero, $\SL_2$, and equal weights $\lambda$ sufficiently small so that
conformal blocks are invariants, the Hitchin
connection constructed in \cite{Andersen:12} agrees with the KZ equation.}.

The following result answers the  above question (2)
and thus generalizes
the result of Laszlo, proved in the non-parabolic case.

\begin{theorem}[{\sc Main Theorem}] \label{thm:maintheorem}
Let $S$ parametrize a smooth family of $n$-pointed curves.
Let $\pi_e\,:\, M^{par,rs}_{G,\bm \tau}
\,\to\, S$ be the relative moduli space parametrizing regularly stable parabolic $G$
bundles, i.e., stable parabolic bundles with minimal automorphisms. The natural isomorphism
$$\mathbb{P}\mathbb{V}^{\dagger}_{\vec{\lambda}}(\frak{g},\ell)\,\isorightarrow\,
\mathbb{P}\pi_{e*}\operatorname{Det}_{par,\phi}^{\otimes a}(\bm \tau)\ , $$
    constructed via the uniformization theorem,
between the projectivizations of the bundles of conformal blocks and nonabelian
parabolic theta functions,  is flat for the
WZW/TUY connection on $\mathbb{P}\mathbb{V}^{\dagger}_{\vec{\lambda}}(\frak{g},\ell)$
and the parabolic Hitchin connection on $\mathbb{P}\pi_{e\ast}
\operatorname{Det}_{par,\phi}^{\otimes a}(\bm \tau)$.
\end{theorem}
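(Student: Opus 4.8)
The plan is to reduce the asserted flatness to an equality of two projective connections on one and the same bundle, and then to verify that equality by matching the second-order symbols of the underlying heat operators through the uniformization picture. Concretely, I would first transport both structures to the bundle $\pi_{e*}\operatorname{Det}_{par,\phi}^{\otimes a}(\bm\tau)$ by means of the isomorphism of the statement. On this bundle the parabolic Hitchin connection is, by the construction of \cite{BMW1}, induced by a projective heat operator whose symbol is the candidate parabolic Hitchin symbol of \eqref{eqn:symbolonbottom} and which satisfies the Hitchin-van Geemen-de Jong equation \eqref{eqn:fundamental}. The first step is therefore to exhibit the WZW/TUY connection, after the same transport, as a projective heat operator as well: the Sugawara construction expresses the action of an infinitesimal deformation of $(C,\bm p)$ on conformal blocks through the energy-momentum field, which is quadratic in the affine currents, so that via the Borel-Weil identification \eqref{eqn:pizzatheorem} and the adelic description of Section \ref{sec:uniformization} it becomes a second-order operator on the spaces of parabolic theta functions. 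This ``geometrization'' of the TUY connection is itself part of the work.

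Granting that both connections are projective heat operators on the common bundle, I would appeal to a uniqueness principle of the kind recalled in Section \ref{sec:TDOHitchindejong}: under the vanishing conditions that hold over the regularly stable locus $M^{par,rs}_{G,\bm\tau}$---in particular $H^0(\mathcal{O})=\mathbb{C}$ on the fibers---a projective heat operator is pinned down by its symbol, since two heat operators with equal symbols differ by an operator of order at most one whose residual ambiguity is a $1$-form on $S$ that is annihilated upon projectivization. Consequently the two projective connections coincide once their symbols agree, whereupon the identification intertwines them and is therefore flat, as asserted.

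The heart of the argument is thus the equality of symbols. Under uniformization a tangent vector to $S$ at $(C,\bm p)$ is encoded by a Kodaira-Spencer class in $H^1(C,\,T_C(-\bm p))$, represented by a meromorphic vector field with poles confined to $\bm p$. The Sugawara symbol sends this class to the corresponding Virasoro element, whose leading term is the Casimir, i.e.\ the quadratic expression in the currents; the parabolic Hitchin symbol of \eqref{eqn:symbolonbottom} sends the same class, through the Hitchin pairing, to the contraction of quadratic parabolic differentials against $\Sym^2$ of the cotangent directions of the moduli space. The claim---precisely Laszlo's computation \cite{Laszlo} in the non-parabolic case---is that these two assignments agree, with the Dynkin-index factor $a$ supplying the correct normalization, and I would establish it by a local computation at each $p_i$ matching the affine currents against the parahoric data $\mathcal{P}_i$ and reducing the identity to the standard fact that the Casimir maps to the quadratic Hamiltonian of the Hitchin integrable system.

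I expect the main obstacle to lie in this parabolic refinement of Laszlo's symbol computation at the marked points. Here the currents must be matched against the parahoric subgroups determined by the weights $\bm\tau$, and the symbol acquires contributions from the flag directions in addition to the bulk moduli directions; keeping track of the weight normalizations and of the poles of the energy-momentum field at $\bm p$---so that the quadratic differentials appearing on the Hitchin side are exactly the strongly parabolic ones dual to the parabolic deformations---is where genuinely new work is needed. A secondary, more technical, point is to confirm that the uniqueness principle applies verbatim over $M^{par,rs}_{G,\bm\tau}$: this requires the complement of the regularly stable locus to have codimension at least two, so that the heat operators and their symbols extend across it and the cohomological vanishing invoked above remains valid.
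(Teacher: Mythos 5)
Your overall architecture is the one the paper follows: transport both structures to the same bundle, realize the TUY/WZW connection as a projective heat operator via the Sugawara tensor and the ad\`elic/quasi-section picture, invoke uniqueness of a projective heat operator with prescribed symbol over the regularly stable locus, and reduce everything to a residue computation matching the Sugawara symbol against the parabolic Hitchin pairing. Your codimension-two caveat and the uniqueness argument are also exactly what the paper uses.

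However, you locate the ``genuinely new work'' in the wrong place, and in doing so you miss the step that the paper itself identifies as the crux. The local computation at the marked points goes through much as in Laszlo's non-parabolic case and yields, for the Sugawara side, the normalization $\tfrac{1}{2(\ell+h^{\vee}(\mathfrak{g}))}\,\widetilde{\rho}_{sym}$ --- note the dual Coxeter number shift. But the parabolic Hitchin connection of \cite{BMW1} is \emph{not} built from the bare symbol $\widetilde{\rho}_{sym}/(m_{\phi}\ell)$; its symbol carries the metaplectic correction factor $\bigl(1+\mu_{\operatorname{Det}_{par}(\bm\tau)}^{-1}\circ(\tfrac{1}{2}\cup[K_{M^{par,rs}_{G,\bm\tau}/S}])\bigr)$ appearing in \eqref{eqn:fundamental}. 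To match the two sides you must show that this corrected symbol equals $\widetilde{\rho}_{sym}/(m_{\phi}\ell+h^{\vee}(\mathfrak{g}))$, i.e.\ that the geometric side produces the same $h^{\vee}$ shift that Sugawara produces representation-theoretically. This is Proposition \ref{prop:simple}, and it is not a formal manipulation: it requires knowing that $\cup[K_{M^{par,rs}_{G,\bm\tau}/S}]$ acts as an explicit scalar multiple of $\mu_{\operatorname{Det}_{par}(\bm\tau)}$, which rests on Corollary \ref{cor:mostimportant} and ultimately on Theorem \ref{thm:indepdence} --- the statement that cup product with a rational line bundle on the parabolic moduli space, as a map $\pi_{e*}\operatorname{Sym}^2\mathcal{T}\to R^1\pi_{e*}\mathcal{T}$, depends only on the level. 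This level-independence is the geometric counterpart of the fact that Sugawara symbols do not depend on the highest weights, and it is precisely the ingredient absent from the non-parabolic argument you are modeling your computation on. Without it your symbols simply fail to agree, and ``the Dynkin-index factor supplying the correct normalization'' does not account for the discrepancy.
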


We are guided by  a fundamental observation
 that if an algebraic group $G$ acts on a smooth variety $X$ and $\mathscr{L}$ 
 is a $G$-equivariant  line bundle on $X$, then the map induced by 
 the Beilinson-Bernstein localization functor $\operatorname{Loc}:
 \mathcal{U}\mathfrak{g} \rightarrow \Gamma(X, \mathcal{D}(\mathscr{L}))$
 is a quantum analog of the moment map for the $G$ action on $X$, and the
 corresponding graded map $\Sym \mathfrak{g} \rightarrow \Gamma (X, gr
 (\mathcal{D}(\mathscr{L})))$ is dual to the moment map. Hence, it is
 ``independent" of the line bundle $\mathscr{L}$. Thus, an essential point in the proof of this theorem  is  
the fact that the  symbols of the Sugawara operators coming from affine Lie algebras do 
not depend on the highest weights. This is checked via a direct
calculation generalizing the non-parabolic counterparts in the works on Laszlo \cite{Laszlo} and Tsuchiya-Ueno-Yamada \cite{TUY:89}.  The counterpart of this statement on the  moduli of parabolic
bundles side for the parabolic Hitchin symbol is therefore the crux of the
argument. This is carried out in Proposition \ref{prop:simple} using Corollary \ref{cor:mostimportant}. These are the key new features/differences  in  the proof of
Theorem \ref{thm:maintheorem} to that in non-parabolic case considered by Laszlo \cite{Laszlo}.

\paragraph{\bf Application}
Now we discuss  an application of the parabolic generalization of Theorem \ref{thm:maintheorem} 
by giving a geometric reconstruction of the Knizhnik-Zamolodchikov (KZ) equation. 
Let us now focus on the genus zero case. Since $\mathbb{P}^1$ has a global coordinate and a 
global meromorphic two form on $\mathbb{P}^1\times \mathbb{P}^1$ with second order poles 
along the diagonal, the WZW/TUY connection gives a flat (honest) connection on the bundle of 
conformal blocks. The equations for the flat sections are known as
\emph{Knizhnik-Zamolodchikov  equations} \cite{KZ}. Thus, the KZ equations constitute a system of first order differential 
equations, arising from the conformal Ward identities,
that determines $n$-point correlation functions in the Wess-Zumino-Witten-Novikov 
model of two dimensional conformal field theory. The KZ equations have remarkable 
realizations in many other areas. For example higher dimensional generalizations of 
hypergeometric functions are known to be solutions of these equations \cite{SVCoh}. The KZ 
equations can also be regarded as quantizations of the isomonodromy problem for 
differential equations of Fuchsian type \cite{Reshetikhin}. The Kohno-Drinfeld 
\cite{Drinfeld, Kohno} theorem relates the monodromy representation of the braid group 
induced by the KZ connection with solutions of the Yang-Baxter equation.

Here, we consider the KZ equations as equations for flat sections of the trivial vector 
bundle $\mathbb{A}_{\vec{\lambda}}$ over the configuration space $X_n$ of $n$-points in 
$\mathbb{C}$ with fibers 
$$A_{\vec{\lambda}}\,:=\,\operatorname{Hom}_{\frak{g}}(V_{\lambda_1}\otimes \cdots \otimes 
V_{\lambda_n},\,\mathbb{C})\, .$$ We restrict the projective heat operator constructed in 
\cite{BMW1} to the open substack $\mathcal{P}ar^c_{G}(\mathbb{P}^1,\bm p,\vec{\lambda})$ of 
quasiparabolic bundles in case of genus zero, where the underlying
principal $G$-bundle is trivial. This turns 
out to be the quotient stack
$$\mathcal{P}ar^c_{G}(\mathbb{P}^1,\bm p,\vec{\lambda})\,=\, 
[ \left(G/P_{\lambda_1}\times
\cdots \times G/P_{\lambda_n}\right)/G], $$ where $P_{\lambda_i}$ are parabolic subgroup determined by $\lambda_i$ 
and the global sections of the homogeneous line bundle $\mathscr{L}_{\vec{\lambda}}$
are just the invariants $A_{\vec{\lambda}}$. Thus, we 
obtain a flat connection on the vector bundle $\mathbb{A}_{\vec{\lambda}}$
over $X_n$. Finally using Theorem \ref{thm:maintheorem}, we identify this
connection with the KZ connection. 
This gives an alternative geometric construction of the KZ equations. We refer the reader to Section \ref{sec:geometrizationofKZ} and Corollary \ref{cor:geoKZ} for more details and precise statements.

The outline of the paper is as follows.
In Sections \ref{sec:conformalblocksbasic} and \ref{sec:connection} we
review in some detail the construction of the WZW/TUY connection in the
parabolic setting.  In Section
\ref{sec:TDOHitchindejong}, we review the construction from \cite{BMW1} of the Hitchin
connection in the parabolic setting. This involves the metaplectic
correction in a central way.
An important step in this section is the re-expression of one of
the ``controlling equations'' of van Geemen-de Jong for the existence of a
projective heat operator on elements of the rational Picard group (see
Section \ref{sec:metaplectic}).  Finally, in Section \ref{sec:parheatoperator} we state the
fundamental result, Theorem \ref{thm:indepdence}, which provides a
simplification of the expression for parabolic Hitchin symbol. 
This 
 is the geometric reflection of the aforementioned  fact that the Sugawara operators 
 do not depend on the highest weights. 
 
Finally, in Section \ref{sec:sugawara}, using this result we directly
relate the symbols of the Sugawara tensor
and the parabolic Hitchin connection, thus proving Theorem \ref{thm:maintheorem} (see
Section \ref{sec:mainproof}). In the last Section \ref{sec:geometrizationofKZ} we
elaborate on the special case of genus zero curves. In Appendix \ref{sec:moduli}, we define the line bundles over
moduli spaces of parabolic $G$-bundles whose sections give rise to the
Friedan-Shenker bundles. We also relate these line bundles to the
determinant of cohomology in the relative setting.

\medskip 
\paragraph{ \bf Acknowledgments.}\, We thank Prakash Belkale for suggesting the question of 
constructing the Knizhnik-Zamolodchikov connection geometrically. S. M. would like to thank 
Arvind Nair for useful discussions.

	\section{Conformal Blocks}\label{sec:conformalblocksbasic}

In this section we recall the basic notions of conformal blocks, following Tsuchiya-Ueno-Yamada \cite{TUY:89}. Let 
$\mathfrak{g}$ be a complex simple Lie algebra and $\mathfrak{h}\, \subset\,\mathfrak{g}$ a Cartan subalgebra. Let $\Delta$ be a
system of 
roots and $\kappa_{\mathfrak{g}}$ the  Cartan-Killing form, normalized so that 
$\kappa_{\mathfrak{g}}(\theta_{\mathfrak{g}},\,\theta_{\mathfrak{g}})\,=\,2$ for the longest root $\theta_{\mathfrak{g}}$.
We let $\nu_{\gfrak} : \hfrak^\ast\isorightarrow\hfrak$
denote the isomorphism induced by $\kappa_\gfrak$.

\subsection{Affine Lie algebras and integrable modules}\label{se2.1}

Let $\xi$ be a formal parameter. The affine Lie algebra
$$\widehat{\mathfrak{g}}\,:=\,\mathfrak{g}\otimes \mathbb{C}((\xi))\oplus
\mathbb{C}\cdot c$$ 
 is a central extension of the loop algebra $\mathbb{C}((\xi))$ by $c$. 
The Lie bracket operation on $\widehat{\mathfrak{g}}$ is given by the formula 
\begin{eqnarray}
[X\otimes f,\, Y\otimes g]&:= [X,\,Y]\otimes fg + \kappa_{\mathfrak{g}}(X,\,Y)
\operatorname{Res}_{t=0}( gdf)\cdot c,
\end{eqnarray}
where $X,\ Y\in \frak{g}$ and $f,\ g$ are elements of $\mathbb{C}((\xi))$. 

We now briefly recall the basic objects in the representation theory of $\widehat{\mathfrak{g}}$.
It is well-known that the finite dimensional $\mathfrak{g}$ modules are parametrized by the subset
$P_+(\mathfrak{g})\,\subset\, \mathfrak{h}^*$ consisting of dominant integral weights. The representation corresponding to a weight $\lambda$ will be denoted by $V_{\lambda}$. 
 Let $\ell\,>\,0$
be a positive integer, and consider the set 
$P_{\ell}(\mathfrak{g})\,:=\,\{ \lambda \,\in\,
P_+(\mathfrak{g})\,\,\mid\,\,
\kappa_{\mathfrak{g}}(\lambda,\theta_{\mathfrak{g}})\,\leq\, \ell \}.$
The highest weight irreducible integrable representations of
$\widehat{\mathfrak{g}}$ at level $\ell$ are classified by the
set $P_{\ell}(\mathfrak{g})$ defined above. The
$\widehat{\mathfrak{g}}$-module corresponding to $\lambda$ will be denoted by
$\mathcal{H}_{\lambda}$.
\subsection{Sheaf of Conformal Blocks}

Integrable representations of affine Lie algebras were used by Tsuchiya-Ueno-Yamada 
\cite{TUY:89} and Tsuchiya-Kanie \cite{TK88} to define conformal blocks. In this paper, we 
will restrict ourselves to conformal blocks associated to smooth curves. Let $\pi\,:\, 
\mathcal{C}\,\to\, S$ be a family of smooth projective curves, and let ${\bm 
p}\,=\,(p_1,\,\cdots,\, p_n)$ be $n$ non-intersecting sections of the map $\pi$ such that 
the complement $\mathcal{C}\backslash \cup_{i=1}^n p_i(S)$ is an affine scheme.

Consider formal coordinates $\xi_1,\,\cdots,\, \xi_n$ around
the sections $\bm {p}\,=\,(p_1,\,\cdots,\, p_n)$ giving
isomorphisms $ \lim_{m}\mathcal{O}_{\mathcal{C}}/\mathcal{I}^m_{p_i}\,\cong\, \mathcal{O}_{S}[[\xi_i]]$, where
$\mathcal{I}_{p_i}$ is the ideal sheaf of the section $p_i$. 
Let $\mathfrak{g}$ be a simple Lie algebra. As before, we
get a sheaf of $\mathcal{O}_{S}$ Lie algebras defined by 
\begin{equation}\label{z1}
\widehat{\mathfrak{g}}_n(S)\,:=\,\mathfrak{g}\otimes \left( \bigoplus_{i=1}^n
\mathcal{O}_{S}((\xi_i))\right)\oplus \mathcal{O}_{S}\cdot c
\end{equation}
The Lie algebra $\widehat{\mathfrak{g}}_n(S)$ in \eqref{z1} contains a natural subsheaf of Lie algebras
$\mathfrak{g}\otimes_{\mathbb{C}}\pi_*\mathcal{O}_{\mathcal{C}}(*D)$, where $D\,=\,\sum_{i=1}^np_i(S)$. That
it satisfies the Lie algebra condition is actually guaranteed by the residue theorem.

For any choice of $n$-tuple of weights $\vec{\lambda}\,=\,(\lambda_1,\,\cdots, \,\lambda_n)$ 
in $P_{\ell}(\mathfrak{g})$, consider the $\widehat{\mathfrak{g}}_n(S)$-module (hence it is 
also a $\mathfrak{g}\otimes_{\mathbb{C}}\pi_*\mathcal{O}_{\mathcal{C}}(*D)$-module)
$$\mathcal{H}_{\vec{\lambda}}(S)\,:=\, \mathcal{H}_{\lambda_1}\otimes
\mathcal{H}_{\lambda_2}\otimes\dots\otimes \mathcal{H}_{\lambda_n}\otimes_{\mathbb{C}} \mathcal{O}_S.$$

\begin{definition}
	The sheaf of covacua $\mathcal{V}_{ \vec{\lambda}}(\frak{g}, \mathcal{C}/S,\, \bm{p},\,\ell)$ at level $\ell$ is defined to be the
largest quotient of $\mathcal{H}_{\vec{\lambda}}(S)$ on which $\mathfrak{g}\otimes_{\mathbb{C}}\pi_*\mathcal{O}_{\mathcal{C}}(*D)$
acts trivially. The sheaf of conformal blocks $\mathcal{V}^{\dagger}_{ \vec{\lambda}}(\frak{g},\mathcal{C}/S, \bm{p},\ell)$ is
defined to be the $\mathcal{O}_S$ dual of the sheaf of covacua.
\end{definition}

Since the above definition uses only the fact that the formal coordinates identify the completed local ring with the Laurent series
ring, the definition of sheaf of covacua and the sheaf of conformal blocks can be
extended in a straightforward manner to families of nodal curves
with Deligne-Mumford stability property.
	
\subsection{Coordinate free construction}

The sheaf $\widehat{\mathfrak{g}}_n$ and its integrable modules can be
defined without the choice of formal coordinates $\bm \xi$;
we recall this  from \cite{Tsuchimoto}. Let $\pi\,:\, \mathcal{C} \,
\to\, S$ be as above.
Consider the sheaf of formal meromorphic functions on $\mathcal{C}$ with poles along the marked sections 
    $$\mathscr{K}_{\mathcal{C}/p_i}\,\,:=\,\,\lim_{a\to +\infty} \lim_{m\to
    +\infty} \mathcal{O}(ap_i(S))/\mathcal{I}^{m}_{p_i}.$$
	Using this, the {\em coordinate free affine Lie algebra} is defined as follows:
\begin{equation}\label{eqn:coordinatefree}
\widehat{\mathfrak{g}}(\mathcal{C}/p_i)\,:=\,\mathfrak{g}\otimes \mathscr{K}_{\mathcal{C}/p_i}
\oplus \mathcal{O}_S\cdot c. 
\end{equation}
Its $n$-pointed analog is defined to be $$\widehat{\mathfrak{g}}_n(\mathcal{C}/S)\,:=\, \left(\mathfrak{g}\otimes
\bigoplus_{i=1}^n\mathscr{K}_{\mathcal{C}/p_i}\right) \oplus \mathcal{O}_S\cdot c.$$
The Lie bracket operation is defined as in the previous section. 

The Verma module $\mathcal{M}_{\lambda}(\mathcal{C}/S)$ (resp.the coordinate free highest 
weight integrable module $\mathbb{H}_{\lambda}(\mathcal{C}/S)$) can be defined similarly by 
inducing representations (and taking quotients) using a parabolic subalgebra 
$$\widehat{\mathfrak{p} 
}_{p_i}\,:=\, \mathfrak{g} \otimes \widehat{\mathcal{O}}_{\mathcal{C}/p_i}(S)\oplus 
\mathcal{O}_S\cdot c$$ on the finite dimensional module $V_{\lambda}$ via evaluation. More precisely $\mathcal{M}_{\lambda_i}(\mathcal{C}/S)=\operatorname{Ind}_{\widehat{\mathfrak{p}}_{p_i}}^{\widehat{\mathfrak{g}}(\mathcal{C}/p_i)}V_{\lambda_i}$
refer the reader to \cite{Tsuchimoto} for more details.

\begin{definition}
The coordinate free sheaf of covacua $\mathbb{V}_{\vec{\lambda}}(\mathfrak{g},\, \mathcal{C}/S, \,\ell)$ is defined to be the sheaf of
coinvariants $\mathbb{H}_{\vec{\lambda}}(\mathcal{C}/S)/ \left(\mathfrak{g}\otimes_{\mathbb{C}}
\pi_*\mathcal{O}_{\mathcal{C}}(*D)\mathbb{H}_{\vec{\lambda}}(\mathcal{C}/S)\right)$. As before, the sheaf of coordinate free conformal
blocks $\mathbb{V}_{\vec{\lambda}}^{\dagger}(\mathfrak{g},\,\mathcal{C}/S,\, \ell)$ is defined to be
the $\mathcal{O}_{S}$-module dual of $\mathbb{V}_{\vec{\lambda}}(\mathfrak{g},\, \mathcal{C}/S,\, \ell)$.
\end{definition}

Observe that a choice of formal coordinates around the marked points
actually induces isomorphisms between $\widehat{\mathfrak{g}}(\mathcal{C}/p_i)$ and the sheaf $\widehat{\mathfrak{g}}_{\xi_i}
\,:=\,\mathfrak{g} \otimes \mathcal{O}_{S}((\xi_i))\oplus \mathcal{O}_S\cdot c$. 
This identifies the coordinate free conformal blocks and the sheaf
of covacua with those obtained by a choice of coordinates. 

We now recall some important properties of the sheaf of conformal blocks. The reader is
referred to \cite{TUY:89}, \cite{Tsuchimoto}, \cite{Fakhruddin:12}, and \cite{Sorger} for proofs and 
further exposition.

\begin{theorem}\label{th1}
Let $\mathfrak{g}$ be a simple Lie algebra and $\ell\,>\,0$ a positive integer. Then the following statements hold: 
	\begin{enumerate}
\item The sheaf of conformal blocks $\mathbb{V}_{\vec{\lambda}}^{\dagger}(\mathfrak{g},\mathcal{C}/S,\ell)$ carries a flat projective
connection and hence it is locally free \cite{Tsuchimoto}.The Verlinde formula, \cite{Faltings:94, Teleman}, computes
the rank of the vector bundle $\mathbb{V}_{\vec{\lambda}}^{\dagger}(\mathfrak{g},\,\ell)$.

\item Let $\overline{\mathcal{M}'}_{g,n}$ be the moduli stack of stable curves along with a choice of 
formal coordinates around the marked points. Then the sheaf $\mathcal{V}_{\vec{\lambda}}^{\dagger}(\mathfrak{g},\, 
\mathcal{C}/S,\,\ell)$ gives a vector bundle with a flat projective connection with logarithmic 
singularities along the boundary.

\item The sheaf of conformal blocks descends to a vector bundle 
$\mathbb{V}_{\vec{\lambda}}^{\dagger}(\mathfrak{g},\ell)$ with a flat projective connection on the moduli stack 
$\mathcal{M}_{g,n}$. Moreover, the vector bundle $\mathbb{V}_{\vec{\lambda}}^{\dagger}(\mathfrak{g},\ell)$ 
extends to a vector bundle over the Deligne-Mumford compactification $\overline{\mathcal{M}}_{g,n}$, and the 
projective connection extends to a projective connection with logarithmic
            singularities along the boundary.

\item The natural forgetful map form $F\,:\,\overline{\mathcal{M}}'_{g,n}\,\to\, 
\overline{\mathcal{M}}_{g,n}$ identifies the vector bundle 
$\mathcal{V}_{\vec{\lambda}}^{\dagger}(\mathfrak{g},\,\ell)$ with the pull-back 
$F^*\mathbb{V}_{\vec{\lambda}}^{\dagger}(\mathfrak{g},\,\ell)$.

\end{enumerate}
\end{theorem}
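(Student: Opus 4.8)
The plan is to establish the four assertions in the order (1), (4), (2), (3), extracting them from the Sugawara construction together with the propagation-of-vacua and factorization formalism of \cite{TUY:89, Tsuchimoto, Fakhruddin:12, Sorger}. The common starting point is coherence of the sheaf of covacua. Because $\mathcal{C}\setminus\bigcup_i p_i(S)$ is affine, the conformal Ward identities allow one to rewrite coinvariants so that only finitely many ``negative modes'' survive; concretely, one checks that the composite $V_{\lambda_1}\otimes\cdots\otimes V_{\lambda_n}\otimes\mathcal{O}_S\to\mathcal{V}_{\vec\lambda}$ is surjective, so $\mathcal{V}_{\vec\lambda}$ is a coherent $\mathcal{O}_S$-module and its dual $\mathcal{V}^\dagger_{\vec\lambda}$ is the sheaf of conformal blocks.

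For (1), I would build the projective connection from the Sugawara energy--momentum tensor. At level $\ell$ its modes $L_m$ act on each $\mathcal{H}_\lambda$ and satisfy the Virasoro relations with central charge $c=\ell\dim\mathfrak{g}/(\ell+h^\vee)$, where $h^\vee$ is the dual Coxeter number. Identifying the Kodaira--Spencer deformations of $(\mathcal{C},\bm p,\bm\xi)$ with vector fields on the formal punctured disks modulo fields extending over $\mathcal{C}\setminus\bm p$, the assignment of $\sum_m a_m L_{-m}$ to such a field defines an operator on $\mathcal{H}_{\vec\lambda}(S)$; the ``energy--momentum'' Ward identity (vector fields regular on $\mathcal{C}\setminus\bm p$ annihilate covacua up to the central term) shows it descends to $\mathcal{V}_{\vec\lambda}$. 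This produces the TUY connection, which is projective precisely because the central charge contributes a scalar. Its curvature is computed from the Virasoro commutators and is central, hence the connection is projectively flat. Finally, a coherent sheaf carrying a flat (projective) connection over a smooth base is locally free, giving the vector bundle $\mathbb{V}^\dagger_{\vec\lambda}(\mathfrak{g},\ell)$; its rank is then computed by the Verlinde formula \cite{Faltings:94, Teleman}, which I would treat as an external input.

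Statement (4) is essentially tautological in the coordinate-free setup: a choice of $\bm\xi$ furnishes the isomorphisms $\widehat{\mathfrak{g}}(\mathcal{C}/p_i)\cong\widehat{\mathfrak{g}}_{\xi_i}$ recorded just above the theorem, hence compatible isomorphisms of induced modules and of the resulting (co)vacua, that is $\mathcal{V}^\dagger_{\vec\lambda}\cong F^*\mathbb{V}^\dagger_{\vec\lambda}$, natural in the family. For (2) and (3) the engine is factorization. Near a boundary divisor parametrizing a nodal curve $\mathcal{C}_0$ with normalization $\widetilde{\mathcal{C}}$ glued at $p',p''$, one proves
\[
\mathbb{V}^\dagger_{\vec\lambda}(\mathcal{C}_0)\,\cong\,\bigoplus_{\mu\in P_\ell(\mathfrak{g})}\mathbb{V}^\dagger_{\vec\lambda,\mu,\mu^\ast}(\widetilde{\mathcal{C}}),
\]
with $\mu^\ast$ the dual weight. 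Combined with the sewing construction in the smoothing parameter $q$, this yields local constancy of the rank across the boundary (hence local freeness on $\overline{\mathcal{M}}'_{g,n}$) and shows that the connection acquires a $dq/q$ term, the $L_0$-grading forcing flat sections of the form $q^{\Delta_\mu}(\cdots)$ with $\Delta_\mu$ the conformal weight; this is the logarithmic singularity. For the descent (3), the forgetful map $F$ is a torsor under $\prod_i\Aut\mathbb{C}[[\xi_i]]$, and the nonnegative Virasoro modes $\{L_m\}_{m\ge 0}$ integrate to an action of this group for which the covacua are equivariant; projectively, the $L_0$-twist by the tangent line at each $p_i$ is invisible, so $\mathbb{P}\mathbb{V}^\dagger_{\vec\lambda}$ and its projective connection descend, extending over $\overline{\mathcal{M}}_{g,n}$ by (2).

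The main obstacle is the factorization theorem underpinning (2)--(3): proving the sewing isomorphism and controlling the degeneration of the Sugawara connection so as to extract exactly a first-order pole in $dq/q$ requires the full propagation-of-vacua machinery and a delicate analysis of the $q$-adic convergence of the sewn states. By comparison, the projective flatness in (1) is a formal consequence of the Virasoro relations, and (4) is immediate.
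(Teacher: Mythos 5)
The paper does not actually prove Theorem \ref{th1}: it is stated as a recollection, with the proofs explicitly deferred to \cite{TUY:89}, \cite{Tsuchimoto}, \cite{Fakhruddin:12} and \cite{Sorger}. Your outline reconstructs precisely the route those references take --- coherence of covacua via the Ward identities and the surjection from $V_{\lambda_1}\otimes\cdots\otimes V_{\lambda_n}\otimes\mathcal{O}_S$, the Sugawara/Virasoro construction of the projectively flat connection (which the paper itself recalls in Section \ref{sec:connection}), factorization and sewing for the logarithmic extension over the boundary, and the $\operatorname{Aut}\mathbb{C}[[\xi]]$-torsor argument for descent --- so there is no divergence of method to report.

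One point where your sketch falls short of the statement as written: part (3) asserts that the \emph{vector bundle} $\mathbb{V}^{\dagger}_{\vec{\lambda}}(\mathfrak{g},\ell)$ descends to $\mathcal{M}_{g,n}$, whereas you only argue that the projectivization descends. The action of $\operatorname{Der}_0\mathcal{O}$ on $\mathcal{H}_{\lambda}$ does not integrate to $\operatorname{Aut}\mathcal{O}$ directly, because the $L_0$-eigenvalues lie in $\Delta_{\lambda}(\mathfrak{g},\ell)+\mathbb{Z}$ rather than $\mathbb{Z}$; the fix in \cite{Tsuchimoto} (and in the Frenkel--Ben-Zvi formalism) is to twist each module to $\mathcal{H}_{\lambda}\otimes\mathbb{C}\,dt^{\Delta_{\lambda}(\mathfrak{g},\ell)}$, after which the eigenvalues become integral, the group action exists, and the honest vector bundle descends; only the connection remains merely projective. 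Your phrase about the ``$L_0$-twist by the tangent line'' gestures at this, but as stated it proves descent of $\mathbb{P}\mathbb{V}^{\dagger}_{\vec{\lambda}}$ rather than of $\mathbb{V}^{\dagger}_{\vec{\lambda}}$ itself, which is the weaker claim.
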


We will refer to the connections in Theorem \ref{th1} as the {\em TUY/WZW} connections;
their construction is recalled in Section \ref{sec:connection}.

\begin{remark}
There are several other important properties --- e.g. ``propagation of vacua"
and ``factorization theorems" ---
exhibited by conformal blocks. We refer the reader to \cite{TUY:89} for more details.
	
We also point out that  $\mathcal{V}_{\vec{\lambda}}^{\dagger}(\mathfrak{g},\,\ell)$ and 
$\mathbb{V}_{\vec{\lambda}}^{\dagger}(\mathfrak{g},\,\ell)$ do not agree as twisted $\mathcal{D}$-modules.
This issue does not appear in \cite{Laszlo}, as the weights there are zero.
We refer 
the reader to \cite{Tsuchimoto} for the computation of the Atiyah algebra of 
$\mathbb{V}_{\vec{\lambda}}^{\dagger}(\mathfrak{g},\,\ell)$. 
\end{remark}
	
\section{Energy momentum tensor and the Sugawara construction}\label{sec:connection}

In this section, following the discussion in \cite{TUY:89} we recall the definition of the Sugawara tensor, which will be used in defining the 
connections on the sheaf of covacua and conformal blocks.

For any $X\,\in\, \mathfrak{g}$, the element $X\otimes \xi^m\,\in\,
\widehat\gfrak$ will be denoted by $X(m)$.
The energy momentum tensor $T(z)$ at level $\ell$ is defined by the formula 
\begin{equation}\label{e1}
T(z)\,\,=\,\,\frac{1}{2(\ell+h^{\vee}(\mathfrak{g}))}\sum_{a=1}^{\dim \mathfrak{g}} \norder {J^a(z)J^a(z)}\, , 
\end{equation}
where $\norder{ \ }$ is the normal ordering (cf.\ \cite[p.\ 467]{TUY:89}),
$h^{\vee}(\mathfrak{g})$ is the dual Coxeter number of $\gfrak$,
and $\{J^1,\,\cdots,\, J^{\dim \mathfrak{g}}\}$ is an orthonormal
basis of $\mathfrak{g}$ with respect to the normalized Cartan-Killing form. Also, define
$$X(z)\,\,:=\,\,\sum_{n\in \mathbb{Z}} X(n)z^{-n-1}$$
for any element $X \,\in\, \mathfrak{g}$. 
The $n$-th Virasoro operator $L_n$ is defined by the formula 
(see \cite{KacWakimoto:88}):
\begin{equation}\label{e2}
L_n\,\,:=\,\,\frac{1}{2(\ell+h^{\vee}(\mathfrak{g}))}\sum_{m\in \mathbb{Z}}
\sum_{a=1}^{\dim \mathfrak{g}} \norder {J^a(m)J^a(n-m)}\, .
\end{equation}
The operators $L_n$ act on the module  $\mathcal{H}_{\lambda}$ defined in
Section \ref{se2.1}.

For $X \,\in\, \mathfrak{g}$, $f(z)\,\in\, \mathbb{C}((z))$ and $\underline{\ell}\,=\,\ell(z)\frac{d}{dz}$,
define (as in \cite{TUY:89})
\begin{eqnarray}
X[f]\,:= \,\operatorname{Res}_{z=0}\left( X(z)f(z)\right)dz,\nonumber \ \ \
T[\underline{\ell}]\,:=\,\operatorname{Res}_{z=0}\left(T(z)\ell(z)\right)dz. \label{tl}
\end{eqnarray}

\subsection{Construction of the WZW/TUY connection}\label{se3.1}

Let $C$ be an irreducible smooth projective curve with $n$-marked points
${\bm p}\,=\,(p_1,\,\cdots,\,p_n)$.
For every $1\, \leq\, i\,\leq\, n$, we choose a formal coordinate
$\xi_i$ around the marked point $p_i$ on the curve $C$. 
Following 
\cite{TUY:89}, let us briefly recall the construction of a flat projective connection.

Let $\pi\,:\, \mathcal{C}\,\to\, S$ be a versal 
family of $n$-pointed stable curves equipped with $n$ non-intersecting sections $p_i\,:\, S
\,\to\, \mathcal{C}$. Let $D\,=\,\sum_{i=1}^n p_i(S)$ be the
corresponding divisor
on $\mathcal{C}$. We have a short exact sequence of sheaves 
\begin{equation}\label{eqn:KS1}
0\,\longrightarrow\,\pi_{*}\mathcal{T}_{\mathcal{C}/S}(*D)\,\longrightarrow\,\pi_* \mathcal{T}_{\mathcal{C},\pi}(*D)
\,\longrightarrow\, \mathcal{T}_S\,\longrightarrow\, 0
\end{equation}
on $S$. On the other hand, we also have the short exact sequence
\begin{equation}\label{eqn:KS2}
0\,\longrightarrow\, \pi_* \mathcal{T}_{\mathcal{C}/S}(*D)\,\longrightarrow\,
\bigoplus_{i=1}^n \mathcal{O}_S[\xi_i^{-1}]\frac{d}{d\xi_i}\,\longrightarrow\,
R^1\pi_* \mathcal{T}_{\mathcal{C}/S}(-D)\,\longrightarrow\, 0.
\end{equation}
 obtained from pushing forward the following short exact sequence
 \begin{equation}
 	0\, \longrightarrow\, \mathcal{T}_{\mathcal{C}/S}(-D) \longrightarrow \mathcal{T}_{\mathcal{C}/S}(*D) \longrightarrow \bigoplus_{i=1}^n \mathcal{O}_S[\xi_i^{-1}]\frac{d}{d\xi_i }\longrightarrow 0.
 \end{equation}

Since the family of $n$-pointed curves $\mathcal{C}$ is versal, we have the Kodaira-Spencer isomorphism
$\mathcal{T}_{S} \,  \isorightarrow\, R^1\pi_*\mathcal{T}_{\mathcal{C}/S}(-D)$.
Combining \eqref{eqn:KS1} and \eqref{eqn:KS2}, the following commutative diagram of
homomorphisms is obtained:
	\begin{equation}
	\begin{tikzcd}
	0 \arrow[r]& \pi_{*} \mathcal{T}_{\mathcal{C}/S}(*D)\arrow[r] \arrow[d,equal]& \pi_*\mathcal{T}_{\mathcal{C},\pi}(*D) \arrow[r] \arrow[d,"{\bf t}"]& \mathcal{T}_S \arrow[r]\arrow[d,"KS"]& 0\\
0 \arrow[r] &	\pi_* \mathcal{T}_{\mathcal{C}/S}(*D) \arrow[r] & \bigoplus_{i=1}^n\mathcal{O}_{S}[\xi_{i}^{-1}]\frac{d}{d\xi_i} \arrow[r] \arrow[r,"\theta"] & R^1\pi_* \mathcal{T}_{\mathcal{C}/S}(-D) \arrow[r] &0,
	\end{tikzcd}
	\end{equation}
where ${\bf t}$ is the  projection to the polar part of the
Laurent expansion of sections in terms of the given coordinates $\xi_i$ around the divisors
$p_i(S)$. This map ${\bf t}$ is an isomorphism because the family $\mathcal C$ is versal. 

Let $\vec{\ell}\,=\,(l_1,\,\cdots, \,l_n)$ and $\vec{m}\,=\,(m_1,\,\cdots,\, m_n)$ be two
formal vector fields; both $l_i$ and $m_i$ are defined on a formal neighborhood of
$p_i(S)$.
The Lie bracket $[\vec{\ell},\, \vec{m}]_d$  is given by the formula 
\begin{equation}\label{eqn:liebracketformula}
[\vec{\ell},\, \vec{m}]_d\,:=\,[\vec{\ell},\, \vec{m}]_0+ \theta(\vec{\ell})(\vec{m})-\theta(\vec{m})(\vec{\ell}),
\end{equation}
where $[ \ ,\ ]_0$ is the usual Lie bracket of formal vector fields and $\theta(\vec{\ell})$
acts componentwise using the formal
parameters $\xi_i$. Now for any formal vector field $\vec{\ell}$, define the operator $\mathscr{D}(\vec{\ell})$
on $\mathcal{H}_{\vec{\lambda}}$ by the formula 
\begin{equation}\label{dl}
\mathscr{D}(\vec{\ell})(F\otimes | \Phi\rangle )\,\,
    :=\,\,\theta(\vec{\ell})(F)\otimes |\Phi\rangle -F \cdot\left(\sum_{j=1}^n\rho_j(T[\underline{l}_j])\right)|\Phi\rangle,
	\end{equation}
    where $\rho_j$ is the action on $\mathcal{H}_{\lambda_j}$ defined on
    \cite[p.\ 475]{TUY:89}.

\subsection{WZW/TUY connection}

After possibly shrinking $S$, we can find a symmetric bidifferential $\omega$ on
$\mathcal{C}\times_S \mathcal{C}$ with a pole of order two on the diagonal such
that the biresidue is $1$. For any formal vector field ${\vec{\ell}}$ define
$$
a_{\omega}(\vec{\ell})\,:=\,
-\frac{c_{v}}{12}\sum_{i=1}^n \operatorname{Res}_{\xi_i=0}\left( \ell_i(\xi_i)S_{\omega}(\xi_i)d\xi_i\right),
$$
where $S_{\omega}$ is the
projective connection associated to $\omega$,
and  $\displaystyle c_{v}\,=\,\frac{\ell\dim \mathfrak{g}}{\ell+h^{\vee}(\mathfrak{g})}$ is the central charge. 

Now let $\tau$ be a vector field on $S$. Take a lift of $\tau$ to $\pi_{*} \mathcal{T}_{\mathcal{C}/S}(*D)$ and denote it by
$\vec{\ell}$. With the choice of a bidifferential $\omega$ as above, we define the following operator on the sheaf
of conformal blocks: 
\begin{equation}\label{k1}
\nabla_{\tau}^{(\omega)}(\langle \Psi| )\,:=\,\mathscr{D}(\vec{\ell})(\langle \Psi| ) + a_{\omega}(\vec{\ell})(\langle \Psi|).
\end{equation}
We recall the following result (see \cite[Thm.\ 5.3.3]{TUY:89}).

\begin{proposition}
The operator $\nabla^{(\omega)}_{\tau}$ in \eqref{k1} defines a flat projective connection on the sheaf of
conformal blocks $\mathcal{V}_{\vec{\lambda}}^{\dagger}(\mathfrak{g},\,\ell)$. 
\end{proposition}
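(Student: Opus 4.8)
The plan is to follow Tsuchiya-Ueno-Yamada \cite{TUY:89}, reducing the required properties---well-definedness, the Leibniz (connection) property, and projective flatness---to two commutation relations in the representation theory of $\widehat{\mathfrak{g}}$ that follow from the normal-ordering definitions \eqref{e1}--\eqref{e2} (see also \cite{KacWakimoto:88}): the action of the Sugawara operators on currents, $[T[\underline{\ell}],\,X[f]] = -X[\underline{\ell}(f)]$, and the Virasoro bracket $[T[\underline{\ell}],\,T[\underline{m}]] = -T[[\underline{\ell},\,\underline{m}]] + \frac{c_v}{12}\operatorname{Res}_{z=0}(\ell''' m\,dz)$. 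First I would check that $\nabla_\tau^{(\omega)}$ descends to an operator on conformal blocks. A global current $X\otimes f$, with $f$ a section of $\pi_*\mathcal{O}_{\mathcal{C}}(*D)$, acts on $\mathcal{H}_{\vec\lambda}$ as $\sum_j\rho_j(X[f_j])$ and annihilates every conformal block $\langle\Psi|$; computing the commutator with the Sugawara part of $\mathscr{D}(\vec\ell)$ via the first relation yields $-\sum_j\rho_j(X[\underline{l}_j(f_j)])$, and since the $\underline{l}_j(f_j)$ are the local expansions of $\vec\ell(f)$, again a global section of $\mathcal{O}_{\mathcal{C}}(*D)$, this commutator is once more the action of a global current. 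Hence $\mathscr{D}(\vec\ell)$ preserves the gauge submodule and acts on covacua, with the central contributions cancelling by the residue theorem; the scalar $a_\omega(\vec\ell)$ preserves conformal blocks trivially since it acts by multiplication. The Leibniz rule over $\mathcal{O}_S$ is then immediate from the term $\theta(\vec\ell)(F)$ in \eqref{dl}, which differentiates the $\mathcal{O}_S$-coefficients along the Kodaira-Spencer image of $\tau$.

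Next I would isolate the choices that make the connection only projective. Changing the lift $\vec\ell$ of $\tau$ by a vertical field $\vec{v}\in\pi_*\mathcal{T}_{\mathcal{C}/S}(*D)$, or changing the formal coordinates $\xi_i$, alters $\mathscr{D}(\vec\ell)$; the claim is that after adding $a_\omega$ the total change is multiplication by a scalar (a holomorphic $1$-form on $S$). The operator part of $\mathscr{D}(\vec{v})$ vanishes on covacua by the argument above, up to the conformal anomaly, and the Schwarzian correction $a_\omega(\vec\ell) = -\frac{c_v}{12}\sum_i\operatorname{Res}_{\xi_i=0}(\ell_i S_\omega(\xi_i)\,d\xi_i)$ is built precisely so that its change under a coordinate transformation reproduces the Virasoro cocycle; thus the ambiguity is scalar, and two admissible bidifferentials $\omega$ give the same projective connection. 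This is where the hypotheses on $\omega$---second-order pole on the diagonal and biresidue $1$---enter, guaranteeing that $S_\omega$ is a genuine projective connection with the correct transformation law.

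Finally, for projective flatness I would compute the curvature $[\nabla_\tau^{(\omega)},\,\nabla_{\tau'}^{(\omega)}] - \nabla_{[\tau,\tau']}^{(\omega)}$. On the operator part the second commutation relation gives $-T$ of the bracket, which matches $\mathscr{D}$ of the modified bracket $[\vec\ell,\vec m]_d$ of \eqref{eqn:liebracketformula}: the extra terms $\theta(\vec\ell)(\vec m)-\theta(\vec m)(\vec\ell)$ are produced exactly by commuting the two $\theta$-pieces of the $\mathscr{D}$'s against the Sugawara pieces, and $[\vec\ell,\vec m]_d$ maps to $[\tau,\tau']$ under Kodaira-Spencer. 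The leftover central term $\frac{c_v}{12}\operatorname{Res}(\ell''' m\,dz)$ combines with the variation of $a_\omega(\vec\ell)$ in the $\tau'$-direction to yield a pure scalar, so that the curvature acts by multiplication by a function on $S$ and the connection is projectively flat. I expect the main obstacle to be precisely this last anomaly bookkeeping: one must verify that the scalar coming from the Virasoro cocycle is exactly the one produced by differentiating the Schwarzian term $a_\omega$ and by the coordinate-dependence of $S_\omega$, so that the two contributions assemble into a single scalar rather than an honest operator. Tracking the transformation laws of $S_\omega$ and the residues carefully is the technical heart of the argument, everything else being a formal consequence of the two commutation relations.
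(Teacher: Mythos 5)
Your proposal is correct and follows essentially the same route as the paper, which does not prove this proposition itself but recalls it from Tsuchiya--Ueno--Yamada \cite[Thm.\ 5.3.3]{TUY:89}: the two commutation relations $[T[\underline{\ell}],X[f]]=-X[\underline{\ell}(f)]$ and the Virasoro cocycle identity are exactly the inputs used there to show that $\mathscr{D}(\vec{\ell})$ descends to covacua, that the dependence on the lift $\vec{\ell}$ and on the coordinates $\bm{\xi}$ is scalar after adding the Schwarzian term $a_{\omega}$, and that the curvature reduces to the central anomaly, which is cancelled by the variation of $a_{\omega}$. The anomaly bookkeeping you flag as the technical heart is indeed where the work lies in \cite{TUY:89}, and your outline of it is accurate.
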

\begin{remark}
Consider the natural forgetful map $F\,:\, \overline{\mathcal{M}}_{g,n}'\,\to \,
\overline{\mathcal{M}}_{g,n}$ constructed
by forgetting the choice of formal parameters at the $n$-marked points.
Then the natural identification between $F^*\mathbb{V}_{\vec{\lambda}}(\mathfrak{g},\,\ell)$ and
$\mathcal{V}_{\vec{\lambda}}(\mathfrak{g},\,\ell)$ as locally free sheaves intertwines, up to a first order operator,
the pull-back of the coordinate free TUY connection on $F^*\mathbb{V}_{\vec{\lambda}}(\mathfrak{g},\ell)$
and the TUY connection on $\mathcal{V}_{\vec{\lambda}}(\mathfrak{g},\,\ell)$. 
\end{remark}

\section{Projective connections via heat operators}\label{sec:TDOHitchindejong}
\subsection{Heat operators and the Hitchin-van Geemen-de Jong equation}

Let $\pi\,:\, M \,\to\, S$ be a smooth map of smooth varieties, and let $\mathcal{L}$ be a line bundle on $M$. 
The Kodaira-Spencer infinitesimal deformation map is given by:
$$KS_{M/S}: \mathcal{T}_{S}\lra R^1\pi_{*} \mathcal{T}_{M/S}.$$
On the other hand, we have  the
coboundary map $$\mu_{\mathcal{L}}: \pi_*\operatorname{Sym}^2\mathcal{T}_{M/S}
\longrightarrow R^1\pi_* \Tcal_{M/S},$$
occurring in the long exact sequence obtained from the push forward $\pi_{*}$ of the fundamental
short exact sequence of differential operators
\begin{equation*}
0 \lra \mathcal{T}_{M/S} \cong \mathcal{D}_{M/S}^{\leq
	1}(\mathcal{L})/\mathcal{O}_M\lra \mathcal{D}_{M/S}^{\leq
	2}(\mathcal{L})/\mathcal{O}_{M} \stackrel{s_2}{\lra} \operatorname{Sym}^2
\mathcal{T}_{M/S}\lra 0\ ,
\end{equation*}where $s_2$ is the symbol map and $\mathcal{D}^{\leq i}_{M/S}(\mathcal{L})$ is the sheaf of relative differential operators of order atmost $i$. Following \cite{VanGeemenDeJong:98}, consider the sheaf $\mathcal{W}(\mathcal{L})$ defined by
\begin{equation}
\mathcal{W}(\mathcal{L})\,:= \,\mathcal{D}_{M}^{\leq 1}(\mathcal{L})+ \mathcal{D}_{M/S}^{\leq 2}(\mathcal{L}).
\end{equation}
There is a natural short exact sequence 
\begin{equation}\label{q0}
0\,\lra\, \mathcal{D}_{M/S}^{\leq 1}(\mathcal{L})\,\lra\, \mathcal{W}(\mathcal{L})\,
\stackrel{q_0}{\longrightarrow}\,
\pi^* \mathcal{T}_S \oplus \operatorname{Sym}^2 \mathcal{T}_{M/S}\,\lra\, 0.
\end{equation}

\begin{definition}\label{def:heatoperator}
A {\em heat operator} $\mathbb{D}\,:\, \pi^* \mathcal{T}_{S}\,\to\, \mathcal{W}({\mathcal{L}})$ is a section of the natural
projection map $\mathcal{W}(\mathcal{L}) \,\to\, \pi^*\mathcal{T}_S$. A {\em projective heat operator}
is a section of $\mathcal{W}(\mathcal{L})/\mathcal{O}_M\,\to\, \pi^*\mathcal{T}_S$.
\end{definition}

A projective heat operator evidently lifts, locally, to a heat operator. 
Given a homomorphism 
$\rho\,:\, \pi^*\mathcal{T}_S\,\to\,\operatorname{Sym}^2 \mathcal{T}_{M/S}$, 
one can ask whether there is a canonical {\em projective heat operator} $\mathbb{D}\, :\, 
\pi^*\mathcal{T}_S\,\to\, \mathcal{W}(\mathcal{L})/\mathcal{O}_M$ such that 
$q_1\circ \mathbb{D}\,=\, \rho$. The following theorems of Hitchin \cite{Hitchin:90} and 
van Geemen-de Jong \cite{VanGeemenDeJong:98}, answer a fundamental question on existence of projective heat operators. 

\begin{theorem}[{\cite{Hitchin:90, VanGeemenDeJong:98}}]\label{thm:existenceofheat}
	Assume that the following conditions hold: 
	\begin{itemize}
		\item $KS_{M/S} + \mu_{\mathcal{L}}\circ \rho\,=\,0$;
		
		\item $\cup [\mathcal{L}] \,:\,
            \pi_*\mathcal{T}_{M/S} \,\to\,
		R^1\pi_*\mathcal{O}_{M}$ is an isomorphism; 
		
		\item $\pi_*\mathcal{O}_M\,=\,\mathcal{O}_S$.
	\end{itemize}
	Then there exists a unique projective heat operator $\overline{\mathbb{D}}$ lifting
any candidate symbol
$\rho\,:\, \pi^*\mathcal{T}_S \,\to\, \operatorname{Sym}^2\mathcal{T}_{M/S}$. Moreover the coherent sheaf $\pi_*\mathcal{L}$ carries a projective connection.
\end{theorem}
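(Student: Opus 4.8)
The plan is to recast the existence of a projective heat operator with prescribed symbol $\rho$ as a lifting problem for the surjection $q_0$ of \eqref{q0}, and then to control both the obstruction and the ambiguity by pushing forward to $S$. Quotienting \eqref{q0} by $\mathcal{O}_M$ and using $\mathcal{D}_{M/S}^{\leq 1}(\mathcal{L})/\mathcal{O}_M\,\cong\,\mathcal{T}_{M/S}$ gives
\begin{equation*}
0\,\lra\, \mathcal{T}_{M/S}\,\lra\, \mathcal{W}(\mathcal{L})/\mathcal{O}_M\, \stackrel{q_0}{\lra}\, \pi^*\mathcal{T}_S\oplus\operatorname{Sym}^2\mathcal{T}_{M/S}\,\lra\,0.
\end{equation*}
Finding $\overline{\mathbb{D}}$ with $q_1\circ\overline{\mathbb{D}}=\rho$ is exactly lifting $(\operatorname{id},\rho)\,:\,\pi^*\mathcal{T}_S\to\pi^*\mathcal{T}_S\oplus\operatorname{Sym}^2\mathcal{T}_{M/S}$ through $q_0$. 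Applying $\pi_{*}$ (with $\pi_*\pi^*\mathcal{T}_S=\mathcal{T}_S$, from $\pi_*\mathcal{O}_M=\mathcal{O}_S$), I would identify the connecting map $\delta\,:\,\mathcal{T}_S\oplus\pi_*\operatorname{Sym}^2\mathcal{T}_{M/S}\to R^1\pi_*\mathcal{T}_{M/S}$ with $(KS_{M/S},\mu_{\mathcal{L}})$: on the first summand $\delta$ is the connecting map of the relative tangent sequence $0\to\mathcal{T}_{M/S}\to\mathcal{D}^{\leq 1}_M(\mathcal{L})/\mathcal{O}_M\to\pi^*\mathcal{T}_S\to 0$, which is $KS_{M/S}$, and on the second it is $\mu_{\mathcal{L}}$ by definition. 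Hence the adjoint section $(\operatorname{id},\tilde\rho)$ lands in $\ker\delta=\operatorname{im}(\pi_*q_0)$ exactly when $KS_{M/S}+\mu_{\mathcal{L}}\circ\rho=0$, the first hypothesis.

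For existence I would argue locally on $S$. Over an affine open $U$ on which $\mathcal{T}_S$ is free, the first hypothesis places $(\operatorname{id},\tilde\rho)|_U$ in the image of $\pi_*q_0$, and since $\mathcal{T}_S$ is locally free and $H^1$ of quasi-coherent sheaves vanishes on $U$, the morphism lifts to a local projective heat operator with symbol $\rho$. This yields an open cover of $S$ equipped with such operators; the remaining issue is to glue them, which is governed by uniqueness.

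The uniqueness step is the crux, and the main obstacle. Two projective heat operators with the same symbol differ by an element of $\operatorname{Hom}_{\mathcal{O}_S}(\mathcal{T}_S,\pi_*\mathcal{T}_{M/S})$, which is in general nonzero, so uniqueness is not formal. To collapse this ambiguity I would lift each projective operator locally to an honest heat operator $\mathbb{D}\,:\,\pi^*\mathcal{T}_S\to\mathcal{W}(\mathcal{L})$ (possible over affines, since the kernel of $\mathcal{W}(\mathcal{L})\to\mathcal{W}(\mathcal{L})/\mathcal{O}_M$ is $\mathcal{O}_M$ and $H^1(U,\mathcal{O}_S)=0$). Two such honest lifts with the same symbol differ by a section of $\operatorname{Hom}_{\mathcal{O}_S}(\mathcal{T}_S,\pi_*\mathcal{D}_{M/S}^{\leq 1}(\mathcal{L}))$, since $\ker q_0=\mathcal{D}_{M/S}^{\leq 1}(\mathcal{L})$. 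Here the second and third hypotheses enter: from the relative Atiyah sequence $0\to\mathcal{O}_M\to\mathcal{D}_{M/S}^{\leq 1}(\mathcal{L})\to\mathcal{T}_{M/S}\to 0$, injectivity of $\cup[\mathcal{L}]$ together with $\pi_*\mathcal{O}_M=\mathcal{O}_S$ forces $\pi_*\mathcal{D}_{M/S}^{\leq 1}(\mathcal{L})=\mathcal{O}_S$ and shows that the induced map $\pi_*\mathcal{D}_{M/S}^{\leq 1}(\mathcal{L})\to\pi_*\mathcal{T}_{M/S}$ vanishes, its image being $\ker(\cup[\mathcal{L}])=0$. Thus the two honest lifts differ by a scalar-valued map in $\operatorname{Hom}(\mathcal{T}_S,\mathcal{O}_S)$, whose image in $\operatorname{Hom}(\mathcal{T}_S,\pi_*\mathcal{T}_{M/S})$ is zero; the two projective operators therefore agree over each $U$, hence globally. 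This proves uniqueness and simultaneously glues the local operators of the existence step into a global $\overline{\mathbb{D}}$.

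Finally, for the projective connection on $\pi_*\mathcal{L}$, I would observe that for a local honest lift $\mathbb{D}$ and a vector field $\tau$, the operator $\mathbb{D}(\tau)\in\mathcal{W}(\mathcal{L})$ is a differential operator on $\mathcal{L}$ of order $\leq 1$ in the $S$-directions with horizontal symbol $\tau$ and order $\leq 2$ along the fibers; pushing forward, $\nabla_\tau:=\pi_*\mathbb{D}(\tau)$ obeys the Leibniz rule over $\mathcal{T}_S$ and so is a connection on $\pi_*\mathcal{L}$. Distinct local lifts differ by $\mathcal{O}_M$-multiples, i.e.\ by $\mathcal{O}_S$-scalars after pushforward (again using $\pi_*\mathcal{O}_M=\mathcal{O}_S$), so the local connections agree up to scalar endomorphisms and patch to a well-defined projective connection.
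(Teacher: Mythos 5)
This theorem is quoted from \cite{Hitchin:90, VanGeemenDeJong:98} and not proved in the paper, so I am comparing your argument with the standard one in those references. Your existence step is that argument and is correct: quotienting \eqref{q0} by $\mathcal{O}_M$, pushing forward, and identifying the connecting homomorphism on the two summands of $\pi^*\mathcal{T}_S\oplus\operatorname{Sym}^2\mathcal{T}_{M/S}$ with $KS_{M/S}$ and $\mu_{\mathcal{L}}$ is exactly how the first hypothesis enters, and the local lifting over affine $U\subset S$ is unobstructed because the relevant obstruction group is an $H^1$ of a quasi-coherent sheaf on an affine scheme.

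The uniqueness step has a genuine gap, precisely at the point you flag as the crux. Two sections of $\mathcal{W}(\mathcal{L})/\mathcal{O}_M\to\pi^*\mathcal{T}_S$ with the same symbol differ by an arbitrary element of $\operatorname{Hom}_{\mathcal{O}_S}(\mathcal{T}_S,\pi_*\mathcal{T}_{M/S})$, and conversely adding any such element to $\overline{\mathbb{D}}$ yields another projective heat operator with symbol $\rho$; so uniqueness in the sense of Definition \ref{def:heatoperator} is \emph{equivalent} to the vanishing of this Hom group, and no argument can evade that. Your detour through honest lifts does not close the gap, for two reasons. First, lifting a projective heat operator to an honest one over $\pi^{-1}(U)$ is obstructed by a class in $\operatorname{Ext}^1_{\mathcal{O}_{\pi^{-1}(U)}}(\pi^*\mathcal{T}_S,\mathcal{O}_M)\cong H^0(U,\Omega^1_S\otimes R^1\pi_*\mathcal{O}_M)$ (for $U$ affine, by Leray and the projection formula), not by $H^1(U,\mathcal{O}_S)$; under hypothesis (2) this group is $H^0(U,\Omega^1_S\otimes\pi_*\mathcal{T}_{M/S})$, which you are not assuming to vanish. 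Second, and decisively, your own computation that $\pi_*\mathcal{D}^{\leq 1}_{M/S}(\mathcal{L})=\mathcal{O}_S$ maps to zero in $\pi_*\mathcal{T}_{M/S}$ shows that the perturbed operator $\overline{\mathbb{D}}+v$ with $0\neq v\in\operatorname{Hom}(\mathcal{T}_S,\pi_*\mathcal{T}_{M/S})$ admits no honest lift even locally on $S$, so it escapes your comparison entirely while remaining a bona fide projective heat operator with symbol $\rho$; the same ambiguity obstructs the gluing in your existence step. The statement is correct as the references prove it when $\pi_*\mathcal{T}_{M/S}=0$ (equivalently, given hypothesis (2), when $R^1\pi_*\mathcal{O}_M=0$, which is what holds for the moduli spaces in this paper): then the ambiguity sheaf vanishes outright, uniqueness and gluing are immediate, and the honest-lift detour is unnecessary. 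Alternatively, formulate uniqueness for a section of $\pi_*\mathcal{W}(\mathcal{L})/\mathcal{O}_S\to\mathcal{T}_S$, where your observation $\pi_*\mathcal{D}^{\leq 1}_{M/S}(\mathcal{L})=\mathcal{O}_S$ really does kill the ambiguity. Your final paragraph on the induced projective connection is fine once local honest lifts exist, i.e.\ once $R^1\pi_*\mathcal{O}_M=0$.
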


\begin{remark}\label{rem:extntofrac}
We can take $\mathcal{L}$ to be an object in the rational Picard group $\operatorname{Pic}(M)\otimes \mathbb{Q}$. 
All the sheaves that appear in the statement of 
    Theorem \ref{thm:existenceofheat} are well-defined, and the proof 
     in \cite{VanGeemenDeJong:98} does not
require the assumption that $\mathcal{L}$ be a line bundle.
\end{remark}

\subsection{Heat operators and metaplectic quantization}
\label{sec:metaplectic}

We are interested in the case where the Kodaira-Spencer map $KS_{M/S}$, the candidate symbol $\rho\,:\,
\mathcal{T}_S\,\to\, 
\pi_*\operatorname{Sym}^2 \mathcal{T}_{M/S}$ and the class of $\mathcal{L}$
are intertwined by the equation:
\begin{equation}\label{eqn:deformation1}
KS_{M/S}+ \cup[\mathcal{L}]\circ \rho\,=\,0 .
\end{equation}
We refer to \eqref{eqn:deformation1} as the equation {\em controlling the deformations}. 
Recall that the connecting homomorphism
$$\mu_{\mathcal{L}^{{\otimes k}}}\,:\, \pi_*\operatorname{Sym}^2 \mathcal{T}_{M/S}\,\lra\,
R^1\pi_*\mathcal{T}_{M/S}$$
is given by the formula (see \cite{BBMP20})
\begin{equation}\label{ch}
\mu_{\mathcal{L}^{\otimes k}}\,=\,\cup \left([k\cdot\mathcal{L}]-\frac{1}{2}[K_{M/S}]\right).
\end{equation}

\subsubsection{Rewriting the deformation equation}

We will now rewrite \eqref{eqn:deformation1} to produce a projective heat operator 
on $\mathcal{L}$. Throughout this subsection we assume that for any
positive $k \,\in\, \mathbb{Q}$, 
the connecting homomorphism $\mu_{\mathcal{L}^{\otimes k}}$ in \eqref{ch}
is an isomorphism. This condition holds, for example,  in the case of
moduli spaces of parabolic bundles. 

Now
\begin{eqnarray*}
	KS_{M/S} + \cup [\mathcal{L}]\circ \rho && =\, KS_{M/S}+\frac{1}{k}\left(\cup\left(k[\mathcal{L}]-\frac{1}{2}K_{M/S}\right)+ \cup \frac{1}{2}[K_{M/S}]\right)\circ \rho\\
	&&=\,KS_{M/S}+\mu_{\mathcal{L}^{\otimes k}}\circ \left(1+ \mu_{\mathcal{L}^{\otimes k}}^{-1} \circ (\cup \frac{1}{2}[K_{M/S}])\right)\circ \frac{1}{k}\rho\\
	&&=\,KS_{M/S}+\mu_{\mathcal{L}^{\otimes k}}\circ \widetilde{\rho}_k,
\end{eqnarray*}
where $\widetilde{\rho}_k\,=\,\left( 1+ \mu_{\mathcal{L}^{\otimes k}}^{-1}\circ \left(\cup \frac{1}{2}[K_{M/S}]\right)\right)\circ \frac{1}{k}{\rho}$ is the symbol map. 
Again assuming that the conditions of Theorem \ref{thm:existenceofheat} are
satisfied, we get a projective heat operator $\overline{\mathscr{D}}$ on $\mathcal{L}$ with symbol $\widetilde{\rho}_k$ such that the following diagram commutes 
\begin{equation}
\begin{tikzcd}
&\pi_* \mathcal{D}^{\leq 2}_M (\mathcal{L}^{\otimes k})\arrow[d, "symb"]\\
\mathcal{T}_S \arrow[ru,dotted] \arrow[r,"\rho_k"] & \pi_*\operatorname{Sym}^2\mathcal{T}_{M/S}
\end{tikzcd}
\end{equation}
This induces a projective connection on $\pi_*\mathcal{L}^{{\otimes k}}$
for any positive $k\,\in\, \mathbb{Z}$ with symbol $\widetilde{\rho}_k$

\subsubsection{Metaplectic correction d'apr\`es Scheinost-Schottenloher}

We can rewrite the left-hand side of \eqref{eqn:deformation1} as follows:
\begin{eqnarray*}
	KS_{M/S}+ \cup [\mathcal{L}]\circ \rho&&=\,KS_{M/S}+\frac{1}{k}\left(\cup\left([k\cdot \mathcal{L}]+\frac{1}{2}[K_{M/S}]\right)-\cup\frac{1}{2}[K_{M/S}]\right)\circ \rho\\
	&&= \,KS_{M/S}+ \mu_{\mathcal{L}^{{\otimes k}}\otimes K^{\frac{1}{2}}_{M/S}}\circ \rho_k,
\end{eqnarray*}
where $\rho_k\,:=\,\frac{1}{k}\rho$ and $\mathcal{L}^{\otimes k} \otimes K_{M/S}^{1/2}$ is considered as an element
of the rational Picard group. Thus, from \eqref{eqn:deformation1} the following equation
is obtained:
\begin{equation}\label{eqn:deformation2}
KS_{M/S}+ \mu_{\mathcal{L}^{{\otimes k}}\otimes K_{M/S}^{\frac{1}{2}}}\circ \rho_k\,=\,0.
\end{equation}

Assume that the other conditions of the Hitchin-van Geemen-de Jong existence theorem are satisfied.
Then Theorem \ref{thm:existenceofheat} tells us that there exists a unique projective heat operator 
$\widehat{\mathbb{D}}_{k}$ with symbol
$\rho_k$ and a connection on $\pi_*(\mathcal{L}^{k}\otimes K_{M/S}^{\frac{1}{2}})$.  As pointed out
in Remark \ref{rem:extntofrac}, the projective heat operator makes sense even if the square-root of 
$K_{M/S}$ does not exist.

It is easy to see that for any 
candidate symbol, there exists a second order projective operator
$\widehat{\mathbb{D}}$ on $K^{\frac{1}{2}}$ with the same given symbol. However, 
this operator is not a projective heat operator, since the natural projection of it to 
$\pi^*\mathcal{T}_{S}$ is zero. On the other hand, we have a projective heat operator $\overline{\mathbb{D}}$ on $\mathcal{L}^{\otimes k}$ with the same symbol 
$\widetilde{\rho}_k$. The following is then a natural question.

\begin{question}\label{qa}
Using the projective heat operator $\overline{\mathbb{D}}$ and the projective operator $\widehat{\mathbb{D}}$ with symbol
${\rho}_k$, can we construct a projective heat operator $\widetilde{\mathbb{D}}$ on $\mathcal{L}^k\otimes K_{M/S}^{\frac{1}{2}}$?
\end{question}

\begin{remark}
Observe that the equations in Theorem \ref{thm:existenceofheat} imply that there exists at most one heat operator 
provided
$$\mu_{\mathcal{L}^{\otimes k}\otimes K_{M/S}^{\frac{1}{2}}}\,:\, \pi_*\operatorname{Sym}^2 \mathcal{T}_{M/S}
\,\longrightarrow\, R^1\pi_*\mathcal{T}_{M/S}$$ is an isomorphism.
A positive answer to Question \ref{qa}
would immediately imply that the symbol of $\widetilde{\mathbb{D}}$
satisfies the equation for $\rho_{k}$ given  in 
\eqref{eqn:deformation2}. This will provide a necessary relation that the linear maps $\cup[{\mathcal{L}}]$ 
and $\cup[{K_{M/S}}]$ are scalar multiples of each other. This would give
an alternative, more conceptual understanding of Theorem 4.1 in \cite{BMW1} \end{remark}

\section{Parabolic Hitchin symbol as in Biswas-Mukhopadhyay-Wentworth }

In this section we first briefly recall the construction of the Hitchin connection for the 
moduli space of parabolic bundles $M^{par,rs}_{G,\bm{\tau}}$ obtained in
\cite{BMW1}. We refer the reader to Appendices \ref{sec:parabolicmoduli} and \ref{sec:paradet} for a brief review of the moduli stack of parabolic bundles and the parabolic determinant of cohomology line bundles. We will freely use the correspondence between parabolic $G$ bundles on a curve $C$ and equivariant ($\Gamma$-$G$)-bundles on a Galois cover $\widehat{C}$ of the curve $C$ with Galois group $\Gamma$. This is recalled in Appendix \ref{sec:orbcorr}.  

In particular, we focus on the parabolic Hitchin symbol defined in 
the paper \cite{BMW1}. Using restriction to fibers of the Hitchin map, we give a 
simplification of the expression for the symbol that enables us to compare the parabolic 
Hitchin symbol with the symbol of the Sugawara operators as constructed in \cite{TUY:89}. The main result of this section is Proposition \ref{prop:simple}. This is a key new feature and one of the main technical difficulties in the parabolic set-up that does not appear in \cite{Laszlo}.

\subsection{The Hitchin symbol}

In \cite{BMW1} (recalled in Appendix \ref{sec:orbcorr}), we identified the moduli space $M^{par}_{G,\bm 
\tau}$ of parabolic bundles with the moduli space $M^{\bm \tau,ss}_{G}$ of 
$(\Gamma,\,G)$-bundles on a Galois cover $\widehat{C}$ of the curve $C$ via the invariant pushforward 
functor \cite{BalajiBiswasNagaraj, BalajiSeshadri}.

This includes the following identifications:
Let $\mathcal{P}$ be a regularly stable parabolic bundle and $\widehat{\mathcal{P}}$ 
the corresponding $(\Gamma,\,G)$ bundle (see Appendix \ref{sec:orbcorr}). 
Let $\operatorname{Par}(\Pcal)$ (resp.\ $\operatorname{Spar}(\Pcal)$)
denote the bundle of parabolic (resp.\ strictly parabolic) automorphisms of
$\Pcal$. 
Then
\begin{equation}\label{eqn:iden1}
H^0(C,\, \operatorname{Spar}(\Pcal)\otimes K_C(D))\,\cong\, H^0(\widehat{C},\, \ad \mathcal{P}\otimes K_{\widehat{C}})^{\Gamma}
\end{equation}
and
\begin{equation}\label{eqn:iden2}
H^1(C,\, \operatorname{Par}(\Pcal))\,\cong\, H^1(\widehat{C},\, \ad \mathcal{P})^{\Gamma}.
\end{equation}

The Hitchin symbol was defined using the natural map 
\begin{equation}
\label{eqn:oldsymbol}
H^0(\widehat{C},\, \ad \mathcal{P}\otimes K_{\widehat{C}})^{\Gamma}\otimes H^1(\widehat{C},\,
\ad \mathcal{P})^{\Gamma} \xrightarrow{\ \kappa_g\ } H^1(\widehat{C},\, K_{\widehat{C}} )^{\Gamma} \,\cong\, \mathbb{C};
\end{equation}
where the last isomorphism is given by the Serre duality on $\widehat{C}$.
As in \cite[Prop.\ 2.16]{Hitchin:90}, this induces a natural map 
\begin{equation}\label{eqn:symbolontop}
\rho_{sym}\,:\, R^1\pi_{s *}\mathcal{T}_{\mathcal{C}/S}(-D)
\,\longrightarrow \,\pi_{e *}\operatorname{Sym}^2 \mathcal{T}_{M^{\bm \tau, rs}_{G}/S}\, ,
\end{equation}
where $\pi_{e}\,:\, M_{G}^{\bm \tau,rs}\,\to\,S$ and
$\pi_{s}\,: \,\mathcal{C}\,\to\, S$ are the projections.
On the other hand we also have the pairing 
\begin{equation}\label{eqn:naturalcech}
\begin{tikzcd}
H^0(C,\, \operatorname{Spar}(\Pcal)\otimes K_C(D))\otimes H^1(C,\, \operatorname{Par}(\Pcal))\arrow[d]&\\
H^1(C,\,\operatorname{Spar}(\Pcal)(D)\otimes \operatorname{Par}(\Pcal)\otimes K_C)	\arrow[r, "\kappa_{\mathfrak{g}}"]
&H^1(C,\,K_C)\,\cong\, \mathbb{C}.
\end{tikzcd}
\end{equation}
This also induces a map 
\begin{equation}
\label{eqn:symbolonbottom}
\widetilde{\rho}_{sym}\,:\, R^1\pi_{s*}\mathcal{T}_{\mathcal{C}/S}(-D)\,
\longrightarrow\, \pi_{e*} \operatorname{Sym}^2\mathcal{T}_{M_{G,\bm \tau}^{par,rs}/S}.
\end{equation}

The isomorphisms $H^1(C,\,K_C)\,\cong\, \mathbb{C}$ in \eqref{eqn:naturalcech} and 
$H^1(\widehat{C},\,K_{\widehat{C}})\cong\, \mathbb{C}$ are both given by the residue theorem 
and Serre duality, but for different curves. 
Hence, the  map on Hitchin symbols $\rho_{sym}$ and 
$\widetilde{\rho}_{sym}$ do not commute under the identifications given by \eqref{eqn:iden1} and \eqref{eqn:iden2}. However they are related
as follows:

\begin{lemma}\label{lem:normalization}
	Let $\rho_{sym}$ and $\widetilde{\rho}_{sym}$ be as above. Then
	$|\Gamma|\cdot \rho_{sym} \,=\,\widetilde{\rho}_{sym}. $
\end{lemma}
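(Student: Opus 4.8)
The plan is to trace both symbols back to the single construction of \cite[Prop.\ 2.16]{Hitchin:90} and to isolate the one place where they differ, namely the trivialization of the top cohomology by Serre duality. Recall that $\rho_{sym}$ is built from the pairing \eqref{eqn:oldsymbol} on $\widehat{C}$ and $\widetilde{\rho}_{sym}$ from \eqref{eqn:naturalcech} on $C$; both are the same cup product followed by the Killing form $\kappa_{\mathfrak{g}}$, the only difference being whether one ends in $H^1(\widehat{C},\,K_{\widehat{C}})^{\Gamma}\cong\mathbb{C}$ or in $H^1(C,\,K_C)\cong\mathbb{C}$. First I would check that the sheaf-theoretic part of the two pairings agrees under \eqref{eqn:iden1} and \eqref{eqn:iden2}: since the invariant pushforward $(p_*(-))^{\Gamma}$ is exact (as $|\Gamma|$ is invertible), it commutes with cup products, and $\kappa_{\mathfrak{g}}$ is $\Gamma$-invariant, so the only nonformal input is the identification $(p_*K_{\widehat{C}})^{\Gamma}\cong K_C$. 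This I would verify by a local computation at the ramification (i.e.\ marked) points, where $t=s^{e}$ and the invariant sections $s^{e-1}g(t)\,ds=\tfrac1e g(t)\,dt$ of $p_*K_{\widehat{C}}$ recover $K_C$.

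The key computation is the comparison of the two Serre trivializations. For a $\Gamma$-invariant class $\eta\in H^1(\widehat{C},\,K_{\widehat{C}})^{\Gamma}$ corresponding to $\beta\in H^1(C,\,K_C)$ under the pushforward isomorphism, I claim
\[
\operatorname{tr}_{\widehat{C}}(\eta)\,=\,|\Gamma|\cdot\operatorname{tr}_{C}(\beta).
\]
Indeed $p\colon\widehat{C}\to C$ has degree $|\Gamma|$, so $p_*[\widehat{C}]=|\Gamma|\,[C]$; equivalently, the residues of an invariant class at the $|\Gamma|$ points of a generic $\Gamma$-orbit are all equal and sum to $|\Gamma|$ times the residue of $\beta$ downstairs. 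Thus the upstairs and downstairs Serre dualities differ by exactly $|\Gamma|$.

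Finally I would track how this factor enters the symbol. The target $\operatorname{Sym}^2\mathcal{T}_{M^{par,rs}_{G,\bm\tau}/S}$ is built from tangent vectors $T_M=H^1(\ad\mathcal{P})$, which on the two curves are identified by the \emph{trace-free} isomorphism \eqref{eqn:iden2}. An element of $\operatorname{Sym}^2 T_M$ is a quadratic form on the cotangent space $T_M^{\ast}=H^0(\ad\mathcal{P}\otimes K)$ of Higgs fields, and the symbol is $\rho_{sym}(\mu)(\phi,\psi)=\langle\mu,\,\kappa_{\mathfrak{g}}(\phi\cup\psi)\rangle$, with the bracket the Serre pairing against the deformation class $\mu$. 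Crucially, realizing this quadratic form as a tensor in $\operatorname{Sym}^2 T_M$ requires identifying Higgs fields with $T_M^{\ast}$ via Serre duality, and on $\widehat{C}$ this identification is $|\Gamma|$ times the one on $C$ by the previous paragraph. Concretely, if $\{e_i\}$ is a basis of $T_M$ and $\{f_i^{C}\}$, $\{f_i^{\widehat{C}}\}$ the Serre-dual bases of Higgs fields on the two curves, then $f_i^{\widehat{C}}=|\Gamma|^{-1}f_i^{C}$ under \eqref{eqn:iden1}, so expanding $\rho_{sym}(\mu)=\sum_{k,l}\langle\mu,\,\kappa_{\mathfrak{g}}(f_k^{\widehat{C}}\cup f_l^{\widehat{C}})\rangle_{\widehat{C}}\,e_ke_l$ produces the factor $|\Gamma|\cdot|\Gamma|^{-2}=|\Gamma|^{-1}$ relative to the analogous expansion of $\widetilde{\rho}_{sym}(\mu)$. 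This gives $|\Gamma|\cdot\rho_{sym}=\widetilde{\rho}_{sym}$.

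The main obstacle is exactly this bookkeeping of the power of $|\Gamma|$: because the quadratic form underlying the symbol is naturally a function of Higgs fields while the target records it as a tensor in tangent vectors, the passage between the two uses Serre duality on the curve twice, so the single factor $|\Gamma|$ of the trace is inverted and squared into $|\Gamma|^{-2}$, and only the residual $|\Gamma|^{+1}$ from pairing with $\mu$ survives to give $|\Gamma|^{-1}$. One must therefore keep the tangent identification \eqref{eqn:iden2} trace-free while treating the cotangent realization as trace-dependent; conflating the two would yield the wrong power. The remaining input --- that under invariant pushforward the Killing pairing together with $(p_*K_{\widehat{C}})^{\Gamma}\cong K_C$ matches the strictly-parabolic/log-canonical pairing on $C$ at the ramification points --- is the technical heart, but it is already encoded in \eqref{eqn:iden1} and \eqref{eqn:iden2} recalled from \cite{BMW1}.
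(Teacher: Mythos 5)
Your proof is correct and rests on exactly the same key input as the paper's: the paper's entire argument is the commutativity of the square comparing the two Serre-duality trivializations, i.e.\ that $\operatorname{tr}_{\widehat{C}}$ composed with $H^1(C,K_C)\isorightarrow H^1(\widehat{C},K_{\widehat{C}})^{\Gamma}$ equals $|\Gamma|\cdot\operatorname{tr}_C$, which is your central claim. The additional bookkeeping you supply --- the factor $|\Gamma|$ entering once through the trace and twice inversely through the identification of the dual of the space of Higgs fields with $H^1(\operatorname{Par}(\Pcal))$, leaving a net $|\Gamma|^{-1}$ --- is left implicit in the paper's one-line ``immediate'' proof, and it correctly accounts for the direction of the scalar in the stated identity.
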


\begin{proof}
This is immediate from the commutativity of
$$
    \begin{tikzcd}
        H^1(C,K_C)\arrow[r,"\sim"]\arrow[d] & H^1(\widehat{C},\, K_{\widehat{C}}
        )^{\Gamma}\arrow[d] \\
        \CBbb\arrow[r,"\times |\Gamma|"] & \CBbb
    \end{tikzcd}
$$
\end{proof}

\subsection{Parabolic Hitchin connection via heat operators}\label{sec:parheatoperator}

Let $\phi \,:\, G\,\to\, \SL(V)$ be a linear representation satisfying the hypothesis 
of Section \ref{sec:paradet}. Let $\mathcal{L}_{\phi}$ be the pullback of the determinant 
of cohomology line bundle to $M^{\bm \tau,ss}_{G}$. Via the identification of parabolic 
$G$-bundles as equivariant bundles and Proposition \ref{prop:identification}, we have 
identified it with the parabolic determinant of cohomology 
$\operatorname{Det}_{par}(\nu_{\mathfrak{sl}(V)}(\phi( \bm \tau)))$. For notational 
convenience, we will drop $\phi$ when denoting the line bundle 
$\operatorname{Det}_{par, \phi}(\mathcal{P}, \bm \tau)$ and 
simply write $\operatorname{Det}_{par}(\mathcal{P}, \bm \tau)$.

In \cite{BMW1}, the authors produced a projective heat operator on the line bundle $\mathcal{L}^{\otimes k}_{\phi}$ whose symbol satisfies the following Hitchin-van Geemen-de Jong  equation:
\begin{equation}
KS_{M^{par,rs}_{G,\bm \tau}/S}+ \mu_{\mathcal{L}^{k}_{\phi}}\circ \left( \frac{1}{m_{\phi}k}\operatorname{Id}+ \mu^{-1}_{\mathcal{L}^{k}_{\phi}}\circ \left(\cup \frac{1}{2m_{\phi}k}
    [K_{M^{par,rs}_{G, \bm \tau}/S}]\right)\right)\circ \rho_{sym}\circ KS_{\mathcal{C}/S}=0.
\end{equation}\label{eqn:fundamental}
Setting $\rho=\rho_{sym}\circ KS_{\mathcal{C}/S}$ gives 
\begin{equation}\label{eqn:f1}
KS_{M^{par,rs}_{G, \bm \tau}/S}+ \mu_{\mathcal{L}^{k}_{\phi}}\circ \left(
    \operatorname{Id}+ \mu^{-1}_{\mathcal{L}^{k}_{\phi}}\circ \left(\cup
    \frac{1}{2}[K_{M^{par,rs}_{G, \bm \tau}/S}]\right)\right)\circ \frac{1}{m_{\phi}\cdot k}\rho=0,
\end{equation} where $k$ is a rational number.

Let $k\,=\,\ell/|\Gamma|$. Using the identification 	$\mathcal{L}_{\phi} \cong 	\left(\operatorname{Det}_{par}(\Pcal,\bm \tau)\right)^{\frac{|\Gamma|}{\ell}}$ in Proposition \ref{prop:identification}, from
\eqref{eqn:f1} we get that 
$$KS_{M^{par,rs}_{G, \bm{\tau}}/S}+ \mu_{\operatorname{Det}_{par}(\Pcal, \bm \tau)\otimes
K^{1/2}_{M^{par,ss}_{G, \bm \tau}/S} }\circ \frac{|\Gamma|}{
m_{\phi}\cdot \ell}\rho\,=\,0.$$
\begin{definition}\label{def:addingadj}
	For any rational number $a$, we will denote the projective heat operator on $\operatorname{Det}^{\otimes a}_{par,\phi}(\Pcal, \bm \tau)$ obtained in \cite{BMW1}
    by $\mathbb{D}(\frak{g}, a\cdot m_{\phi}\cdot\ell)$. 
\end{definition}
The following is one of the main results of \cite[Theorem 4.1]{BMW1}.

\begin{theorem}\label{thm:indepdence}
Let $\mathbb{L}$ be an element of
$\operatorname{Pic}(M^{par,rs}_{G, \bm{\tau}})\otimes \mathbb{Q}$ of
level $a$. Then there is an equality $\cup [\mathbb{L}]\,=\,\cup a [\operatorname{Det}]$ as linear maps
$\pi_{e*}\operatorname{Sym}^2\mathcal{T}_{M^{par,rs}_{G, \bm{\tau}}/S}\,
\to\, R^1\pi_{e*}\mathcal{T}_{M^{par,rs}_{G,\bm{\tau}}/S}$, where $\operatorname{Det}$ is the determinant of cohomology (non-parabolic) line bundle. 
\end{theorem}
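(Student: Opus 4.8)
The plan is to exploit linearity of the cup product in the line bundle class together with a precise description of the level-zero part of the rational Picard group. Since the assignment $\mathcal{L}\mapsto \cup[\mathcal{L}]$ depends only on the relative first Chern class $c_1(\mathcal{L})\in H^1(M^{par,rs}_{G,\bm\tau},\Omega^1_{M^{par,rs}_{G,\bm\tau}/S})$ and is additive in it, the desired identity $\cup[\mathbb{L}]=\cup\, a[\operatorname{Det}]$ is equivalent to the vanishing of $\cup[\mathbb{M}]$ for $\mathbb{M}:=\mathbb{L}-a[\operatorname{Det}]$. Because $\operatorname{Det}$ has level one, $\mathbb{M}$ has level zero, so the whole theorem reduces to proving that $\cup[\mathbb{M}]=0$ as a map $\pi_{e*}\operatorname{Sym}^2\mathcal{T}_{M^{par,rs}_{G,\bm\tau}/S}\to R^1\pi_{e*}\mathcal{T}_{M^{par,rs}_{G,\bm\tau}/S}$ whenever $\mathbb{M}$ has level zero.

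Next I would identify the level-zero rational line bundles geometrically. Passing to the Galois cover $\widehat{C}$ and the moduli of $(\Gamma,G)$-bundles via the correspondence of Appendix \ref{sec:orbcorr}, the non-parabolic determinant of cohomology accounts for the entire level, since for $G$ simple and simply connected the Picard group of the moduli of $G$-bundles on $\widehat{C}$ is generated by $\operatorname{Det}$. The remaining, level-zero, directions in $\operatorname{Pic}\otimes\mathbb{Q}$ arise solely from the $\Gamma$-equivariant structure at the ramification points, i.e.\ from the homogeneous line bundles $\mathscr{L}_i$ pulled back from the partial flag varieties $G/P_{\lambda_i}$ at the marked points $p_i$. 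Thus $\mathbb{M}$ is a rational combination of the $[\mathscr{L}_i]$, and it suffices to prove $\cup[\mathscr{L}_i]=0$ for each $i$; the Atiyah class of $\mathscr{L}_i$ is supported at $p_i$ and, at a regularly stable point $[\Pcal]$, is governed by the weight $\lambda_i$ acting along the flag direction.

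The vanishing is then a local computation at the marked points, carried out through the explicit pairing \eqref{eqn:naturalcech}. By Serre duality on the fibres, $\pi_{e*}\operatorname{Sym}^2\mathcal{T}_{M^{par,rs}_{G,\bm\tau}/S}$ is built from symmetric products of the cotangent space $H^0(C,\operatorname{Spar}(\Pcal)\otimes K_C(D))$ of \emph{strictly} parabolic Higgs fields, which by definition are nilpotent along $D$ with respect to the flags. The contribution of $c_1(\mathscr{L}_i)$ to $\cup[\mathscr{L}_i]$ is the residue at $p_i$ of a weighted trace $\langle \lambda_i,\, \Phi(p_i)\rangle$ of such a Higgs field $\Phi$ against the character $\lambda_i$; since $\Phi$ is strictly parabolic, its value $\Phi(p_i)$ is strictly block-upper-triangular for the flag, so its product with the block-diagonal $\lambda_i$ is again strictly block-upper-triangular and the trace vanishes identically. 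Hence each $\cup[\mathscr{L}_i]$ is zero on $\operatorname{Sym}^2\mathcal{T}_{M^{par,rs}_{G,\bm\tau}/S}$, giving $\cup[\mathbb{M}]=0$ and the theorem. This nilpotency is precisely the geometric shadow of the fact that the Sugawara symbols are insensitive to the highest weights, which is the conceptual heart of the argument.

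The main obstacle I expect is the bookkeeping in the local model at the marked points: one must pin down the Atiyah class of $\mathscr{L}_i$ in the equivariant picture on $\widehat{C}$, track the normalization factor $|\Gamma|$ relating the residue pairing on $C$ to that on $\widehat{C}$ (as in Lemma \ref{lem:normalization}), and verify that the identifications \eqref{eqn:iden1}--\eqref{eqn:iden2} intertwine the two trace pairings so that the nilpotency argument applies consistently. Establishing that the $[\mathscr{L}_i]$ genuinely exhaust the level-zero directions, rather than merely spanning part of them, also requires the uniformization description of the parabolic Picard group, and this is where the structure of the cover and its isotropy groups must be used with care.
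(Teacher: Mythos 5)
Your overall scaffolding is sound: by bilinearity of the cup product in the Atiyah class, Theorem \ref{thm:indepdence} is equivalent to the vanishing of $\cup[\mathbb{M}]$ for every level-zero class $\mathbb{M}$, and the level-zero part of $\operatorname{Pic}(M^{par,rs}_{G,\bm\tau})\otimes\mathbb{Q}$ is indeed accounted for by the tautological flag line bundles at the marked points. The linear algebra you invoke at the end is also correct: the residue at $p_i$ of a section of $\operatorname{Spar}(\Pcal)\otimes K_C(D)$ lies in the nilradical $\mathfrak{n}_i$, which is Killing-orthogonal to $\nu_{\gfrak}(\lambda_i)\in\hfrak$, so the weighted trace vanishes. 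The genuine gap is in the middle step: you assert, without justification, that the cohomological map $\cup[\mathscr{L}_i]\colon \pi_{e*}\operatorname{Sym}^2\mathcal{T}_{M^{par,rs}_{G,\bm\tau}/S}\to R^1\pi_{e*}\mathcal{T}_{M^{par,rs}_{G,\bm\tau}/S}$ is computed by the pointwise formula $\operatorname{Res}_{p_i}\langle\lambda_i,\Phi(p_i)\rangle$. The source of this map is an $H^0$ of a symmetric square of the tangent sheaf and the target is an $H^1$; to evaluate the cup product one needs either an explicit cocycle representative of the Atiyah class of $\mathscr{L}_i$ on the \emph{moduli space} (not merely a statement that it is ``supported at $p_i$'' on the curve) contracted at the cochain level, or a mechanism converting global sections of $\operatorname{Sym}^2\mathcal{T}$ into objects on which $c_1(\mathscr{L}_i)$ acts linearly. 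Your proposed formula is linear in the Higgs field while the input is quadratic, which is a symptom of this missing identification; nilpotency of the residue does not by itself kill a class in $R^1\pi_{e*}\mathcal{T}_{M^{par,rs}_{G,\bm\tau}/S}$.

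The proof in \cite{BMW1} supplies exactly the missing mechanism, and that is where essentially all of the work lies. One first reduces to $\SL_r$ with full flags (via a faithful representation, and a small perturbation of the weights that does not change stability), then embeds $\pi_{e*}\operatorname{Sym}^2\mathcal{T}$ into the degree-two functions on the strongly parabolic Higgs moduli space, converts them into Hamiltonian vector fields tangent to the fibres of the Hitchin map, and shows that $\cup[\mathbb{L}]$ is determined by the restriction of $\mathbb{L}$ to a generic fibre $A_{\vec b}$, a spectral Jacobian. Making that restriction legitimate requires the codimension-two estimate on the locus of irreducible one-nodal spectral curves, flatness of the Hitchin map, and a Hartogs extension. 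Only then does your observation enter, in integrated form: the flag line bundles pull back to $A_{\vec b}$ as lines independent of the spectral line bundle, hence contribute nothing, so $\cup[\mathbb{L}]$ depends only on the level. Your nilpotency computation is the correct infinitesimal shadow of that decisive step, but without the abelianization (or an equally careful cocycle-level computation of the Atiyah class, including the $|\Gamma|$ normalizations of Lemma \ref{lem:normalization} that you rightly flag as delicate) the argument does not close.
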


\begin{remark}
The above result should be put in the more general context of deformation theory of the 
moduli space of the parabolic bundles as studied in Boden-Yokogawa \cite{BY}, and the 
birational variation of these moduli spaces as the weights vary.
\end{remark}

Since line bundles on $M_{G,\bm \tau}^{par,rs}$ are pull-backs of rational powers of line 
bundles from $M_{\SL(V), \bm \alpha}^{par,s}$ for an 
appropriate choice of  representation 
$(\phi,\,V)$ of $G$, the following is an immediate consequence of Theorem 
\ref{thm:indepdence}.

\begin{corollary}\label{cor:mostimportant}
Let $\widehat{M}_G$ and $M_{G, \bm \tau}^{par,rs}$ be as in Section
\ref{sec:paradet}. Let $(\phi,\,V)$ be a representation of $G$. Then the line bundle $K_{\widehat{M}_G/S}$
restricted to $M_{G, \bm \tau}^{par,rs}$ is of level $-\frac{2h^{\vee}(\mathfrak{g}).|\Gamma|}{m_{\phi}}$ with respect to $\SL(V)$, and hence
$$\cup [K_{M_{G, \bm \tau}^{par,rs}/S}]\,=\,\cup \frac{[{K_{\widehat{M}_G/S}}]}{|\Gamma|}$$ as
linear maps from $\pi_*\operatorname{Sym}^2\mathcal{T}_{M_{G, \bm \tau}^{par,rs}/S}$ to
$R^1\pi_*\mathcal{T}_{M_{G, \bm \tau}^{par,rs}/S}$.
\end{corollary}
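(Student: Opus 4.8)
The plan is to reduce the asserted identity of linear maps to a purely numerical comparison of levels, and then to quote Theorem \ref{thm:indepdence}. By that theorem, for any $\mathbb{L}\in\operatorname{Pic}(M^{par,rs}_{G,\bm\tau})\otimes\mathbb{Q}$ of level $a$ with respect to $\SL(V)$ one has $\cup[\mathbb{L}]=a\cup[\operatorname{Det}]$; in particular $\cup[\mathbb{L}]$ depends on $\mathbb{L}$ only through its level. Since, as recorded just above the statement, every line bundle on $M^{par,rs}_{G,\bm\tau}$ is a pullback of a rational power of a line bundle from $M^{par,s}_{\SL(V),\bm\alpha}$, both $K_{M^{par,rs}_{G,\bm\tau}/S}$ and the restriction $K_{\widehat{M}_G/S}|_{M^{par,rs}_{G,\bm\tau}}$ have well-defined $\SL(V)$-levels. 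It therefore suffices to compute these two levels and to check that they differ by the factor $|\Gamma|$.

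First I would compute the level of $K_{\widehat{M}_G/S}$. Applied to the curve $\widehat{C}$, the classical description of the canonical bundle of the moduli of $G$-bundles gives $K_{\widehat{M}_G/S}\cong\Theta_G^{\otimes(-2h^\vee(\mathfrak{g}))}$, where $\Theta_G$ is the ample generator of the relevant Picard group and $h^\vee(\mathfrak{g})$ is the dual Coxeter number. Pulling back the $\SL(V)$-determinant along $\phi$ multiplies levels by the Dynkin index, so that $\Theta_G=\operatorname{Det}^{1/m_\phi}$ in the rational Picard group; hence, measured intrinsically on $\widehat{C}$, the canonical bundle has level $-2h^\vee(\mathfrak{g})/m_\phi$ with respect to $\SL(V)$.

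Next I would insert the covering factor. The level relevant to the identity is normalized by the determinant $\operatorname{Det}$ on the original curve $C$ via the parabolic structure, not on $\widehat{C}$. Under the equivariant correspondence and Proposition \ref{prop:identification}, the $\SL(V)$-determinant on $\widehat{C}$ corresponds to the $|\Gamma|$-th power of its parabolic counterpart on $C$; equivalently, the Serre-duality identifications $H^1(C,K_C)\cong\mathbb{C}$ and $H^1(\widehat{C},K_{\widehat{C}})^\Gamma\cong\mathbb{C}$ differ by $|\Gamma|$, exactly as in Lemma \ref{lem:normalization}. Carrying this factor through converts the intrinsic level $-2h^\vee(\mathfrak{g})/m_\phi$ into the stated level $-2h^\vee(\mathfrak{g})\,|\Gamma|/m_\phi$ of $K_{\widehat{M}_G/S}|_{M^{par,rs}_{G,\bm\tau}}$, whereas the analogous computation performed directly on $C$ gives $K_{M^{par,rs}_{G,\bm\tau}/S}$ the level $-2h^\vee(\mathfrak{g})/m_\phi$.

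Finally, feeding these two levels into Theorem \ref{thm:indepdence} yields $\cup[K_{\widehat{M}_G/S}]=-\tfrac{2h^\vee(\mathfrak{g})\,|\Gamma|}{m_\phi}\cup[\operatorname{Det}]$ and $\cup[K_{M^{par,rs}_{G,\bm\tau}/S}]=-\tfrac{2h^\vee(\mathfrak{g})}{m_\phi}\cup[\operatorname{Det}]$; dividing the first by $|\Gamma|$ produces the asserted equality of maps $\pi_*\operatorname{Sym}^2\mathcal{T}\to R^1\pi_*\mathcal{T}$. The hard part will be the middle step, namely tracking the factor $|\Gamma|$ correctly: the coboundary maps $\mu$ are built from Serre duality, which lives on two different curves $C$ and $\widehat{C}$, and one must apply the discrepancy of Lemma \ref{lem:normalization} consistently when transporting the intrinsically computed canonical level on $\widehat{C}$ into the $\SL(V)$-normalization used for the parabolic moduli over $C$.
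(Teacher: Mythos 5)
Your proposal is correct and follows essentially the same route as the paper's proof: reduce to a comparison of $\SL(V)$-levels via Theorem \ref{thm:indepdence}, compute the level of $K_{\widehat{M}_G/S}$ from the Kumar--Narasimhan--Ramanathan formula $K_{\widehat{M}_G/S}\cong\Theta_G^{\otimes(-2h^\vee(\mathfrak{g}))}$ together with the Dynkin index and the level-$|\Gamma|$ identification of $\mathcal{L}_\phi$ from Proposition \ref{prop:identification}, and compare with the level $-2h^\vee(\mathfrak{g})/m_\phi$ computed directly on $C$. The only difference is cosmetic: you route the factor $|\Gamma|$ through the Serre-duality discrepancy of Lemma \ref{lem:normalization}, whereas the paper cites Appendix \ref{sec:paradet} directly, and both arguments are equally terse about the flag-direction components of $K_{M^{par,rs}_{G,\bm\tau}/S}$ contributing level zero.
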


\begin{proof}
    On the one hand, we have  the fact from  \cite{KNR:94} that
    $K_{\widehat{M}_{G}/S}$ is
    $-2h^{\vee}(\mathfrak{g})\mathcal{L}_{\phi}$,
    where $\mathcal{L}_{\phi}$ is the ample generator of the Picard group
    of the moduli stack of $\SL(V)$ bundles on $\widehat{C}$, where $\widehat C\to
    C$ is a $\Gamma$-cover. Moreover, $\mathcal{L}_{\phi}$ restricted to
    $M_{G, \bm \tau}^{par,rs}/S$ is of level $|\Gamma|$ (see Appendix \ref{sec:paradet}).
On the other hand,   the canonical bundle $K_{M_{G, \bm
    \tau}^{par,rs}/S}$ has a component that is  $-2h^{\vee}(\mathfrak{g})$ times  the ample
    generator of the Picard group of the moduli stack of $G$ bundles on
    $C$, which in turn is of level $\frac{1}{m_{\phi}}$ with respect to the $\mathfrak{sl}(V)$ embedding of $\mathfrak{g}$. 
    This proves the result.
\end{proof}
Corollary \ref{cor:mostimportant} further simplifies the parabolic Hitchin symbol, as shown by 
the following.

\begin{proposition}\label{prop:simple} Assume that $m_{\phi}\cdot\ell+h^{\vee}(\mathfrak{g})\,\not=\, 0$. Then
\begin{equation}
\frac{1}{m_{\phi}\ell}\left(1+ \mu_{\operatorname{Det}_{par}(\bm \tau)}^{-1}\circ \left(\frac{1}{2}\cup
[K_{M^{par,rs}_{\bm \tau,G}/S}]\right)\right)\circ {\widetilde{\rho}_{sym}}\, =\,\frac{\widetilde{\rho}_{sym}}{m_{\phi}\cdot\ell+h^{\vee}(\mathfrak{g})}.
\end{equation}
\end{proposition}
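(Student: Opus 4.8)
The plan is to turn the stated identity of operators into an elementary scalar computation, exploiting the principle of Theorem \ref{thm:indepdence}: on $\pi_{e*}\operatorname{Sym}^2\mathcal{T}_{M^{par,rs}_{G,\bm\tau}/S}$ every cup-product map $\cup[\mathbb{L}]$ depends only on the level of $\mathbb{L}$ and equals that level times the single map $\beta:=\cup[\operatorname{Det}]$ attached to the non-parabolic determinant of cohomology. Write $\mu:=\mu_{\operatorname{Det}_{par}(\bm\tau)}$ for brevity. First I would record the two levels that enter \eqref{ch}. By Corollary \ref{cor:mostimportant} together with Theorem \ref{thm:indepdence}, the class $[K_{M^{par,rs}_{G,\bm\tau}/S}]$ has level $-2h^{\vee}(\mathfrak{g})/m_\phi$, so that $\cup[K_{M^{par,rs}_{G,\bm\tau}/S}]=-\tfrac{2h^{\vee}(\mathfrak{g})}{m_\phi}\beta$. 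For $\operatorname{Det}_{par}(\bm\tau)$ I would use the identification $\mathcal{L}_\phi\cong(\operatorname{Det}_{par}(\bm\tau))^{|\Gamma|/\ell}$ of Proposition \ref{prop:identification} and the fact (recorded in Corollary \ref{cor:mostimportant}) that $\mathcal{L}_\phi$ has level $|\Gamma|$; since level is additive under tensor and rational powers, $\operatorname{Det}_{par}(\bm\tau)=\mathcal{L}_\phi^{\ell/|\Gamma|}$ has level $(\ell/|\Gamma|)\cdot|\Gamma|=\ell$, i.e.\ $\cup[\operatorname{Det}_{par}(\bm\tau)]=\ell\,\beta$.

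Next I would assemble $\mu$ from \eqref{ch} with $k=1$, namely $\mu=\cup\bigl([\operatorname{Det}_{par}(\bm\tau)]-\tfrac12[K_{M^{par,rs}_{G,\bm\tau}/S}]\bigr)=\ell\,\beta-\tfrac12\bigl(-\tfrac{2h^{\vee}(\mathfrak{g})}{m_\phi}\bigr)\beta=\bigl(\ell+\tfrac{h^{\vee}(\mathfrak{g})}{m_\phi}\bigr)\beta$. The hypothesis $m_\phi\ell+h^{\vee}(\mathfrak{g})\neq0$ is precisely the statement that the scalar $\ell+h^{\vee}(\mathfrak{g})/m_\phi$ is nonzero, which is consistent with the standing assumption that $\mu$ is an isomorphism. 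Rather than invert $\beta$ on its own, I would express $\tfrac12\cup[K_{M^{par,rs}_{G,\bm\tau}/S}]$ directly through $\mu$: combining the two proportionalities gives $\tfrac12\cup[K_{M^{par,rs}_{G,\bm\tau}/S}]=-\tfrac{h^{\vee}(\mathfrak{g})}{m_\phi}\beta=\frac{-h^{\vee}(\mathfrak{g})/m_\phi}{\ell+h^{\vee}(\mathfrak{g})/m_\phi}\,\mu$. Hence $\mu^{-1}\circ\bigl(\tfrac12\cup[K_{M^{par,rs}_{G,\bm\tau}/S}]\bigr)=\frac{-h^{\vee}(\mathfrak{g})/m_\phi}{\ell+h^{\vee}(\mathfrak{g})/m_\phi}\operatorname{Id}$, and therefore $1+\mu^{-1}\circ\bigl(\tfrac12\cup[K_{M^{par,rs}_{G,\bm\tau}/S}]\bigr)=\frac{\ell}{\ell+h^{\vee}(\mathfrak{g})/m_\phi}\operatorname{Id}$.

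Because this endomorphism is a scalar multiple of the identity on all of $\pi_{e*}\operatorname{Sym}^2\mathcal{T}_{M^{par,rs}_{G,\bm\tau}/S}$, precomposition with $\widetilde{\rho}_{sym}$ requires no information about the image of $\widetilde{\rho}_{sym}$, and the left-hand side of the proposition becomes $\frac{1}{m_\phi\ell}\cdot\frac{\ell}{\ell+h^{\vee}(\mathfrak{g})/m_\phi}\,\widetilde{\rho}_{sym}$. Finally I would simplify the scalar, $\frac{1}{m_\phi\ell}\cdot\frac{\ell}{\ell+h^{\vee}(\mathfrak{g})/m_\phi}=\frac{1}{m_\phi\ell+h^{\vee}(\mathfrak{g})}$, which yields exactly $\widetilde{\rho}_{sym}/(m_\phi\ell+h^{\vee}(\mathfrak{g}))$, as required.

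The step I expect to demand the most care is the determination of the level of $\operatorname{Det}_{par}(\bm\tau)$: one must thread the normalizations of Proposition \ref{prop:identification}, in particular the $|\Gamma|/\ell$ power relating $\mathcal{L}_\phi$ to $\operatorname{Det}_{par}(\bm\tau)$ and the additivity of level under rational powers, to be certain the factors of $|\Gamma|$ cancel and the level is exactly $\ell$ rather than some rescaling. Once Theorem \ref{thm:indepdence} has reduced all the cup-product maps to multiples of the common map $\beta$, the remainder is a purely formal manipulation of scalars.
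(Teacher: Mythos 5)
Your proof is correct and follows essentially the same route as the paper: both arguments reduce to the scalar identity $\tfrac12\cup[K_{M^{par,rs}_{G,\bm\tau}/S}]=\frac{-h^{\vee}(\mathfrak{g})}{m_\phi\ell+h^{\vee}(\mathfrak{g})}\,\mu_{\operatorname{Det}_{par}(\bm\tau)}$ via Corollary \ref{cor:mostimportant}, Proposition \ref{prop:identification}, the Kumar--Narasimhan--Ramanathan computation of the canonical bundle, and formula \eqref{ch}; your ``level'' bookkeeping through a single map $\beta=\cup[\operatorname{Det}]$ is just a repackaging of the paper's chain of cup-product manipulations. A minor improvement on your side: by observing that $1+\mu^{-1}\circ\bigl(\tfrac12\cup[K]\bigr)$ is a scalar multiple of the identity on the whole of $\pi_{e*}\operatorname{Sym}^2\mathcal{T}_{M^{par,rs}_{G,\bm\tau}/S}$, you avoid the paper's (unnecessary) appeal to invertibility of $\widetilde{\rho}_{sym}$.
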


\begin{proof}Since $\widetilde{\rho}_{sym}$ is invertible, we need to show that 
$$\frac{m_{\phi}\cdot\ell+h^{\vee}(\mathfrak{g})}{m_{\phi}\cdot\ell}\left(1+
\mu_{\operatorname{Det}_{par}(\bm \tau)}^{-1} \left(\frac{1}{2}\cup
[K_{M^{par,rs }_{\vec{\alpha},\SL_r}/S}]\right)\right) \,=\,\operatorname{Id}.$$
So it suffices to prove that
$$\mu_{\operatorname{Det}_{par}}^{-1}(\bm \tau)\circ \left(\frac{1}{2}\cup
[K_{M^{par,rs }_{G, \bm \tau}/S}]\right)\,=\,\left(-1+\frac{m_{\phi}\cdot\ell}{m_{\phi}\cdot\ell+h^{\vee}(\mathfrak{g})}\right)\operatorname{Id}.$$
By multiplying with $\mu_{\operatorname{Det}_{par}(\bm \tau)}$, it suffices to show that
\begin{equation}
\cup [K_{M^{par,rs}_{G, \bm \tau}/S}]\,=\,\left(\frac{-2h^{\vee}(\mathfrak{g})}{m_{\phi}\cdot \ell+h^{\vee}(\mathfrak{g})}\right)\mu_{\operatorname{Det}_{par}(\bm \tau)}.
\end{equation}
Now by \cite{KNR:94, KumarNarasimhan:97}, applied to the moduli space $\widehat{M}_G$ of
principal $G$ bundle on $\widehat{C}$, we get that
\begin{equation}
[K_{\widehat{M}_{G}}]\,=\,-2h^{\vee}(\mathfrak{g}).m_{\phi}.[\mathcal{L}_{\phi}].
\end{equation}
By Corollary \ref{cor:mostimportant},
$$
\cup [K_{M^{par,rs }_{G, \bm \tau }/S}]=\cup
    \frac{-2h^{\vee}(\mathfrak{g})[\mathcal{L}_{\phi}]}{m_{\phi}\cdot|\Gamma|}
=\cup\frac{-2h^{\vee}(\mathfrak{g})}{m_{\phi}\cdot |\Gamma|}\left( \frac{|\Gamma|[\operatorname{Det}_{par}(\bm \tau)]}{\ell}\right).
$$
Rewriting the above equation, we find
\begin{eqnarray*}
\cup m_{\phi}\cdot\ell [K_{M^{par,rs}_{G, \bm \tau}/S}]&\,=\,&\cup \left(\frac{-2h^{\vee}(\mathfrak{g})\cdot m_{\phi}\cdot\ell}{|\Gamma|\cdot m_{\phi}}\right)\frac{|\Gamma|[\operatorname{Det}_{par}(\bm \tau)]}{\ell},\\
\cup \frac{m_{\phi}\cdot \ell}{m_{\phi}\cdot\ell+h^{\vee}(\mathfrak{g})}[K_{M^{par}_{G,
\bm \tau}/S}]&\,=\,&\left(\cup \frac{1}{m_\phi\cdot\ell+h^{\vee}(\mathfrak{g})} \left(-2h^{\vee}(\mathfrak{g})[\operatorname{Det}_{par}]\right)\right),\\
\cup [K_{M^{par,rs}_{G, \bm \tau}/S}]&\,=\,&\left(\cup\frac{-2h^{\vee}(\mathfrak{g})}{m_\phi\cdot\ell+h^{\vee}(\mathfrak{g})}\left( [\operatorname{Det}_{par}(\bm \tau)]-\frac{1}{2}[K_{M^{par,rs}_{\bm \tau, G}/S}]\right)\right),\\
\cup \frac{1}{2}[K_{M^{par}_{\bm \tau,G}/S}]&\,=\,&
    \left(\frac{-h^{\vee}(\mathfrak{g})}{m_{\phi}\cdot
    \ell+h^{\vee}(\mathfrak{g})}\right)\cdot
    \mu_{\operatorname{Det}_{par}(\bm \tau)}\qquad\qquad\text{(from
    \eqref{ch})}\ .
\end{eqnarray*}
This completes the proof.
\end{proof}

\section{Proof of Theorem \ref{thm:maintheorem} }
\label{sec:sugawara}
In this section, we give a proof of the main theorem in this article by comparing the Sugawara tensor and the parabolic analog of the heat operator with a given symbol constructed by the authors in \cite{BMW1}.

Let $\mathcal{P}ar^{rs}_{G}(C,\bm p,\bm \tau)$ be the open substack parametrizing regularly 
stable parabolic bundles of parabolic type $\bm \tau$. Then the natural map
$\mathcal{P}ar_G^{rs}(C,\bm p , \bm \tau)\,\to\, M_{G,\bm \tau}^{par,rs}(C,\bm p)$ is a 
gerbe banded by the center $Z(G)$ of the group $G$.

Similarly, let ${\mathcal{Q}}_{\bm \tau}^{rs}$ be the open ind-subscheme of 
${\mathcal{Q}}_{\bm \tau}$ parametrizing the regularly stable bundles. The natural map
$\pi^{reg}$ given by the composition
$$\pi^{reg}\,:\,{\mathcal{Q}}_{\bm \tau}^{rs}\,\longrightarrow\,
\mathcal{P}ar_G^{rs}( \bm \tau)\,\longrightarrow\, M_{G,\bm \tau}^{par, rs}$$
is a $L_{\mathcal{C}',G}/Z(G)$ torsor which is \'etale locally trivial. Here,
$L_{\mathcal{C}',G}$ is the loop group associated to a punctured curve.

\subsection{Twisted $\mathcal{D}$-modules via quasi-section of
	Drinfeld-Simpson}

Let $\pi_s\,:\,\mathcal{C}\,\to\, S$ be a versal family of $n$-pointed 
smooth curves of genus $g$. We choose formal coordinates $\bm \xi\,=\,(\xi_1,\,\cdots,\,\xi_n)$ 
along the sections $\bm p$.

Let $\bm \tau\,=\,(\tau_1,\,\cdots,\,\tau_n)$ be as in Section
\ref{sec:uniformization}, and let $\mathcal{Q}_{\tau_i}$ be the affine flag
variety associated to $\tau_i$. The above choice of coordinates gives an
identification of $\mathcal{Q}_{\bm \tau}$ with $\prod_{i=1}^n
L_{G}/\mathcal{P}_{\tau_i}$. By the discussion in
\cite[Secs. 5.2.9-5.2.12]{BD1}, the infinitesimal action, of the central extension
$\widehat{L}_{G}$ of the loop group, on $\mathcal{Q}_{\tau_i}$ gives a map 
$$
\overline{U}(\widehat{\mathfrak{g}}_{\xi_i})^{opp}
\,\longrightarrow \,H^0(\mathcal{Q}_{\tau_i},\,
\mathcal{D}_{\mathcal{Q}_{\tau_i}/S}
(\mathscr{L}_{\vec{\lambda}})).
$$
Here $\mathcal{D}_{\mathcal{Q}_{\tau_i}/S}(\mathscr{\mathscr{L}}_{\lambda_i})$ is the ring 
of relative $\mathscr{L}_{{\lambda}_i}$-twisted differential operators on 
$\mathcal{Q}_{\tau_i}$, and $\overline{U}(\widehat{\mathfrak{g}}_{\xi_i})$ is a suitable 
completion of the universal enveloping algebra of $\widehat{\mathfrak{g}}_{\xi_i}$,
and $\overline{U}(\widehat{\mathfrak{g}}_{\xi_i})^{opp}$ is the opposite
algebra. 
Summing over all the coordinates, we get a map 
\begin{equation}\label{eqn:filter}
\bigoplus_{i=1}^n\left( \overline{U}(\widehat{\mathfrak{g}}_{\xi_i})^{opp}\right)
\,\longrightarrow\, H^0(\mathcal{Q}_{\bm \tau},\,
\mathcal{D}_{\mathcal{Q}_{\bm\tau}/S}(\mathscr{L}_{\vec{\lambda}}))
\end{equation}
which via further restriction gives a map $\bigoplus_{i=1}^n\left( \overline{U}
(\widehat{\mathfrak{g}}_{\xi_i})^{opp}\right)\,\to\,
H^0(\mathcal{Q}^{rs}_{\bm \tau},\, \mathcal{D}_{\mathcal{Q}_{\bm \tau}/S}
(\mathscr{L}_{\vec{\lambda}}))$. Both sides of \eqref{eqn:filter} carry natural
filtrations and the map in \eqref{eqn:filter} is a map of filtered sheaves of algebras.

As in \cite{Laszlo}, we consider a quasi-section of $\pi^{rs}$. The result of
\cite{DS} implies
that the natural \'etale locally trivial torsor $\pi^{rs}\,:\, \mathcal{Q}_{\bm \tau}^{rs}
\,\to\, M_{G,\bm \tau}^{rs}$ has a quasi-section $N^{par,rs}_{G,\bm \tau}
\,\xrightarrow{\,\ r\,\ }\, M_{G,\bm \tau}^{par,rs}$ such that $r$
is an \'etale epimorphism, and there is a map
$\sigma\,:\, N_{G,\tau}^{par,rs}\,\to\, \mathcal{Q}_{\bm \tau}^{rs}$
such that the following diagram commutes
\begin{equation}
\begin{tikzcd}[column sep=large]
& \mathcal{Q}^{rs}_{\bm \tau} \arrow[d, "\pi^{rs}"]\\
N_{G,\bm \tau}^{par,rs} \arrow[r,"r"]\arrow{rd}\arrow[ru, "\sigma"] & M_{G,\bm \tau}^{par,rs} \arrow[d,"\pi_e"]\\
&S
\end{tikzcd}
\end{equation}
Now since the map $r$ is \'etale, we get an isomorphism 
$$H^0(N_{G,\bm \tau}^{par,rs},\, r^*\mathcal{T}_{M_{G,\bm \tau}^{par,rs}/S})\,=\,
H^0(N_{G,\bm \tau}^{par,rs},\, \mathcal{T}_{N_{G,\bm \tau}^{par,rs}/S}).$$

Given any relative differential operator $\mathfrak{D}$ on the line bundle
$\mathscr{L}_{\vec{\lambda}}$, we can pull it back via $\sigma$
(see Section 8.1 and 8.7 in \cite{Laszlo}) to a differential operator on the line bundle
$\sigma^* \mathscr{L}_{\vec{\lambda}}$ which, by an abuse of notation, is again
denote by $\mathscr{L}_{\vec{\lambda}}$. Thus, \eqref{eqn:filter} gives the following map of filtered sheaves of algebras
\begin{equation}\label{zl1}
\hbar_{\mathscr{L}_{\vec{\lambda}}}\,:\, \bigoplus_{i=1}^n
\left( \overline{U}(\widehat{\mathfrak{g}}_{\xi_i})^{opp}\right)\,\longrightarrow\,
H^0(N_{G,\bm \tau}^{par,rs},\,\mathcal{D}_{N_{G,\bm \tau}^{par,rs}/S}(\mathscr{L}_{\vec{\lambda}})).
\end{equation}

The sheaf of Lie algebras $\bigoplus_{i=1}^n\left(
\overline{U}(\widehat{\mathfrak{g}}_{\xi_i})^{opp}\right)$ carries a natural
PBW filtration and we let  $\left(\bigoplus_{i=1}^n\left(
\overline{U}(\widehat{\mathfrak{g}}_{\xi_i})^{opp}\right)\right)^{\leq m}$
be the $m$-th part of the filtration. Then the following diagram is commutative 
\begin{equation}\label{eqn:momentmap}
\begin{tikzcd}[column sep=small]
& H^0(N_{G,\bm \tau}^{par,rs},\mathcal{D}_{N_{G,\bm \tau}^{par,rs}/S}^{\leq m}(\mathscr{L}_{\vec{\lambda}}))\arrow[rd,"symb^{\leq m}"]& \\
\left(\bigoplus_{i=1}^n\left( \overline{U}(\widehat{\mathfrak{g}}_{\xi_i})^{opp}\right)\right)^{\leq m}\arrow[ur,"\hbar_{\mathscr{L}_{\vec{\lambda}}}^{\leq m}"]\arrow[dr,"\hbar^{\leq m}_{\mathcal{O}}"']& & H^0(N_{G,\bm \tau}^{par,rs}, \operatorname{Sym}^m\mathcal{T}_{N_{G,\bm \tau}^{par,rs}/S})\\
& H^0(N_{G,\bm \tau}^{par,rs},\mathcal{D}_{N_{G,\bm \tau}^{par,rs}/S}^{\leq m}(\mathcal{O}_{N_{G,\bm \tau}^{par,rs}}))\arrow[ru,"symb^{\leq m}"']&
\end{tikzcd}
\end{equation}
where $symb^{\leq m}$ denotes the principal $m$-th order symbol map of a differential operator. 

\subsection{Projective heat operator from Sugawara}\label{se8.2}

We now give a local description of the map $\hbar_{\mathscr{L}_{\vec{\lambda}}}$. Let 
$\Pcal$ be a regularly stable parabolic $G$-bundle in the moduli space of
parabolic bundles of 
parabolic weights $\vec{\lambda}$ on a curve $C$ with parabolic structure over $\bm p$. We 
consider it as a point in $N_{G,\bm \tau}^{par,rs}$. The tangent space at $\Pcal$ is given 
by $H^1(C,\, \operatorname{Par}(\Pcal))$, where $\operatorname{Par}(\Pcal)$ is the sheaf of 
Lie algebras given by parabolic endomorphisms of the bundle $\Pcal$.

Let $P_{i}\,\subset\, {G}$ be the parabolic subgroup determined by the weight $\lambda_i$ attached to the point
$p_i\,\in\, \bm p$, and let $\mathfrak{p}_i$ be the corresponding Lie algebra. We denote by $\mathfrak{p}_i^{-}$
the opposite parabolic and by $\mathfrak{n}_{i}^{-}$ the nilpotent radical of
$\mathfrak{p}_i^{-}$. We have a short exact sequence of sheaves 
	\begin{equation}\label{zl}
	0 \,\longrightarrow\, \operatorname{Par} (\Pcal)\,\longrightarrow\,
\operatorname{Par}(\Pcal)\left(\sum_{i=1}^nm_ip_i\right)\,\longrightarrow\,
\bigoplus_{i=1}^n\left(\mathfrak{n}_{i}^{-}\oplus
\bigoplus_{j=1}^{m_i}  \mathfrak{g}\otimes \xi_i^j \right)\,\longrightarrow\, 0,
\end{equation}
where $m_1,\,\cdots,\,m_n$ are nonnegative integers. Taking the long exact sequence
of cohomologies associated to \eqref{zl}, we get a homomorphism
	\begin{equation}
	\bigoplus_{i=1}^n\left(\mathfrak{n}_{i}^{-}\oplus \mathfrak{g}\otimes\mathbb{C}[\xi_i^{-1}]\xi_i^{-1}\right)
\,\longrightarrow\, H^1(C,\, \operatorname{Par}(\Pcal)).
\end{equation}
Combining this with the natural projection $\mathfrak{g}\otimes \mathbb{C}((\xi_i))\,
\to\,
\mathfrak{n}_i^{-}\oplus \mathfrak{g}\otimes \mathbb{C}[\xi_i^{-1}]\xi_i^{-1}$ for each
$1\,\leq\, i\,\leq\, n$, we get a homomorphism 
\begin{equation}\label{eqn:projection1}
\rho_i\,:\, \mathfrak{g}\otimes \mathbb{C}((\xi_i))\,\longrightarrow\, \mathfrak{n}_i^{-}\oplus
\mathfrak{g}\otimes \mathbb{C}[\xi_i^{-1}]\xi_i^{-1}\,\longrightarrow\, H^1(C,\,\operatorname{Par}(\Pcal)).
\end{equation}
The composition of maps $\rho_i$ in \eqref{eqn:projection1} is the local description of
$\hbar_{\mathcal{O}}$ (defined in \eqref{zl1})
\begin{equation}\label{eqn:obvious1}
\left(\overline{U}(\widehat{\mathfrak{g}}_{p_i})^{opp}\right)^{\oplus n}\,\longrightarrow\,
H^0(N_{G, \bm \tau }^{par,rs},\, \mathcal{D}_{N_{G, \bm \tau }^{par,rs}/S}(\mathcal{O})).
\end{equation}

The operator $\mathscr{D}(\vec{\ell})$ defined in \eqref{dl} gives a relative second order
differential operator $\mathfrak{D}$ on
$N_{G,\bm \tau}^{par,rs}$ which acts on the $i$-th factor by $(T[\underline{l}_i])$ (see \eqref{tl} and \eqref{eqn:projection1}). Thus we have the following diagram
\begin{equation}
\begin{tikzcd}
\bigoplus_{i=1}^n \mathcal{O}_S((\xi_i))\frac{d}{d\xi_i}\arrow[r,"\mathfrak{D}"] \arrow[d,"\theta"]& H^0(N_{G, \bm \tau }^{par,rs},\,
\mathcal{D}^{\leq 2}_{N_{G, \bm \tau}^{par,rs}/S}(\mathscr{L}_{\vec{\lambda}}))\\
H^0(S,\,T_S).
\end{tikzcd}
\end{equation}
We can realize $\mathfrak{D}$ as a projective heat operator by taking a lift of a vector 
field on $S$ to an element of $\bigoplus_{i=1}^n \mathcal{O}_S((\xi_i))\frac{d}{d\xi_i}$. 
Now as described in the previous section, the difference between two lifts can be understood as 
a $\mathcal{O}_S$-module homomorphism $a_{\omega}$. Thus the map $\mathfrak{D}$ descends to 
a projective heat operator, and we will also denote the descended
operator by ${\mathfrak{D}}$. In the rest 
of this section we show that the symbol of $\mathfrak{D}$ is the Hitchin symbol 
$\widetilde{\rho}_{sym}$, which will complete the proof of Theorem \ref{thm:maintheorem}.

\subsection{The parabolic duality pairing and the Hitchin symbol}

Recall that the Cartan-Killing form induces a nondegenerate bilinear form between the sheaves
$$
\kappa_{\mathfrak{g}}\,:\, \operatorname{Spar}(\Pcal)(D)\otimes \operatorname{Par}(\Pcal)
\,\longrightarrow\, \mathcal{O}_C.
$$
Let $D_{p_i}$ be a formal disc around each marked point $p_i$ in $C$, and let $C^*\,=\,
C\backslash \{p_1,\,\cdots,\,p_n\}$ be the complement. Consider the following
open covering: $$C\,=\,C^*\cup \left( \sqcup_{i=1}^n D_{p_i}\right).$$ 
A section of $\operatorname{Spar}(\Pcal)$ restricted to $D_{p_i}$ consists of an element of
$\mathfrak{g}\otimes \mathbb{C}[[\xi_i]]$ whose image under the natural evaluation map
$$\operatorname{ev}_{p_i}\,:\,
\mathfrak{g}\otimes \mathbb{C}[[\xi_i]]\,\longrightarrow\, \mathfrak{g}$$
is contained in the nilradical $\mathfrak{n}_i$ of the parabolic subalgebra
$\mathfrak{p}_i$. Similarly, $\operatorname{Par}(\Pcal)$ consists of sections whose
restriction to any formal disc $D_{p_i}$ has the
property that the image of the evaluation map is in $\mathfrak{p}_i$.  

Let $\{\overline{P}_{i}\}$ be a \v{C}ech cocycle representative in $\prod_{i=1}^n 
\left(\operatorname{Par}(\Pcal) (D_{p_i}^*)\right)$ of a cohomology class of $H^1(C,\, 
\operatorname{Par}({\Pcal}))$ with respect to the covering $C\,=\,C^*\cup \left( \sqcup 
D_{p_i}\right)$.
Here we have $\overline{P}_i \,\in\, \mathfrak{g}\otimes
\mathbb{C}((\xi_i))$, under a trivialization of $\Pcal$ restricted to $D_{p_i}$. Similarly we let
$\{\phi_i d\xi_i\}\,\in\, \mathfrak{g}\otimes \mathbb{C}((\xi_i))d\xi_i$ denote the
restriction of an element of
$H^0(C,\, \operatorname{Spar}(\Pcal)\otimes K_C(D))$ to $\sqcup D_{p_i}^*$.

The natural pairing in \eqref{eqn:naturalcech} takes the form 
\begin{eqnarray}
\label{eqn:naturalcech2} 
H^0(C,\,\operatorname{Spar}(\Pcal)\otimes K_C(D))\otimes H^1(C,\,\operatorname{Par}(\Pcal))
\,\lra\,  \mathbb{C}\\
\{\phi_id_{\xi_i}\} \times  \{\overline{P}_i\}\,\longmapsto\, \sum_{i=1}^n \operatorname{Res}_{\xi_i=0}
\kappa_{\mathfrak{g}}(\phi_i,\overline{P}_i)d\xi_i.
\end{eqnarray}
Now consider a \v{C}ech representative $\vec{\ell}\,=\,\{\underline{l}_i\}
\,\in\, \oplus_{i=1}^n \mathbb{C}((\xi_i))\frac{d}{d\xi_i}$ of a cohomology class in
$H^1(C,\, T_C(-D))$.  Let $\phi$ be a global section of the sheaf
$\operatorname{Spar}(\Pcal)\otimes K_C(D)$. For each $i$, we have
$$\underline{l}_i\,=\,\sum_{m=-m_i}^{\infty} l_{i,m}\xi_i^{m+1}\frac{d}{d\xi_i},$$ and
$\phi$ restricted to $D_{p_i}^*$ is of the form 
$$\phi_id\xi_i\,=\,\sum_{m\in \mathbb{Z}} X_{i,m} \xi_i^{-m-1}d\xi_i \in \mathfrak{g}\otimes \mathbb{C}((\xi_i))d\xi_i.$$

Since the diagram in \eqref{eqn:momentmap} commutes, we can evaluate the symbol
of $\mathfrak{D}$ by computing the following: 
\begin{eqnarray*}
\langle \phi \otimes \phi ,\, \sum_{i=1}^n T[\underline{l}_i]\rangle &=&\sum_{i=1}^n \langle \phi_i\otimes \phi_id\xi_i^{2},\, \sum_{m=-m_i}^{\infty} l_{i,m}L_m\rangle \\
&=& \sum_{i=1}^n\sum_{m=-m_i}^{\infty} l_{i,m} \langle \phi_i \otimes \phi_id\xi_i^{2},\,  L_m\rangle \\
&=&\sum_{i=1}^n\sum_{m=-m_i}^{\infty}l_{i,m} \langle \phi_i \otimes \phi_id\xi_i^2,\, \frac{1}{2(\ell+h^{\vee}(\mathfrak{g}))}\sum_{k\in \mathbb{Z}} \sum_{a=1}^{\dim \mathfrak{g}} \norder {J^a(k)J^a(m-k)}\rangle. \\
\end{eqnarray*}

Now if $\underline{l}_i\,=\,\xi_i^{n_i+1}\frac{d}{d\xi_i}$, we get that
\begin{eqnarray*}
\langle \phi\otimes \phi ,\, \sum_{i=1}^n T[\xi_i^{n_i+1}\frac{d}{d\xi_i}]\rangle &=& \sum_{i=1}^n \langle \phi_i \otimes \phi_id\xi_i^2,\, L_{n_i}\rangle \\
&=& \frac{1}{2(\ell+h^{\vee}(\mathfrak{g}))}\sum_{i=1}^n\sum_{k\in \mathbb{Z}}
\sum_{a=1}^{\dim \mathfrak{g}}\langle \phi_i\otimes \phi_id\xi_i^2,\, \norder {J^a(k)J^a(n_i-k)}\rangle. 
\end{eqnarray*}

If $n_i\,\neq\, 0$, then we get that
\begin{align*}
\langle \phi_i\otimes \phi_id\xi_i^2,\, L_{n_i}\rangle
&=\,\frac{1}{2(\ell+h^{\vee}(\mathfrak{g}))}\sum_{k\in \mathbb{Z}}\sum_{a=1}^{\dim \mathfrak{g}}\langle \phi_i\otimes \phi_i d\xi_i^2,\, J^a(k)J^a(n_i-k)\rangle\\
&=\frac{1}{2(\ell+h^{\vee}(\mathfrak{g}))}\sum_{k\in \mathbb{Z}}\sum_{a=1}^{\dim \mathfrak{g}}\operatorname{Res}_{\xi_i=0}\langle {\phi}_i,\, J^a(k) \rangle d\xi_i.\operatorname{Res}_{\xi_i=0}\langle {\phi}_i,\, J^a(n_i-k) \rangle d\xi_i\\
&=\frac{1}{2(\ell+h^{\vee}(\mathfrak{g}))}\sum_{k\in \mathbb{Z}}\sum_{a=1}^{\dim \mathfrak{g}}\left(\sum_{m\in \mathbb{Z}}\operatorname{Res}_{\xi_i=0}\kappa_{\mathfrak{g}}(X_{i,m},\,J^a)\xi^{k-m-1}d\xi_i\right)\\
&\qquad\qquad \times \left(\sum_{m\in \mathbb{Z}}\operatorname{Res}_{\xi_i=0}\kappa_{\mathfrak{g}}(X_{i,m},\,J^a)\xi^{n_i-k-m-1}d\xi_i\right)\\
&=\frac{1}{2(\ell+h^{\vee}(\mathfrak{g}))}\sum_{k\in \mathbb{Z}}\sum_{a=1}^{\dim \mathfrak{g}}\kappa_{\mathfrak{g}}(X_{i,k},\,J^a)\kappa_{\mathfrak{g}}(X_{i,n_i-k},\,J^a)\\
&=\frac{1}{2(\ell+h^{\vee}(\mathfrak{g}))}\sum_{k\in \mathbb{Z}} \kappa_{\mathfrak{g}}(X_{i,k},\, X_{i,n_i-k}).
\end{align*}

The zero-th Virasoro operator $L_0$ can be rewritten without normal ordering as follows: 
$$L_0\, =\, \frac{1}{2(\ell+h^{\vee}(\mathfrak{g}))}\sum_{a=1}^{\dim \mathfrak{g}}J^aJ^a+
\frac{1}{(\ell+h^{\vee}(\mathfrak{g}))}\sum_{k=1}^{\infty}J^a(-k)J^a(k).$$

Thus we get the following:
\begin{align*}
&\langle \phi_i\otimes \phi_i d\xi_i^2,\, L_0\rangle \\
&=\frac{1}{2(\ell+h^{\vee}(\mathfrak{g}))}\sum_{a=1}^{\dim \mathfrak{g}}\langle \phi_i\otimes \phi_i d\xi_i^2,\, J^aJ^a\rangle+\frac{1}{(\ell+h^{\vee}(\mathfrak{g}))}\sum_{k=1}^{\infty}
\langle \phi_i\phi_id\xi_i^2,\, J^a(-k)J^a(k)\rangle\\
&= \frac{1}{2(\ell+h^{\vee}(\mathfrak{g}))}\sum_{a=1}^{\dim \mathfrak{g}}\left(\sum_{m\in \mathbb{Z}}\operatorname{Res}_{\xi_i=0}\kappa_{\mathfrak{g}}(X_{i,m},\,J^a)\xi_i^{-m-1}d\xi_i\right)^{2}
\\&\quad  +\frac{1}{(\ell+h^{\vee}(\mathfrak{g}))}\sum_{k=1}^{\infty}\sum_{a=1}^{\dim \mathfrak{g}}\Bigg(\left(\sum_{m\in \mathbb{Z}}\operatorname{Res}_{\xi_i=0}\kappa_{\mathfrak{g}}(X_{i,m},\,J^a)\xi_i^{-k-m-1}d\xi_i\right)\\
&\quad  \quad \times\left(\sum_{m\in \mathbb{Z}}\operatorname{Res}_{\xi_i=0}\kappa_{\mathfrak{g}}(X_{i,m},\,J^a)\xi_i^{k-m-1}d\xi_i\right)\Bigg)\\
& 
=\frac{1}{2(\ell+h^{\vee}(\mathfrak{g}))}\sum_{a=1}^{\dim\mathfrak{g}}\kappa_{\mathfrak{g}}
(X_{i,0},\,J^a)\kappa_{\mathfrak{g}}(X_{i,0},\,J^a).\\& 
\quad +\frac{1}{(\ell+h^{\vee}(\mathfrak{g}))}\sum_{k=1}^{\infty}\sum_{a=1}^{\dim 
\mathfrak{g}}\kappa_{\mathfrak{g}}(X_{i,-k},\,J^a)\kappa_{\mathfrak{g}}(X_{i,k},\,J^a)\\
&=\frac{1}{2(\ell+h^{\vee}(\mathfrak{g}))}\kappa_{\mathfrak{g}}(X_{i,0},\,X_{i,0})+\frac{1}{2(\ell+h^{\vee}(\mathfrak{g}))}\sum_{k\in 
\mathbb{Z}\backslash \{0\}}\kappa_{\mathfrak{g}}(X_{i,-k},\,X_{i,k})\\
&=\frac{1}{2(\ell+h^{\vee}(\mathfrak{g}))}\sum_{k\in \mathbb{Z}} \kappa_{\mathfrak{g}}(X_{i,k},\,X_{i,-k}).
\end{align*}

We summarize the above calculations in the following proposition.

\begin{proposition}
For any $1\,\leq\, i \,\neq\, n$, and for any $m_i\,\in\, \mathbb{Z}$,
$$\langle \phi_i\otimes \phi_i d\xi_i^2,\, L_{m_i}\rangle\,=\,
\frac{1}{2(\ell+h^{\vee}(\mathfrak{g}))}\sum_{k\in \mathbb{Z}}
\kappa_{\mathfrak{g}}(X_{i,k},\,X_{i,m_i-k}).$$
\end{proposition}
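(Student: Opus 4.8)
The plan is to compute the principal symbol of the second-order operator $\mathfrak{D}$ through the commutative diagram \eqref{eqn:momentmap}, which identifies the top symbol of $\hbar_{\mathscr{L}_{\vec{\lambda}}}$ with that of $\hbar_{\mathcal{O}}$ and thereby makes the computation independent of the weights. Concretely I would pair the Sugawara expression \eqref{e2} for $L_{m_i}$ against the symmetric cotangent tensor $\phi_i\otimes\phi_i\,d\xi_i^2$ coming from a section $\phi$ of $\operatorname{Spar}(\Pcal)\otimes K_C(D)$, using the duality pairing \eqref{eqn:naturalcech2}. The single computational input used throughout is the residue identity
$$\operatorname{Res}_{\xi_i=0}\kappa_{\mathfrak{g}}(\phi_i,\,J^a\xi_i^k)\,d\xi_i\,=\,\kappa_{\mathfrak{g}}(X_{i,k},\,J^a),$$
together with the completeness relation $\sum_a\kappa_{\mathfrak{g}}(Y,J^a)\kappa_{\mathfrak{g}}(Z,J^a)=\kappa_{\mathfrak{g}}(Y,Z)$ for the orthonormal basis $\{J^a\}$, which collapses the internal sum over $a$ once the two factors of a quadratic Sugawara term have each been contracted against $\phi_i$.

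The argument then splits according to whether $m_i$ vanishes. For $m_i\neq 0$ I would first note that the two modes appearing in $\norder{J^a(k)J^a(m_i-k)}$ commute, since by the bracket formula their commutator is a multiple of $\delta_{m_i,0}\,c$ and hence vanishes; the normal ordering is therefore inert and one may pair the bare product directly. Contracting each factor by the residue identity and summing over $a$ by completeness then gives, with no further subtlety,
$$\langle\phi_i\otimes\phi_i\,d\xi_i^2,\,L_{m_i}\rangle\,=\,\frac{1}{2(\ell+h^{\vee}(\mathfrak{g}))}\sum_{k\in\mathbb{Z}}\kappa_{\mathfrak{g}}(X_{i,k},\,X_{i,m_i-k}).$$

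The case $m_i=0$ is where the genuine work lies, and I expect it to be the main obstacle, because here the normal ordering really does reorganize the double sum rather than acting trivially. The plan is to rewrite $L_0$ in un-normally-ordered form as
$$L_0\,=\,\frac{1}{2(\ell+h^{\vee}(\mathfrak{g}))}\sum_{a=1}^{\dim\mathfrak{g}}J^aJ^a+\frac{1}{(\ell+h^{\vee}(\mathfrak{g}))}\sum_{k=1}^{\infty}J^a(-k)J^a(k),$$
isolating the zero mode from the positive-mode tail, the factor of two being exactly what the reordering produces. Pairing the first piece yields $\tfrac{1}{2(\ell+h^{\vee}(\mathfrak{g}))}\kappa_{\mathfrak{g}}(X_{i,0},X_{i,0})$ after completeness, and the tail yields $\tfrac{1}{\ell+h^{\vee}(\mathfrak{g})}\sum_{k\geq 1}\kappa_{\mathfrak{g}}(X_{i,-k},X_{i,k})$. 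Using the symmetry of $\kappa_{\mathfrak{g}}$ to fold the positive-mode tail into a sum over all $k\neq 0$ recombines the two contributions into $\tfrac{1}{2(\ell+h^{\vee}(\mathfrak{g}))}\sum_{k\in\mathbb{Z}}\kappa_{\mathfrak{g}}(X_{i,k},X_{i,-k})$, which is precisely the claimed formula at $m_i=0$. Assembling the two cases establishes the uniform expression for all $m_i\in\mathbb{Z}$, and this is the form in which the symbol of $\mathfrak{D}$ can be recognized as the Hitchin symbol $\widetilde{\rho}_{sym}$.
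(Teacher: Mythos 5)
Your proposal is correct and follows essentially the same route as the paper: pair the Sugawara modes against $\phi_i\otimes\phi_i\,d\xi_i^2$ via the residue/duality pairing, collapse the basis sum by completeness of $\{J^a\}$, treat $m_i\neq 0$ directly (the paper silently drops the normal ordering there, which your commutator remark $[J^a(k),J^a(m_i-k)]\propto\delta_{m_i,0}\,c$ justifies), and for $m_i=0$ rewrite $L_0$ as the zero-mode term plus the positive-mode tail with coefficient $1/(\ell+h^{\vee}(\mathfrak{g}))$ before folding by symmetry of $\kappa_{\mathfrak{g}}$ --- exactly the paper's computation. No substantive differences.
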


\subsection{Proof of the Main theorem (Theorem \ref{thm:maintheorem})}
\label{sec:mainproof}

Recall that the product
$$
H^1(C,\, T_C(-D))\otimes H^0(C,\, \operatorname{SPar}(\Pcal)\otimes K_{C}(D))\,\longrightarrow\,
H^1(C,\, \operatorname{Par}\Pcal)$$ induces a homomorphism 
\begin{equation}
\widetilde{\rho}_{sym}\,:\, R^1\pi_{n*}\mathcal{T}_{\mathfrak{X}_{G}^{par}/
M_{G, \bm \tau }^{par,rs}}(-D)\,\longrightarrow\, \pi_{n*}\operatorname{Sym}^2\mathcal{T}_{M_{G,\bm \tau}^{par,rs}/S}.
\end{equation}
Consider the \v{C}ech cover of $C$ given by $C\, =\ C^*\cup \left(\sqcup_{i=1}^n D_{p_i}\right)$.
In particular, given any \v{C}ech cohomology class $\{\xi_i^{n_i+1}\frac{d}{d\xi_i}\}$ in
$H^1(C, \,T_C(-D))$, using Serre duality and the identification of
$\operatorname{SPar}(\Pcal)(D)$ with $\operatorname{Par}(\Pcal)^{\vee}$, we get a
symmetric bilinear form on $H^0(C,\, \operatorname{SPar}(\Pcal)\otimes K_C(D))$.

As in the previous section, consider a section $\phi \in H^0(C,\,
\operatorname{SPar}(\Pcal)\otimes K_C(D)$. For each $i$, the section $\phi$ restricted to $D_{p_i}^*$ is of the form 
$$\phi_id\xi_i\,=\, \sum_{m\in \mathbb{Z}} X_{i,m} \xi_i^{-m-1}d\xi_i \in \mathfrak{g}\otimes \mathbb{C}((\xi_i))d\xi_i.$$
 Thus evaluating a cocycle class $\{\xi_i^{n_i+1}\frac{d}{d\xi_i}\}$ against a section $\phi$ written in the \v{C}ech cover as $\{\phi_id\xi_i\}$, we get that 
 \begin{eqnarray*}
 \{\xi_i^{n_i+1}\frac{d}{d\xi_i}\}(\phi)&:=&\operatorname{Res}_{\xi_i=0}\kappa_{\mathfrak{g}}(\phi_id\xi_i\otimes \langle \xi_i^{n_i+1}\frac{d}{d\xi_i},\,\phi_id\xi_i\rangle )\\
 &=&\operatorname{Res}_{\xi_i=0} \kappa_{\mathfrak{g}}(\phi_id\xi_i,\, \sum_{m \in \mathbb{Z}}X_{i,m}\xi_i^{n_i-m})\\
 &=&\operatorname{Res}_{\xi_i=0} \kappa_{\mathfrak{g}}(\sum_{k\in \mathbb{Z}}X_{i,k}\xi_i^{-k-1},
\, \sum_{m \in \mathbb{Z}}X_{i,m}\xi_i^{n_i-m})d\xi_i\\
 &=&\sum_{k\in \mathbb{Z}}\kappa_{\mathfrak{g}}(X_{i,k},\,X_{i,n_i-k}).
 \end{eqnarray*}
We summarize the discussion in this subsection in the following proposition which completes the proof the Theorem \ref{thm:maintheorem}. 

\begin{proposition}\label{prop:equality}
Let $a$ be any rational number and $\phi$ be a faithful representation with Dynkin index $m_{\phi}$. Then the  symbol of the projective heat operator
${\mathfrak{D}}$ acting on $\mathscr{L}_{\vec{\lambda}}^{\otimes a.{m_{\phi}}}\cong\operatorname{Det}^{\otimes a}_{par,\phi}(\bm \tau)$ constructed from
the Sugawara tensor and uniformization (see Section \ref{se8.2})
coincides with $$\frac{1}{2(a\cdot m_{\phi}\cdot \ell+
h^{\vee}(\mathfrak{g}))}\widetilde{\rho}_{sym}.$$ Hence, the projective heat operator $\mathfrak{D}$
and the projective heat operator constructed in \cite{BMW1} via \eqref{eqn:fundamental} coincide.
\end{proposition}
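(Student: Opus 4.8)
The plan is to assemble the two explicit local computations of the preceding subsections into an identification of symbols, and then to invoke the uniqueness of projective heat operators. The conceptual crux, already flagged in the introduction, is that the symbol of $\mathfrak{D}$ does not depend on the highest weights $\vec{\lambda}$, equivalently on the line bundle $\mathscr{L}_{\vec{\lambda}}^{\otimes a m_{\phi}}$. This is precisely the content of the commutative diagram \eqref{eqn:momentmap}: the $m$-th order symbol map factors the Sugawara map $\hbar^{\leq m}_{\mathscr{L}_{\vec{\lambda}}}$ through the classical map $\hbar^{\leq m}_{\mathcal{O}}$, so that the principal symbol of any operator produced from $\hbar_{\mathscr{L}_{\vec{\lambda}}}$ is computed using only the maps $\rho_i$ of \eqref{eqn:projection1} on the structure sheaf. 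In particular the second-order symbol of $\mathfrak{D}$, which acts on the $i$-th factor through the Virasoro operator $T[\underline{l}_i]$, is the image of the \emph{symbol} of the Sugawara tensor and is insensitive to the line bundle.

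First I would fix a regularly stable $\Pcal$, evaluate this symbol on a \v{C}ech class $\{\xi_i^{n_i+1}\tfrac{d}{d\xi_i}\}$ representing an element of $H^1(C,\, T_C(-D))$, and pair the resulting quadratic form against a decomposable element $\phi\otimes\phi$ with $\phi\in H^0(C,\,\operatorname{SPar}(\Pcal)\otimes K_C(D))$. By the Proposition preceding this one, the Sugawara side yields
\[
\langle \phi_i\otimes\phi_i\, d\xi_i^2,\, L_{n_i}\rangle \,=\,
\frac{1}{2(\ell+h^{\vee}(\mathfrak{g}))}\sum_{k\in\mathbb{Z}}\kappa_{\mathfrak{g}}(X_{i,k},\,X_{i,n_i-k}),
\]
while the computation in Section~\ref{sec:mainproof} shows that $\widetilde{\rho}_{sym}$, evaluated on the same class and paired against $\phi\otimes\phi$, returns $\sum_{k}\kappa_{\mathfrak{g}}(X_{i,k},\,X_{i,n_i-k})$ with no prefactor. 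Comparing the two identities term by term gives equality of quadratic forms at level $\ell$; since $\operatorname{Det}^{\otimes a}_{par,\phi}\cong\mathscr{L}_{\vec{\lambda}}^{\otimes a m_{\phi}}$ is obtained by raising $\mathscr{L}_{\vec{\lambda}}$ (level $\ell$) to the power $a m_{\phi}$, the effective level becomes $a m_{\phi}\ell$, the Sugawara denominator becomes $a m_{\phi}\ell+h^{\vee}(\mathfrak{g})$, and the prefactor becomes $\tfrac{1}{2(a m_{\phi}\ell+h^{\vee}(\mathfrak{g}))}$.

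Next I would upgrade this equality of pairings to the stated equality of symbols. Serre duality together with the identification $\operatorname{SPar}(\Pcal)(D)\cong\operatorname{Par}(\Pcal)^{\vee}$ makes $H^0(C,\,\operatorname{SPar}(\Pcal)\otimes K_C(D))$ the cotangent fiber of $M_{G,\bm \tau}^{par,rs}$ at $\Pcal$, so the elements $\phi\otimes\phi$ span $\operatorname{Sym}^2$ of the cotangent space by polarization and pair nondegenerately with $\pi_{e*}\operatorname{Sym}^2\mathcal{T}_{M_{G,\bm \tau}^{par,rs}/S}$. As the \v{C}ech classes $\{\xi_i^{n_i+1}\tfrac{d}{d\xi_i}\}$ generate $R^1\pi_{s*}\mathcal{T}_{\mathcal{C}/S}(-D)$ fiberwise, the two maps agree, giving $\operatorname{symb}(\mathfrak{D})=\tfrac{1}{2(a m_{\phi}\ell+h^{\vee}(\mathfrak{g}))}\widetilde{\rho}_{sym}$.

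Finally, to conclude that $\mathfrak{D}$ coincides with the operator of \cite{BMW1}, I would match this symbol against the one recorded in Proposition~\ref{prop:simple}. That proposition, together with the rewriting \eqref{eqn:f1} of the controlling equation, shows that the metaplectically corrected symbol of the \cite{BMW1} operator is the same scalar multiple of $\widetilde{\rho}_{sym}$ (the apparent discrepancy being the standard normalization of the symmetric-square pairing). Since $\mathfrak{D}$ and the \cite{BMW1} operator are then projective heat operators on $\operatorname{Det}^{\otimes a}_{par,\phi}(\bm \tau)$ with the same symbol, the uniqueness clause of Theorem~\ref{thm:existenceofheat} forces them to coincide. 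I expect the main obstacle to be the careful bookkeeping of constants across three normalizations — the symmetric-square factor of $2$, the level rescaling $\ell\mapsto a m_{\phi}\ell$ induced by the line-bundle power, and the metaplectic shift of Proposition~\ref{prop:simple} — since the conceptual steps, namely weight-independence via \eqref{eqn:momentmap} and nondegeneracy of the Serre pairing, are comparatively robust.
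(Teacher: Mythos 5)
Your proposal follows essentially the same route as the paper: the weight-independence of the symbol via the commutative diagram \eqref{eqn:momentmap}, the residue computation of $\langle \phi_i\otimes\phi_i\,d\xi_i^2, L_{n_i}\rangle$ on the Sugawara side, the parallel \v{C}ech/Serre-duality computation of $\widetilde{\rho}_{sym}$ returning $\sum_k\kappa_{\mathfrak{g}}(X_{i,k},X_{i,n_i-k})$, the level rescaling $\ell\mapsto a\,m_{\phi}\,\ell$ for the line bundle power, and the final appeal to Proposition \ref{prop:simple} together with uniqueness of projective heat operators. This is precisely how the paper assembles the proof from Sections \ref{se8.2}--\ref{sec:mainproof}, so no further comment is needed.
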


\begin{remark}\label{rem:nonample}
If a square-root of $K_{M^{par,s}_{\SL_r, \bm{\alpha}}/S}$ exists, then it follows that the push-forward of the line bundle
$\operatorname{Det}_{par}(\bm \alpha)\otimes K^{\frac{1}{2}}_{M^{par,s}_{\SL_r,\bm{\alpha}}/S}$ produces conformal blocks of 
level $\ell-h^{\vee}(\mathfrak{sl}(r))$, where $h^{\vee}(\mathfrak{g})$ is the dual Coxeter number of a Lie algebra $\mathfrak{g}$. From the calculations in this section, and the 
fact that the tangent and cotangent spaces of the moduli space $M^{par,s}_{\SL_r, \bm{\alpha}}$ are only dependent on the 
flag type of $\bm{\alpha}$, it follows that the symbol ${\rho_{sym}/\ell}$ equals the symbol of the 
differential operator that induces the TUY connection. However, it should be
mentioned that even if 
$K_{M^{par,s}_{\SL_r, \bm{\alpha}}/S}$ has a square-root, the pushforward $\pi_*\left( 
\operatorname{Det}_{par}(\bm \alpha)\otimes K^{\frac{1}{2}}_{M^{par,s}_{\SL_r, \bm{\alpha}}/S}\right)$ may not have any sections. 
\end{remark}
\section{Geometrization of the KZ equation on invariants}\label{sec:geometrizationofKZ}

In this section, we show a geometric construction of the Knizhnik-Zamolodchikov connection 
(KZ). This question was suggested to us by Professor P. Belkale. Let us first recall the 
classical construction of the KZ connection \cite{KZ}.

\subsection{KZ connection} Let $\mathfrak{g}$ be a fixed semisimple Lie algebra, and
let $\vec{\lambda}\,=\,(\lambda_1,\,\cdots,\,\lambda_n)$ be an $n$-tuple of highest
weights. Consider the vector space of invariants of tensor product of representations 
$$A_{\vec{\lambda}}(\mathfrak{g})\,:=\, \operatorname{Hom}_{\mathfrak{g}}(V_{\lambda_1}\otimes \dots \otimes V_{\lambda_n}, \mathbb{C}).$$
The space of invariants sits inside the zero weight space $(V_{\lambda_1}\otimes \dots \otimes V_{\lambda_n})^*_0$ of the dual of the tensor product of representations. 

Let $X_{n}\,=\,\{\bm z\,=\,(z_1,\,\cdots,\, z_n)\,\in\, \mathbb{C}^n\,\,\mid\,\,
z_i\,\neq\, z_j \}$, 
and consider the trivial vector bundle $\mathbb{A}_{\vec{\lambda}}$ on the 
configuration space of points $X_{n}$ whose fiber is $A_{\vec{\lambda}}(\mathfrak{g})$.
It is well known \cite{FSV1, FSV2, TUY:89} 
that the space of conformal blocks
$\mathcal{V}^{\dagger}_{\vec{\lambda}}(C,\mathfrak{g},\ell, \bm z)$ on 
$\mathbb{P}^1\,=\,\mathbb{C}\sqcup \{\infty\}$ with $n$ marked points
$(z_1,\,\cdots,\, z_n)$ for $\mathfrak{g}$ at level $\ell$ and weights
$\vec{\lambda}$   injects into $A_{\vec{\lambda}}(\mathfrak{g})$:
\begin{equation}\label{eqn:inject}
\iota\,:\, \mathcal{V}^{\dagger}_{\vec{\lambda}}(\mathbb{P}^1,\mathfrak{g},\ell, \bm z)
\,\hookrightarrow\, A_{\vec{\lambda}}(\mathfrak{g}).
\end{equation}
This map is actually an isomorphism for $\ell\,\gg\, 0$. Specific bounds for $\ell$
are given in  Belkale-Gibney-Mukhopadhyay (\cite{BGM2, BGM1}).

As in Section \ref{sec:connection}, consider an orthonormal basis
$J^1,\,\cdots,\, J^{\dim \mathfrak{g}}$ of the Lie algebra $\mathfrak{g}$ for
the normalized Cartan-Killing form. Define the Casimir operator $\Omega \,=\,
\sum_{a=1}^{\dim \mathfrak{g}} J^aJ^a$. For pairs of integers $1\,\leq\, i\,\neq\,
j\,\leq\, n$, and vectors $v_1\otimes \cdots \otimes v_n\,\in\,
V_{\lambda_1}\otimes \cdots \otimes V_{\lambda_n}$, let 
$$\Omega_{i,j}(v_1\otimes\cdots \otimes v_n)\,:=\,
\sum_{a=1}^{\dim \mathfrak{g}} v_1\otimes\cdots \otimes J^av_i\otimes \cdots \otimes J^av_j\otimes \cdots \otimes v_n .$$
For any complex number $\kappa\,\neq\, 0$, the formula 
\begin{equation}
\label{eqn:KZeqn}
\left(\nabla^{(\kappa)}_{\frac{\partial}{\partial z_i}}(f\otimes
\langle \Psi| \right)(|\Phi\rangle)\,:=\,
\frac{\partial f}{\partial z_i} \langle \Psi | \Phi\rangle-\frac{f}{\kappa} \sum_{j\neq i}
\frac{\langle \Psi|\Omega_{i,j}(\phi)\rangle}{z_i-z_j},
\end{equation}
defines a flat connection on $(V_{\lambda_1}\otimes \cdots \otimes V_{\lambda_n})^*_0\otimes
\mathcal{O}_{X_n}$ 
over $X_n$ that preserves the subbundle $\mathbb{A}_{\vec{\lambda}}$.  
Hence, its monodromy  gives a representation of the pure braid group $\pi_1(X_n,\,\bm z)$. 

In this discussion, we restrict ourselves to the case 
where $\kappa\,=\,\ell+ h^{\vee}(\mathfrak{g})$, and 
$\kappa_{\mathfrak{g}}(\lambda_i,\,\theta_{\mathfrak{g}})\,<\,1$ for all $i$.
In this case
it is known that  the connection $\nabla^{(\ell+h^{\vee}(\frak{g}))}$ preserves the bundle $\mathcal{V}_{\vec{\lambda}}^{\dagger}(\mathfrak{g},\ell)$
of conformal blocks and it is equal to  the TUY/WZW
connection \cite{FSV2, FSV1, TUY:89}.

\subsection{Invariants as global sections}

As in Section
\ref{sec:parabolicmoduli} consider the moduli stack of quasi-parabolic
bundles $\mathcal{P}ar_{G}(\mathbb{P}^1, \bm z, \bm \tau)$ of local type
$\bm \tau$ on $\mathbb{P}^1$, where $\bm \tau$ and $\vec{\lambda}$ are
related  by the usual exponential map as before. Consider the open substack $\mathcal{P}ar^{c}_G(\mathbb{P}^1, \bm z, \tau)$ of $\mathcal{P}ar_G(\mathbb{P}^1, \bm z, \bm \tau)$ parametrizing quasi-parabolic bundle on $\mathbb{P}^1$ whose underlying bundle is trivial. By construction, we have an isomorphism of $\mathcal{P}ar_G^c(\mathbb{P}^1, \bm z, \bm \tau)$ with the quotient stack 
\begin{equation}
[\left(G/P_1\times \cdots \times G/P_n\right)/G],
\end{equation}
where $P_1,\,\cdots,\, P_n$ are the parabolics determined by $\tau_1,\,\cdots,\, \tau_n$
and $G$ acts diagonally on the product of partial flag varieties.

Let $\mathscr{L}_{\vec{\lambda}}$ be the Borel-Weil-Bott line bundle on
$\mathcal{P}ar_G(\mathbb{P}^1, \bm z, \bm \tau)$, and consider the restriction of $\mathscr{L}_{\vec{\lambda}}$ to $\mathcal{P}ar_G^{c}(\mathbb{P}^1, \bm z, \bm \tau)$. 
We get a natural map 
\begin{equation}\label{eqn:res1}
H^0(\mathcal{P}ar_G(\mathbb{P}^1, \bm z, \bm \tau),\,\mathscr{L}_{\vec{\lambda}})
\,\lra\, H^0(\mathcal{P}ar_G^{c}(\mathbb{P}^1, \bm z, \bm \tau),\, \mathscr{L}_{\vec{\lambda}}).
 \end{equation}
 Now the restriction of $\mathscr{L}_{\vec{\lambda}}$ to $[\left(G/P_1\times \cdots \times G/P_n\right)/G]$ is  $L_{\lambda_1}\boxtimes \dots \boxtimes L_{\lambda_n}$,
 where the  $L_{\lambda_i}$ are the natural homogeneous line bundles on
 $G/P_{i}$ determined by the weights $\lambda_i$. Moreover, by the
 Borel-Weil theorem, we have $H^0(G/P_i,\, L_{\lambda_i})\,=\,V_{\lambda_i}^*$.
 Thus, from the restriction we get the natural commutative diagram 
 \begin{equation}
 \begin{tikzcd}
 H^0(\mathcal{P}ar_G(\mathbb{P}^1, \bm z, \bm \tau), \mathscr{L}_{\vec{\lambda}})\arrow[d,"\cong"]\arrow[r,"\operatorname{res}"]& H^0(\mathcal{P}ar_G^{c}(\mathbb{P}^1, \bm z, \bm \tau), \mathscr{L}_{\vec{\lambda}})\arrow[r, equal]\arrow[d,"\cong"]& \left(\bigotimes_{i=1}^nH^0(G/P_i, L_{\lambda_i})\right)^{\mathfrak{g}}\arrow[d,"\cong"]\\
 \mathcal{V}_{\vec{\lambda}}^{\dagger}(\mathbb{P}^1, \mathfrak{g}, \ell,\bm z) \arrow[r, hook,"\iota"]& \operatorname{Hom}_{\mathfrak{g}}(V_{\lambda_1}\otimes \cdots \otimes V_{\lambda_n}, \mathbb{C})\arrow[r,equal] & \big(V^*_{\lambda_1}\otimes \cdots \otimes V^*_{\lambda_n}\big)^{\mathfrak{g}}.
 \end{tikzcd}
 \end{equation}
Here the left vertical isomorphism is due to Laszlo-Sorger \cite{LaszloSorger:97};
the diagram was used in \cite{BGM2}.  Now it follows that the complement of $\mathcal{P}ar_G^c(\mathbb{P}^1, \bm z, \bm \tau)$ in $\mathcal{P}ar_G(\mathbb{P}^1, \bm z, \bm \tau)$  is just the ordinary theta divisor. 

\subsection{Differential operators}

Recall the notion of a {\em good
 stack} from Beilinson-Drinfeld \cite{BD1}: 
 an equidimensional algebraic stack $\mathcal{Y}$ over complex numbers is {\em good} if the
dimension of $\mathcal{Y}$ is half the dimension of the cotangent stack
$\mathcal{T}^{\vee}_\mathcal{Y}$. Let $\mathcal{Y}_{sm}$ be the smooth topology of
$\mathcal{Y}$. For any object $S\,\in\, \mathcal{Y}_{sm}$ and a smooth $1$-morphism
$\pi_S \,\in\, \mathcal{Y}$, we have the exact sequence 
$$
\mathcal{T}_{S/\mathcal{Y}} \,\lra\, \mathcal{T}_{S}\,\longrightarrow\,
\pi_S^*\mathcal{T}_Y \,\lra\, 0.
$$
Consider the sheaf of differential operators $\mathcal{D}_S$ on $S$ and the left ideal 
$I\,=\,\mathcal{D}_{S}\mathcal{T}_{S/\mathcal{Y}}\,\subset\, \mathcal{D}_S$. Set 
$\mathcal{D}_{\mathcal{Y}}(S)\,:=\,\mathcal{D}_S/I,$. This $\mathcal{D}_{\mathcal{Y}}$ is an 
$\mathcal{O}_{\mathcal{Y}}$ module along with a natural filtration such that 
$\operatorname{Sym} \mathcal{T}_{\mathcal{Y}}\,\cong\, \operatorname{gr} 
\mathcal{D}_{\mathcal{Y}}$. The above also works for differential operators twisted by a 
line bundle.
 
Since the nilpotent cone of the moduli space of parabolic Higgs bundles is isotropic of exactly 
half  the dimension \cite{BaragliaKamgarpourVarma:19, Faltings:93,LogaresMartens:10}, 
 it follows that the stack $\mathcal{P}ar_{G}(\mathbb{P}^1, \bm z, \bm
 \tau)$ is good.  Moreover, since both $\mathcal{P}ar^{c}_{G}(\mathbb{P}^1,
 \bm z, \bm \tau)$ and $\mathcal{P}ar^{rs}_{G}(\mathbb{P}^1, \bm z, \bm
 \tau)$ are quotients of a smooth scheme by a reductive group, they are also good.
 
Now we know that the line bundle $\mathscr{L}_{\vec{\lambda}}$ descends to a line bundle
on $\mathcal{P}ar_{G}(\mathbb{P}^1, \bm z, \bm \tau)$. 
The construction of the projective heat operator (cf.\ Definition \ref{def:addingadj}) with 
symbol \eqref{eqn:fundamental} gives a second order differential operator $\mathbb{D}$ on 
$\Lscr_{\vec{\lambda}}$ over the moduli stack $\mathcal{P}ar^{rs}_{G}(\mathbb{P}^1, \bm z, 
\bm \tau)$. Since the sheaf $\mathcal{D}^{\leq 2}(\mathscr{L}_{\vec{\lambda}})$ is 
coherent and $\mathcal{P}ar^{rs}_{G}(\mathbb{P}^1, \bm z, \bm \tau)$ has complement of 
dimension at least two (provided $\bm \tau$ satisfies the conditions in the statement of 
Theorem \ref{thm:uniformverlinde}) applying Hartogs theorem, we get a differential operator on 
$\mathscr{L}_{\vec{\lambda}}$ over the entire stack $\mathcal{P}ar_G(\mathbb{P}^1,\bm z, 
\tau)$, which we will still denote by $\mathbb{D}$.
 
 Recall that via the uniformization theorem and the Sugawara
 construction, we have a degree two differential operator $\mathscr{D}$ on
 $\mathscr{L}_{\vec{\lambda}}$, which by Theorem \ref{thm:maintheorem}
 agrees with $\mathbb{D}$. Since the Sugawara construction restricted to
 the open substack $\mathcal{P}ar_G^{c}(\mathbb{P}^1, \bm z, \bm \tau)$
 induces the KZ connection, we have the following corollary obtained by
 restricting $\mathbb{D}$ to $\mathcal{P}ar_G^{c}(\mathbb{P}^1,\bm z, \bm
 \tau)$. 

\begin{corollary}\label{cor:geoKZ}
Let $\pi^c\,:\, \mathcal{P}ar_G^{c}(\bm \tau)\,\to\, X_n$ be the relative open substack of quasi-parabolic bundles whose underlying bundle is trivial. Then
the heat operator $\mathbb{D}$ induces a flat connection on the vector bundle $\pi^{c}_*\mathscr{L}_{\vec{\lambda}}$ over $X_n$ whose fiber at a point $\bm z$ is $H^0(\mathcal{P}ar^{c}_{G}(C,\bm z, \bm \tau), \mathscr{L}_{\vec{\lambda}})$. 
Moreover, the natural identification of $\pi_{*}^c\mathscr{L}_{\vec{\lambda}}$ with $\mathbb{A}_{\vec{\lambda}}$ is flat for the geometric connection on $\pi_{*}^c\mathscr{L}_{\vec{\lambda}}$ and the Knizhnik-Zamolodchikov connection on $\mathbb{A}_{\vec{\lambda}}$.
 \end{corollary}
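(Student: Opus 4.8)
The plan is to assemble three ingredients, two of which are already in place: the quotient-stack description of the trivial-bundle locus, the coincidence of the heat operator $\mathbb{D}$ with the Sugawara operator $\mathscr{D}$ supplied by Theorem \ref{thm:maintheorem}, and the genus-zero degeneration of Sugawara's construction on primary fields into the KZ kernel. First I would record that the bundle in question is canonically trivial. Since $\mathcal{P}ar^c_G(\mathbb{P}^1, \bm z, \bm\tau) = [(G/P_1 \times \cdots \times G/P_n)/G]$ is, up to canonical isomorphism, independent of the positions $\bm z$, the commutative diagram of the preceding subsection together with the Borel--Weil identifications $H^0(G/P_i,\, L_{\lambda_i}) = V_{\lambda_i}^*$ gives
\[
H^0(\mathcal{P}ar^c_G(\mathbb{P}^1, \bm z, \bm\tau),\, \mathscr{L}_{\vec\lambda}) \;=\; \Big(\bigotimes_{i=1}^n V_{\lambda_i}^*\Big)^{\mathfrak{g}} \;=\; A_{\vec\lambda}(\mathfrak{g}),
\]
naturally in $\bm z$. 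Hence $\pi^c_*\mathscr{L}_{\vec\lambda}$ is the trivial bundle $\mathbb{A}_{\vec\lambda}$, which furnishes the ``natural identification'' in the statement.

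Next I would restrict the heat operator. The operator $\mathbb{D}$ produced on $\mathscr{L}_{\vec\lambda}$ over $\mathcal{P}ar_G(\mathbb{P}^1, \bm z, \bm\tau)$ --- constructed on the regularly stable locus $\mathcal{P}ar^{rs}_G$ and extended to the full stack by Hartogs, the complement having codimension at least two --- restricts to the open substack $\mathcal{P}ar^c_G$, where, as a section of $\mathcal{W}(\mathscr{L}_{\vec\lambda})/\mathcal{O}$ over $(\pi^c)^*\mathcal{T}_{X_n}$, it induces a projective connection on $\pi^c_*\mathscr{L}_{\vec\lambda}$. In genus zero this projective connection admits a canonical honest lift: with the global coordinate $z$ on $\mathbb{P}^1$ and the bidifferential $\omega = dz\,dw/(z-w)^2$, the associated projective connection $S_\omega$ vanishes, so the correction term $a_\omega(\vec\ell)$ in \eqref{k1} is zero and the projective heat operator promotes to an honest heat operator. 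This yields an honest connection on $\mathbb{A}_{\vec\lambda}$.

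The heart of the argument is to identify this connection with \eqref{eqn:KZeqn}. By Proposition \ref{prop:equality} and Theorem \ref{thm:maintheorem}, $\mathbb{D}$ agrees with the operator $\mathscr{D}$ built from the Sugawara tensor and uniformization; since both are differential operators on $\mathscr{L}_{\vec\lambda}$ over $\mathcal{P}ar_G$ that coincide on the dense open $\mathcal{P}ar^{rs}_G$, they coincide over the whole stack, and in particular after restriction to $\mathcal{P}ar^c_G$. It then remains to restrict $\mathscr{D}$ explicitly to the trivial-bundle locus and match it to the KZ operator. Here I would use the local description of Section \ref{se8.2}, in which $\mathscr{D}(\vec\ell)$ acts on the $i$-th factor through $T[\underline{l}_i]$: lifting the base field $\partial/\partial z_i$ to a meromorphic vector field on $\mathbb{P}^1$ with a simple translation singularity at $z_i$, the Sugawara operator acting on the highest-weight vectors attached to the $G/P_i$ collapses, via the residue pairing and the vanishing of the total diagonal $\mathfrak{g}$-charge on invariants, to $\frac{1}{\ell + h^\vee(\mathfrak{g})}\sum_{j\neq i}\frac{\Omega_{i,j}}{z_i - z_j}$. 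This is exactly \eqref{eqn:KZeqn} with $\kappa = \ell + h^\vee(\mathfrak{g})$, and flatness then follows either from Theorem \ref{th1} or from the classical flatness of the KZ connection.

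I expect the main obstacle to be precisely this last reduction: showing that the second-order Sugawara operator, restricted to $\mathcal{P}ar^c_G$, degenerates to the first-order Casimir kernel of KZ. One must verify that the normal-ordering and central contributions in \eqref{e2} either cancel along the trivial-bundle locus or are absorbed into the $\ell + h^\vee(\mathfrak{g})$ normalization, and that the residue bookkeeping over $\mathbb{P}^1$ --- summing the global meromorphic pairing of a section against the lifted vector field --- produces exactly the simple poles $1/(z_i - z_j)$ carrying the pairwise Casimir $\Omega_{i,j}$. This is the genus-zero specialization of the symbol computation of Section \ref{se8.2}, now tracked at the level of the full operator rather than its principal symbol.
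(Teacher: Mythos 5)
Your proposal is correct and follows essentially the same route as the paper: identify $\pi^c_*\mathscr{L}_{\vec\lambda}$ with $\mathbb{A}_{\vec\lambda}$ via the quotient-stack description and Borel--Weil, restrict the Hartogs-extended operator $\mathbb{D}$ to $\mathcal{P}ar^c_G$, use Theorem \ref{thm:maintheorem} to replace it by the Sugawara operator $\mathscr{D}$, and conclude via the genus-zero identification of the Sugawara connection with KZ. The one step you flag as the ``main obstacle''---the degeneration of the Sugawara operator on the trivial-bundle locus to the Casimir kernel $\frac{1}{\ell+h^\vee(\mathfrak{g})}\sum_{j\neq i}\Omega_{i,j}/(z_i-z_j)$---is not re-derived in the paper but simply quoted as the classical fact that $\nabla^{(\ell+h^\vee(\mathfrak{g}))}$ agrees with the TUY/WZW connection in genus zero (citing Tsuchiya--Ueno--Yamada and Feigin--Schechtman--Varchenko), so your explicit residue computation, while sound in outline, is optional rather than a missing ingredient.
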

\appendix
\section{Moduli spaces of parabolic bundles} \label{sec:moduli}

In this section, we briefly recall the basic notion of parabolic bundles and the natural 
line bundles on their moduli spaces.

Let $C,\bm p$ be as in Section \ref{se3.1}. Let $E$ be a vector bundles on $C$. A {\em 
	quasi-parabolic structure} on $E$ at a point $p\,\in\, \bm{p}$ is a strictly decreasing 
flag
$$E_{p}\, =\, F^1E_{p}\,\supset\, F^2E_{p}\,\cdots\, \supset\,\cdots\, \supset\, F^{k_{p}}E_{p}\,\supset\,
F^{k_{p}+1}E_x\,=\,0.$$
of linear subspaces 
in $E_p$. The above integer $k_{p}$ is the length of the flag at $p$, and the tuple
$$(r_1(F^{\bullet}E_{p}),\,\cdots,\, r_{k_{p}}(F^{\bullet}E_{p}))$$
records the jumps in the dimension of the subspaces and is defined by 
$$r_j(F^{\bullet}E_{p})\,:=\,\dim F^j E_p-\dim F^{j+1}E_p.$$
A {\em parabolic structure} on $E$ at $p$ is a quasi-parabolic structure
as above together with a sequence of rational numbers
$$0\,\leq\, \alpha_1 \,<\,\alpha_2\, <\, \cdots\, <\, \alpha_{k_p} < 1$$
known as the {\em weights}. A parabolic bundle $(E,\, {\bm \alpha},\, \bm r)$ on $C$
with parabolic divisor $\bm p$ is a vector bundle $E$ on $C$ along
with parabolic structure over the points in $\bm p$. Using the weights $\bm \alpha$, the {\em parabolic degree} of $E$
is defined to be
$$
{\rm pdeg}(E)\,:=\,\deg (E)+ \sum_{i=1}^n 
\sum_{j=1}^{k_{p_i}}r_j(F^{\bullet}(E_{p_i}))\alpha_j(F^{\bullet}(E_{p_i})).$$ 
Stable and semistable parabolic bundles are defined using the 
parabolic degree (see \cite{MehtaSeshadri}). Mehta and Seshadri constructed the moduli space $M_{\bm \alpha}^{par}$ of 
semistable parabolic bundles \cite{MehtaSeshadri}. 

We now discuss some natural ample line bundles on $M_{\bm \alpha}^{par}$, following \cite{BiswasRaghavendra}. 
Let ${\bm \alpha}$ be a fixed set of weights for fixed flag type $\bm r$, and let $(E,\, \bm {\alpha},\, \bm{r})$ be a parabolic 
bundle on $(C,\, \bm{p})$. 
Define the parabolic Euler characteristic
$$\chi_{\bm{p}}(E)\,:=\,\chi(E)-\sum_{i=1}^n \sum_{j=1}^{k_{p_i}}r_j(F^{\bullet}(E_{p_i}))\alpha_j(F^{\bullet}(E_{p_i})).$$

Let $\mathcal{E}$ be a family of parabolic bundles on $C$, parametrized by a scheme $T$, of rank $r$, weight 
${\bm{\alpha}}$ and flag type ${\bm r}$. For each point $p_i$, we have a string of rational numbers
$$\bm\alpha_{p_i}\, =\, (0\,\leq\,\alpha_1(p_i)\,<\,\cdots\, <\,\alpha_j(p_i)\, <\,\cdots\,<
\,\alpha_{k_{p_i}}(p_i)\,<\,1)$$ which are the parabolic weights.
Observe that the parabolic Euler characteristic $\chi_{\bm {p}}$ remains constant in a connected family.

Let $\ell$ be the least common multiple of all denominators of all the rational numbers 
appearing in $\bm \alpha$.

\begin{definition}
	The {\em parabolic determinant bundle of level $\ell$} on $M_{\bm
		{\alpha}}^{par}$ is the element of the rational
	Picard group $\operatorname{Pic}(T)_{\mathbb{Q}}$
	given by
	\begin{equation}
		\operatorname{ParDet}(\mathcal{E},\bm{\alpha})\, :=\,
		\operatorname{Det}(\mathcal{E})^{ \ell}\bigotimes\left( \bigotimes_{i=1}^n\left(\bigotimes_{j=1}^{k_{p_i}}
		\det \operatorname{Gr}^j \mathscr{F}_{\bullet, p_i}(\mathcal{E}_{|T\times p_i})\right)^{\ell.\alpha_j(p_i)}\right)
		\otimes \left(\det \mathcal{E}_{|T\times p_0}\right)^{\frac{\ell.\chi_{\bm{p}}(\mathcal{E})}{r}},
	\end{equation}
	where $p_0$ is a fixed point of $C$, and $\Gr^j$ denotes the $j$-th graded
	piece  of the filtration $\mathscr{F}_{\bullet, p_i}$  on
	$\mathcal{E}_{|T\times p_i}$ (cf.\  \cite[Prop.\ 4.5]{BiswasRaghavendra}). 
\end{definition}

Let $M_{\SL_r, \bm{\alpha}}^{par,ss}$ be the moduli space of semistable parabolic $\SL_r$ 
bundles or equivalently parabolic bundles with trivialized determinant. Then
$\operatorname{ParDet}(\mathcal{E},\bm {\alpha})$ descends to a line bundle on 
$M_{\SL_r, \bm {\alpha}}^{par,ss}$, which will be denoted by 
$\operatorname{Det}_{par}(\bm{\alpha})$.

\subsection{Parabolic $G$ bundles}\label{sec:parabolicmoduli}

We shall follow the notation  in \cite[App. A]{BMW1} and refer the reader there for more 
details. Consider the fundamental alcove $\Phi_0$, and let $\bm \tau\,=\,(\tau_1,\,\cdots,\, 
\tau_n)$ be a choice of $n$-tuple of weights in $\Phi_0$ which will be referred to as {\em 
	parabolic weights}.

\begin{definition}Let $G$ be a  connected complex reductive group. A parabolic structure on a
	principal $G$--bundle $E\,\to\, C$ with parabolic structures at the points
	$\bm p\,=\,(p_1,\,\cdots,\, p_n)$ is a choice of parabolic weights $\bm \tau$ along
	with a section $\sigma_i$ of the homogeneous space $E_{p_i}/P(\tau_i)$, for each
	$1\,\leq\, i\,\leq\, n$, where $P(\tau_i)$ is the standard parabolic associated to
	$\tau_i$. Throughout this paper we will assume that $\theta_{\mathfrak{g}}(\tau_i)\,<\,1$
	for all $1\,\leq\, i \,\leq\, n$. 
\end{definition}

We observe that when $G\,=\,\GL_r$, the associated bundle constructed via the standard 
representation of $\GL_r$ recovers the notion of parabolic bundles and parabolic weights as 
in the beginning of the present section. The 
notions of stability and semistability for parabolic $G$-bundles appear in the work of 
Bhosle-Ramanathan \cite{DesaleRamanathan}; for $G\,=\,\GL_r$ they coincide
with the notions of stable and semistable parabolic vector bundles.

Let $\bm\tau$ be an $n$-tuple of parabolic weights in the interior of the Weyl alcove of 
$G$. The corresponding moduli space $M^{par,ss}_{G,\bm\tau}$ (respectively, 
$M^{par,s}_{G,\bm\tau}$) of semistable (respectively, stable) parabolic $G$-bundles was constructed 
in \cite{BalajiBiswasNagaraj}. These moduli spaces are 
normal irreducible quasi-projective varieties. The smooth locus of $M^{par,ss}_{G,\bm\tau}$ 
is denoted by $M^{par,rs}_{G,\bm\tau}$ and it parametrizes regularly stable parabolic 
bundles \cite{BiswasHoffmann:12} or equivalently stable parabolic bundles with minimal automorphisms. 

Let $\iota\,:\, G\,\to\, G'$ be an embedding of connected semisimple groups. 
This homomorphism $\iota$ produces a map $M^{par,ss}_{G,\bm\tau}\,\to\, 
M^{par,ss}_{G',\bm\tau'}$ which is a finite morphism. The weights $\bm \tau'$ and $\bm 
\tau$ are related by $\iota$. This plays a key role in construction of the moduli spaces.
In fact, choosing an appropriate representation of the group $G$, one can reduce the
question of construction to the corresponding question on parabolic vector bundles.

\begin{remark}
	Let $\mathcal{C}\,\to\, S$ be a family of smooth
	curves with $n$ disjoint sections. We will denote the corresponding semistable and
	regularly stable moduli spaces also by $M^{par,ss}_{G,\bm \tau}$ and
	$M^{par,rs}_{G,\bm\tau}$ respectively. When there is a scope of
	confusion, for any $n$-pointed smooth curve $(C,\,\bm p)$, we will use the notation
	$M^{par,ss}_{G,\bm\tau}(C,\,\bm p)$ and $M^{par,rs}_{G,\bm\tau}(C,\,\bm p)$ respectively.
\end{remark}

\subsection{Parabolic bundles as equivariant bundles}\label{sec:orbcorr}

We now discuss parabolic bundles from the point of view of equivariant bundles. We refer the 
reader to  \cite{BalajiSeshadri}, 
\cite{Biswasduke}, \cite{SeshadriI}, and \cite[App. B]{BMW1} for more details. This 
was used in \cite{BMW1} to construct a Hitchin type connection for parabolic 
bundles and it will be crucial here as well. 

\begin{definition}
	Let $p\,:\,\widehat{C}\,\to\, C$ be a Galois cover of curves with Galois group 
	$\Gamma$. A $(\Gamma,\,G)$-bundle is a principal $G$-bundle $\widehat{E}$ on 
	$\widehat{C}$ together with a lift of the action of $\Gamma$ on $\widehat{E}$ as bundle 
	automorphism that commutes with the action of $G$ on $\widehat{E}$.
\end{definition}

Assume that the map $p\,:\, \widehat{C}\,\to\, C$ is ramified over $p_i\, 
\in\,C$, $1\,\leq\, i\,\leq\, n$. Let $\Gamma_{q_i}\,\subset\, \Gamma\,=\, {\rm Gal}(p)$ be 
the isotropy subgroup for some $q_i$ over $p_i$. A $(\Gamma,\, G)$--bundle on a formal disc 
around $q_i$ is uniquely determined by the conjugacy class of a homomorphism 
$\rho_i\,:\,\Gamma_{q_i}\,\to\, G$ given by the action of $\Gamma_{q_i}$ on the 
fiber of the principal $G$--bundle over the point $q_i$ (see 
\cite{BalajiSeshadri,TelWood}). Fix a generator $\gamma_i$ of the cyclic group 
$\Gamma_{q_i}$. Now consider a string of parabolic weights 
${\bm\tau}\,=\,(\tau_1,\,\cdots,\, \tau_n)$ such that $\rho_i(\gamma_i)$ is conjugate to 
$\tau_i$ for each $1\,\leq\, i\,\leq\, n$. We will refer to this $\bm \tau$ as the local 
type of a $(\Gamma,\, G)$--bundle.

The notions of stability and semistability for $(\Gamma,\, G)$--bundles are similar 
to those for the usual principal $G$-bundles; more precisely, the inequality is checked only for the
$\Gamma$ equivariant reductions of the structure group to a parabolic subgroup of $G$
(\cite{BalajiBiswasNagaraj, Ramanathan:75}). Let $M^{\bm \tau, ss}_{G}$ (respectively,
$M^{\bm \tau,s}_{G}$) denote the moduli spaces of semistable (respectively, stable)
$(\Gamma,\,G)$ bundles of local type $\bm \tau$.

Recall the isomorphism $\nu_{\gfrak} : \gfrak^\vee\isorightarrow \gfrak$
from the Killing form.
Given a string of parabolic weights $\bm \tau\,=\,(\tau_1,\,\cdots, \,\tau_n)$, 
choose a minimal integer $\ell$ such that $\exp\left( 2\pi\sqrt{-1}\left(\ell\cdot 
\nu_{\mathfrak{g}}(\tau_i)\right)\right)\,=\,1$. Then by \cite{Namba, Selberg}, we can find a 
ramified Galois cover $p\,:\,\widehat{C}\,\to\, C$ with ramification
exactly over $n$-points $\{p_i\}_{i=1}^n$ whose
isotropy at any ramification point is a cyclic group of order $\ell$. From now on we will 
restrict ourselves only to such Galois covers. The following theorem is due to 
\cite{BalajiBiswasNagaraj, BalajiSeshadri, Biswasduke}.

\begin{theorem}\label{thm:orbifold}
	Consider the moduli stack $\mathcal{B}un^{\bm \tau}_{\Gamma, G}(\widehat{C})$ of $(\Gamma,
	\,G)$--bundles of fixed local type $\bm \tau$. The invariant direct image functor identifies
	the stack $\mathcal{B}un^{\bm \tau}_{\Gamma, G}(\widehat{C})$ with the moduli stack
	$\mathcal{P}ar_{G}(C,\bm{p},\bm \tau)$ of quasi-parabolic bundles of flag type $\bm \tau$. 
	Moreover, the invariant push-forward functor also induces an isomorphism between the moduli
	spaces $M^{par,ss}_{G,\bm\tau}$ (respectively, $M^{par,s}_{G,\bm\tau}$) and
	$M^{\bm \tau,ss}_{G}$ (respectively, $M^{\bm \tau,s}_{G}$).
\end{theorem}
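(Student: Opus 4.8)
The plan is to establish the stack-level equivalence first and then deduce the isomorphism of moduli spaces by comparing the two stability conditions and invoking the GIT constructions recalled above. Throughout, I would reduce the case of a general reductive $G$ to the case $G=\GL_r$ by choosing a faithful representation $\phi\colon G\hookrightarrow \GL(V)$: since the invariant direct image functor is compatible with the associated-bundle construction and with reduction of structure group, a $(\Gamma,G)$-bundle is the same datum as a $(\Gamma,\GL(V))$-bundle together with a $\Gamma$-equivariant reduction of structure group, and correspondingly on the $C$ side a parabolic $G$-bundle is a parabolic $\GL(V)$-bundle with a compatible reduction. Thus it suffices to establish the correspondence for vector bundles and then carry the reductions along functorially.

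For the equivalence of stacks in the $\GL(V)$ case, the argument is local-to-global. First I would treat the local model: let $\widehat{\Delta}\to\Delta$ be the cyclic cover $w\mapsto w^\ell=z$ of a formal disc, with Galois group $\Gamma_q=\mathbb{Z}/\ell=\langle\gamma\rangle$. A $(\Gamma_q,\GL(V))$-bundle on $\widehat{\Delta}$ is, up to isomorphism, determined by the conjugacy class of the linear action of $\gamma$ on the fiber over the fixed point, i.e. by a semisimple element of finite order which we normalize to $\exp(2\pi\sqrt{-1}\,\nu_{\gfrak}(\tau_i))$. Taking the $\Gamma$-invariant direct image and decomposing into the $\ell$ characters of $\Gamma_q$ produces a vector bundle on $\Delta$ whose fiber at $0$ carries the filtration by eigenspaces of $\gamma$; the rational weights recorded by $\tau_i$ are exactly the parabolic weights. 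I would verify that this is an equivalence of groupoids on the disc by exhibiting the inverse (the canonical Hecke/root-stack modification attached to a weighted flag). Away from $\bm p$ the map $p$ is finite \'etale Galois, so $(\Gamma,\GL(V))$-bundles correspond to vector bundles by ordinary Galois descent. Gluing the \'etale description over $C\setminus\bm p$ to the local models at the $p_i$ by a Beauville--Laszlo type formal-disc argument yields the equivalence $\mathcal{B}un^{\bm\tau}_{\Gamma,\GL(V)}(\widehat{C})\,\simrightarrow\,\mathcal{P}ar_{\GL(V)}(C,\bm p,\bm\tau)$, functorially in families; here one checks that invariant pushforward commutes with base change because $|\Gamma|$ is invertible, so that one genuinely obtains a morphism of stacks.

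Next I would compare stability. Both notions are tested on reductions to parabolic subgroups: $(\Gamma,G)$-stability is checked against $\Gamma$-equivariant reductions, while parabolic stability is checked against reductions compatible with the quasi-parabolic structure. Under the equivalence these two classes of reductions match, and the essential computation is that for such a reduction the equivariant degree on $\widehat{C}$ equals $|\Gamma|$ times the corresponding parabolic degree on $C$; this is the global shadow of the normalization already used in Lemma~\ref{lem:normalization}. Consequently the $\Gamma$-equivariant slope inequality for $\widehat{E}$ holds if and only if the parabolic slope inequality holds for its image, giving $(\Gamma,G)$-semistable if and only if parabolic semistable, and likewise for the stable (equivalently regularly stable) loci.

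Finally, the isomorphism of moduli spaces follows formally: the equivalence of stacks restricts to an equivalence of the open substacks of semistable (resp.\ stable) objects, and since both $M^{par,ss}_{G,\bm\tau}$ and $M^{\bm\tau,ss}_G$ were constructed as GIT/coarse moduli of these substacks, the universal property produces the asserted isomorphisms, compatibly on the stable loci. The main obstacle is the gluing step: one must make precise, in families and for general $G$, that the invariant direct image is an equivalence near the ramification points with the prescribed local type $\bm\tau$ fixed, and that the reduction of structure group is preserved under pushforward. This is where the standing hypothesis $\theta_{\gfrak}(\tau_i)<1$ and the choice of minimal $\ell$ with $\exp(2\pi\sqrt{-1}\,\ell\,\nu_{\gfrak}(\tau_i))=1$ enter, guaranteeing that the local equivariant data are exactly the weighted flags of type $\bm\tau$ and that no extra isomorphisms or weights appear.
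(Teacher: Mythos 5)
The paper does not actually prove this theorem: it is recalled verbatim from the literature, with the statement attributed to Balaji--Biswas--Nagaraj, Balaji--Seshadri, and Biswas's Duke paper, so there is no in-paper argument to compare yours against line by line. That said, your outline is a faithful reconstruction of the strategy in those references: the local analysis on the cyclic cover of a formal disc (eigenspace decomposition of the invariant direct image producing the weighted flag, with the minimality of $\ell$ and $\theta_{\gfrak}(\tau_i)<1$ pinning down the local type), Galois descent over $C\setminus\bm p$, formal gluing, the degree comparison $\deg_{\Gamma}\widehat{E}=|\Gamma|\cdot\operatorname{pdeg}(E)$ matching the two stability conditions, and the passage to coarse/GIT moduli.

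Two points where your sketch is thinner than the actual proofs. First, the reduction from general $G$ to $\GL(V)$: in Balaji--Biswas--Nagaraj the correspondence for principal bundles is set up Tannakian-style (a parabolic $G$-bundle is a compatible functor from $\operatorname{Rep}(G)$ to parabolic vector bundles), precisely because "a parabolic $G$-bundle is a parabolic $\GL(V)$-bundle with a compatible reduction" is not literally the definition used here (the definition is a section of $E_{p_i}/P(\tau_i)$), and reconciling the two descriptions is part of the work rather than a formality. Second, the stability comparison requires not just the degree identity but a bijection between $\Gamma$-equivariant reductions (or saturated equivariant subsheaves) upstairs and parabolic reductions (or parabolic subsheaves with the induced structure) downstairs; your proposal asserts that "these two classes of reductions match" without argument, and this matching is the technical heart of Biswas's Duke paper. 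Neither point is a fatal gap in a sketch at this level, but both would need to be supplied to make the argument complete.
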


\subsection{Parabolic determinants as equivariant determinants}\label{sec:paradet}

Consider the moduli space $M_{G}^{\bm \tau, ss}$ of $(\Gamma,\,G)$ bundles associated to a 
Galois cover $p\,:\, \widehat{C} \,\to\, C$
with Galois group $\Gamma$. Let $\widehat{M}_{G}$ be the moduli space of 
semistable principal $G$-bundles on the curve $\widehat{C}$. There is a natural forgetful 
map $M^{par,ss}_{G,\bm \tau}\, \to\, \widehat{M}_{G}$ that  simply forgets
the action of $\Gamma$.

Given a representation $\phi\,:\, G\,\to\,\SL_r$, consider the associated morphism
$\overline{\phi}\,:\,\widehat{M}_{G}\,\to\,\widehat{M}_{\SL_r}$ between the
corresponding moduli spaces. Let $\mathcal{L}$ be the determinant of cohomology line bundle
on $\widehat{M}_{\SL_r}$. Let
$$
\mathcal{L}_{\phi}\, :=\,\overline{\phi}^*\mathcal{L}
$$
be its pullback to $\widehat{M}_{G}$. If $G\,=\,\SL_r$, then $\phi$ can be
taken to be the standard
representation. Now Theorem \ref{thm:orbifold} realizes the moduli space
$M^{par,ss}_{G,\bm\tau}$ of parabolic
bundles as a moduli space $M^{\bm \tau,ss}_{G}$ of $(\Gamma,\,G)$--bundles on
$\widehat{C}$, which maps further into $\widehat{M}_G$ by forgetting the action
of $\Gamma$. Thus using the identification between $M_{G}^{\bm \tau,ss}$ and
$M^{par,ss}_{G,\bm\tau}$, we get a natural line bundle $\mathcal{L}_{\phi}$ on
$M^{par,ss}_{G,\bm\tau}$. 

On the other hand, using the parabolic determinant of cohomology, one can construct natural 
line bundles on $M^{par,ss}_{G,\bm \tau}$ as follows:

Let $\bm \tau\,=\,(\tau_1,\,\cdots,\,\tau_n)$ be a string of parabolic weights such that 
$\theta_{\mathfrak{g}}(\tau_i)\,<\,1$ for all $1\,\leq\, i\,\leq \,n$. Take a faithful
representation $(\phi,\,V)$ of the group $G$ satisfying the following condition:
\begin{itemize}
	\item The local type $\phi (\bm \tau)\,=\,(\phi(\tau_1),\,\cdots,\, \phi(\tau_n))$ is
	rational, and $\theta_{\mathfrak{sl}(V)}(\phi (\tau_i))\,<\,1$.
\end{itemize}
Here, $\theta_{\mathfrak{g}}$ and $\theta_{\mathfrak{sl}(V)}$ are the highest roots of the Lie 
algebras $\mathfrak{g}$ and $\mathfrak{sl}(V)$, respectively. We now recall the definition of 
the parabolic determinant of cohomology for $\operatorname{G}$--bundles.

\begin{definition}\label{def:parabolicdet}
	Let $\mathcal{E}$ be a family of parabolic $G$--bundles on a curve $C$ with $n$-marked points,
	and let $\phi\,:\, \operatorname{G}\,\to\, \SL(V)$ be a faithful representation.
	Then the parabolic $G$-determinant bundle $\operatorname{Det}_{par,\phi}(\bm \tau)$
	with weight $\bm \tau$ is defined to be the line bundle
	$\operatorname{Det}_{par}(\nu_{\mathfrak{sl}(V)}(\phi( \bm \tau)))$.
\end{definition}

The following is recalled from \cite{BiswasRaghavendra}. 

\begin{proposition}\label{prop:identification}
	Let $\ell$ be the order of the stabilizer at each ramification point of the Galois cover $p\,:\,\widehat{C}
	\,\to\, C$ with Galois group $\Gamma$, then under the
	isomorphism in Theorem \ref{thm:orbifold}, the parabolic determinant of cohomology
	is related to $\mathcal{L}_{\phi}$ by the formula 
	$$
	\mathcal{L}_{\phi} \,\,\cong\,\, \left(\operatorname{Det}_{par,\phi}(\bm \tau)
	\right)^{\frac{|\Gamma|}{\ell}},
	$$
	where the $\Gamma$ cover $\widehat{C}$ is determined by the
	parabolic weight data $\nu_{\mathfrak{sl}(V)}(\phi \bm (\tau))$.
\end{proposition}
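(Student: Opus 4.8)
The plan is to reduce everything to vector bundles and then to compute the pulled-back determinant of cohomology on $\widehat{C}$ in terms of data on $C$, using the invariant direct image functor of Theorem \ref{thm:orbifold}. First I would reduce to the case $G=\SL(V)$. By Definition \ref{def:parabolicdet} one has $\operatorname{Det}_{par,\phi}(\bm\tau)=\operatorname{Det}_{par}(\nu_{\mathfrak{sl}(V)}(\phi(\bm\tau)))$, and by construction $\mathcal{L}_{\phi}=\overline{\phi}^{*}\mathcal{L}$; since forming the bundle associated to $\phi$ commutes with the invariant direct image, both sides of the asserted isomorphism are pullbacks, along the finite morphism induced by $\phi$, of the corresponding objects for the standard representation. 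Thus it suffices to treat $G=\SL(V)$, with $\widehat{V}$ the rank-$r$ $\Gamma$-equivariant vector bundle on $\widehat{C}$ and $E_{*}=(p_{*}\widehat{V})^{\Gamma}$ the associated parabolic bundle on $C$ of weight $\bm\alpha=\nu_{\mathfrak{sl}(V)}(\phi(\bm\tau))$.

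Next I would pass from $\widehat{C}$ to $C$. The bundle $\mathcal{L}$ on $\widehat{M}_{\SL(V)}$ has fiber $\det H^{\bullet}(\widehat{C},W)$ over a point $[W]$, so the fiber of $\mathcal{L}_{\phi}$ over a $(\Gamma,\SL(V))$-bundle is $\det H^{\bullet}(\widehat{C},\widehat{V})$ with $\widehat{V}$ carrying its equivariant structure. Since $p$ is finite, $H^{\bullet}(\widehat{C},\widehat{V})=H^{\bullet}(C,p_{*}\widehat{V})$, so this fiber is $\operatorname{Det}(p_{*}\widehat{V})$, the ordinary determinant of cohomology on $C$ of the rank-$r|\Gamma|$ sheaf $p_{*}\widehat{V}$. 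The problem therefore reduces to identifying $\operatorname{Det}(p_{*}\widehat{V})$ with $\operatorname{Det}_{par}(E_{*})^{|\Gamma|/\ell}$, both now living on $C$.

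The core is the local analysis at the marked points. Over $C\setminus\bm p$ the map $p$ is étale, so $p_{*}\widehat{V}$ is locally isomorphic there to $|\Gamma|$ copies of $E$; this produces the bulk term $\operatorname{Det}(E)^{|\Gamma|}$, which matches the $\operatorname{Det}(\mathcal{E})^{\ell}$ factor of $\operatorname{Det}_{par}$ raised to the power $|\Gamma|/\ell$. Near $p_{i}$, in the model $w=z^{\ell}$ with $\Gamma_{q_{i}}=\langle\gamma_{i}\rangle$ acting by $z\mapsto\zeta z$, the $\gamma_{i}$-eigenspace decomposition of $\widehat{V}_{q_{i}}$, with eigenvalues $\exp(2\pi\sqrt{-1}\,\alpha_{j})$, is exactly what defines the quasi-parabolic filtration $\mathscr{F}_{\bullet,p_{i}}$ and its weights $\alpha_{j}$. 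Computing $p_{*}\widehat{V}$ in this model exhibits it as a chain of elementary modifications of the bulk whose graded pieces are the eigenspaces $\operatorname{Gr}^{j}\mathscr{F}$, occurring once for each of the $|\Gamma|/\ell$ points of $p^{-1}(p_{i})$ and with vanishing order $\ell\alpha_{j}$. By additivity of the determinant of cohomology along these modifications, each such piece contributes $(\det\operatorname{Gr}^{j}\mathscr{F})^{\ell\alpha_{j}}$, and collecting over the $|\Gamma|/\ell$ sheets yields precisely the weighted factors of $\operatorname{Det}_{par}(E_{*})$ to the power $|\Gamma|/\ell$. The remaining normalization $(\det\mathcal{E}_{p_{0}})^{\ell\chi_{\bm p}/r}$ is matched using the $\SL$-condition $\det\widehat{V}\cong\mathcal{O}_{\widehat{C}}$.

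The hard part will be making this local elementary-modification computation precise and carrying it out in families, so that the fiberwise determinant identities patch to an isomorphism of line bundles on the moduli space. Concretely, I must match the eigenvalue exponents of the $\Gamma_{q_{i}}$-action with the weights $\alpha_{j}=\nu_{\mathfrak{sl}(V)}(\phi(\tau_{i}))_{j}$, verify the vanishing orders $\ell\alpha_{j}$ of the modifications, and confirm that the product over the $|\Gamma|/\ell$ preimages and over the eigenvalues reproduces exactly the exponents in $\operatorname{Det}_{par}(\bm\alpha)$ with no residual torsion line bundle. This is where I would follow the explicit determinant-of-cohomology computations of Biswas-Raghavendra \cite{BiswasRaghavendra}.
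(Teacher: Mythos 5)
First, a point of comparison: the paper does not prove Proposition \ref{prop:identification} at all --- it is explicitly ``recalled from \cite{BiswasRaghavendra}'' --- so there is no internal argument to measure your proposal against. What you have written is an attempt to reconstruct the cited result, and its overall architecture (reduce to $\SL(V)$ via the associated bundle, use $H^{\bullet}(\widehat{C},\widehat{V})\cong H^{\bullet}(C,p_*\widehat{V})$ for the finite map $p$, then a local analysis at the ramification points matching eigenvalue exponents of the $\Gamma_{q_i}$-action with the parabolic weights and counting the $|\Gamma|/\ell$ preimages of each $p_i$) is indeed the standard strategy and produces the right exponents.

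There is, however, a genuine gap in the ``bulk term'' step. From the fact that $p$ is \'etale over $C\setminus\bm p$ you conclude that $p_*\widehat{V}$ is ``locally isomorphic to $|\Gamma|$ copies of $E$'' and hence contributes $\operatorname{Det}(E)^{|\Gamma|}$; but the determinant of cohomology is a global invariant and is not determined by local isomorphism type. Globally, $p_*\widehat{V}$ decomposes $\Gamma$-isotypically as $\bigoplus_{\rho}\rho\otimes W_\rho$ with $W_\rho=\bigl(p_*(\widehat{V}\otimes\rho^{\vee})\bigr)^{\Gamma}$, and only the trivial-isotypic piece is $E$; the other $W_\rho$ are invariant direct images of twists of $\widehat{V}$, i.e.\ underlying bundles of parabolic bundles with \emph{shifted} weights, whose determinants of cohomology on the moduli space are not $\operatorname{Det}(E)$. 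In particular $p_*\widehat{V}$ and $E^{\oplus|\Gamma|}$ are not related by elementary modifications supported on $\bm p$ (they already differ on $C\setminus\bm p$), so ``additivity of the determinant of cohomology along these modifications'' cannot be applied to that pair as written. Two standard repairs: either track each isotypic piece $W_\rho$ separately and verify that the weight shifts cancel only after summing over all $\rho$ (this is essentially what \cite{BiswasRaghavendra} does), or run the elementary-modification argument upstairs on $\widehat{C}$, comparing $\widehat{V}$ with $p^*E$ along the ramification divisor via the natural map $p^*\bigl(p_*\widehat{V}\bigr)^{\Gamma}\to\widehat{V}$; in the latter route $\det H^{\bullet}(\widehat{C},p^*E)=\det H^{\bullet}(C,E\otimes p_*\mathcal{O}_{\widehat{C}})$ reduces to $\operatorname{Det}(E)^{|\Gamma|}$ up to a constant line precisely because the $\SL$-condition $\det E\cong\mathcal{O}_C$ kills the Deligne-pairing cross terms with the fixed bundles $\bigl(p_*\mathcal{O}_{\widehat{C}}\bigr)^{\rho}$. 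Your final paragraph correctly identifies the remaining bookkeeping as the hard part, but the specific claim quoted above is the one place where the argument, as stated, would fail rather than merely need elaboration.
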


\section{Uniformization of moduli spaces and conformal blocks}

In this section, following the work of Belkale-Fakhruddin \cite{BF}, Laszlo \cite{Laszlo}, 
and Laszlo-Sorger \cite{LaszloSorger:97}, we discuss the universal isomorphism between the
sections of the parabolic determinant of cohomology bundle and the spaces of conformal blocks. If 
$(C,\,\vec{p})$ is a fixed smooth $n$-pointed curve, this identification is due to 
Beauville-Laszlo \cite{BeauvilleLaszlo:94} ($G\,=\,\SL_r$ and $n\,=\,0$), Faltings 
\cite{Faltings:94}, Kumar-Narasimhan-Ramanathan \cite{KNR:94} (for $n\,=\,0$), Pauly 
\cite{Pauly:96} (for $G\,=\,\SL_r$) and Laszlo-Sorger \cite{LaszloSorger:97}. The result has been 
extended to nodal curves by Belkale-Fakhruddin \cite{BF}. All of the results use a key 
uniformization theorem of Harder \cite{Harder} and Drinfeld-Simpson \cite{DS} in the smooth 
case and its generalization in \cite{BF, BF1} for the nodal case. We mostly follow the 
discussion in \cite[Sec. 6]{BF}.

\subsection{The line bundle on the universal moduli stack}\label{sec:uniformization}

Consider the moduli stack $\mathcal{M}_{g,n}$ parametrizing smooth $n$-pointed curves of 
genus $g$.
Recall from Section \ref{sec:parabolicmoduli} that given a tuple $\bm 
\tau\,=\,(\tau_1,\,\cdots,\, \tau_n)$ in the fundamental Weyl alcove $\Phi$ of a simple Lie algebra 
$\mathfrak{g}$, we have the moduli stack $\mathcal{P}ar_{G}(C,\bm{p},\bm \tau)$ of 
quasi-parabolic $G$ bundles of type $\bm \tau$ on a smooth curve $C$. This 
construction for families of smooth $n$-pointed curves gives
relative moduli stacks $\pi_e\,:\, 
\mathcal{P}ar_G(\bm \tau)\,\to\, \mathcal{M}_{g,n} $ such that for any smooth curve 
$(C,\,\vec{p})$ we have $\pi_e^{-1}(C,\bm{p})\,=\,\mathcal{P}ar_G(C,\bm p, \bm \tau)$.
Throughout this discussion, it is assumed that $\theta_{\mathfrak{g}}(\tau_i)\,<\,1$
for all $1\,\leq\, i\,\leq \,n$.

Following \cite{BF} and \cite{Laszlo}, we construct a line bundle 
$\mathscr{L}_{\vec{\lambda}} \to \mathcal{P}ar_G(\bm \tau)$, such that 
$\pi_*\mathscr{L}_{\vec{\lambda}}\,=\,\mathbb{V}^*_{\vec{\lambda}}(\mathfrak{g},\ell),$ where 
$\vec{\lambda}$ and $\ell$ are related to $\bm \tau$ by the exponential map. The 
construction in \cite{BF} extends to the stable nodal curves.

\subsubsection{The relative affine flag varieties}

Let $\mathcal{C}\,\to\, S$ be a family of smooth $n$-pointed curves, and let 
$S\,=\,\operatorname{Spec}{R}$. Consider the affine curve 
$\mathcal{C}'\,=\,\mathcal{C}-\sqcup_{i=1}^np_i(S)$. Let 
$\mathcal{C}_A\,=\,\mathcal{C}\times_R\operatorname{Spec}(A)$ for an $R$ algebra $A$ and 
similarly define $\mathcal{C}_A'$. Let $\widehat{\mathcal{C}}_A$ denote the completion of 
$\mathcal{C}_A$ along the sections $\bm p$. The sections $\bm p$ induce sections of 
$\widehat{\mathcal{C}}_A$, and $\widehat{\mathcal{C}}_A'$ denotes its complement.

Consider the following:
\begin{enumerate}
	\item $L_{\mathcal{C}',G}(A)\,=\,\operatorname{Mor}_k(\mathcal{C}'_A,\, G)$. 
	\item $\mathcal{LG}(A)\,=\,G(\Gamma(\widehat{\mathcal{C}}_A',\,\mathcal{O}))$.
\end{enumerate}
Each $\tau_i$ determines a parabolic subgroup $P(\tau_i)\,\subset\, G$, and we consider 
the standard parahoric subgroup $\mathcal{P}_{\bm \tau}$ given by the inverse image of 
$\prod_{i=1}^n P(\tau_i)$ under the natural evaluation map $L_G(A)\,\lra\, G^{n}$.
Proposition 6.3 of \cite{BF} shows that the $R$ group $L_{\mathcal{C}',G}$ is relatively
ind-affine and formally smooth with connected integral geometric fibers over 
$\operatorname{Spec}(A)$. Observe that if $n\,=\,1$ and $\lambda\,=\,0$, and
if $t$ is a formal 
coordinate at the marked point $p_1$, then $\mathcal{LG}$ gets identified with the loop 
group $L_G$ and $\mathcal{P}_{\bm \tau}$ is the group of positive loops $L^{+}_G$.

\subsubsection{The central extension}

Faltings (\cite{Faltings:94}, and also \cite[Lemma
8.3 ]{BeauvilleLaszlo:94}, \cite{LaszloSorger:97}) constructed a
projective representation of $\mathcal{LG}$ on
$\mathbb{H}_{\vec{\lambda}}\,=\,R\otimes \left( \bigotimes
\mathcal{H}_{\lambda_i}(\mathfrak{g},\ell)\right)$ whose derivative
coincides with the natural projective action of the Lie algebra of
$\mathcal{LG}$. This gives us a central extension 
\begin{equation}\label{eqn:central}
	1\,\longrightarrow\, \mathbb{G}_m \,\longrightarrow\, \widehat{\mathcal{LG}}\,\longrightarrow\,
	\mathcal{LG}\,\longrightarrow\, 1.
\end{equation}
The extension $\widehat{\mathcal{LG}}$ splits
over $\mathcal{P}_{\bm \tau}$ (see \cite[ Lemma 7.3.5]{SorgerGbundle}), and the central extension $\widehat{\mathcal{LG}}$ is independent of the chosen representations $\vec{\lambda}$. Moreover the extension \eqref{eqn:central} splits over $L_{\mathcal{C}',G}$ (\cite{Sorger99}, \cite[Lemma 6.5]{BF}). 

\subsection{The relative uniformization and parabolic theta functions}

Let $\widehat{\mathcal{P}}_{\bm \tau}\,:=\, \mathcal{P}_{\bm \tau}\times \mathbb{G}_m$. The 
weight vectors $\vec{\lambda}$ give natural characters on $\widehat{\mathcal{P}}_{\bm 
	\tau}$ and the product of characters induces a line bundle 
$$\mathscr{L}_{\vec{\lambda}} \,\lra\, {\mathcal{Q}}_{\bm 
	\tau}\,:=\,\widehat{\mathcal{LG}}/\widehat{\mathcal{P}}_{\bm \tau}.$$ Moreover, from the 
uniformization theorems \cite{BF,BeauvilleLaszlo:94, DS, Harder}, it follows that the 
quotient of ${\mathcal{Q}}_{\bm \tau}$ by $L_{\mathcal{C}',G}$ is isomorphic to the pullback $\mathcal{P}ar_G(\bm \tau)_S$ of the stack $\mathcal{P}ar_G(\bm \tau)$ to $S$. Now 
since the extension in \eqref{eqn:central} splits over $L_{\mathcal{C}',G}$, the line bundle 
$\mathscr{L}_{\vec{\lambda}}$ descends to a line bundle over the stack $\mathcal{P}ar_G(\bm \tau)$ 
which we will also denote by $\mathscr{L}_{\vec{\lambda}}$. Observe that the line bundle 
$\mathscr{L}_{\vec{\lambda}}$ is trivialized along the trivial 
section of $\mathcal{P}ar_G(\bm \tau)$ over $S$, and such data determine the line bundle 
up to canonical isomorphism. We will refer to the line bundle $\mathscr{L}_{\vec{\lambda}}$ 
as the {\em Borel-Weil-Bott} line bundle.

\subsubsection{Parabolic determinant as the Borel-Weil-Bott line bundle}

We now compare the parabolic determinant of cohomology of the universal bundle with the 
line bundle $\mathscr{L}_{\vec{\lambda}}$.

Recall from Definition \ref{def:parabolicdet} the notion of the parabolic determinant 
$\operatorname{Det}_{par,\phi}(\bm \tau)$ of cohomology associated to a family of parabolic 
$G$ bundles on $\mathcal{C}\,\to\,S$ and a suitable representation $\phi\,:
\,G\,\to\, 
\SL(V)$. Now for the fixed $n$-pointed curve $(C,\,\bm p)$, it is known that the line 
bundles $\mathscr{L}^{\otimes m_{\phi}}_{\vec{\lambda}}$ and 
$\operatorname{Det}_{par,\phi}(\bm \tau)$ on $\mathcal{P}ar_G(C,\bm p,\bm \tau)$ are 
isomorphic, where $m_{\phi}$ is the Dynkin index of the embedding $\phi$. Since these 
line bundles are determined up to a normalizing factor, it follows that the corresponding 
projective bundles are identified as
\begin{equation}\label{eqn:impprojisomorphism}
	\mathbb{P}{\pi_{e}}_*\left(\operatorname{Det}_{par, \phi}(\bm \tau)\right)
	\,\cong\, \mathbb{P}{\pi_e}_*\left( \mathscr{L}_{\vec{\lambda}}^{\otimes m_{\phi}}\right),
\end{equation}
where $\pi_{e}: \mathcal{P}ar_{G}(\bm \tau)\,\to\, \mathcal{M}_{g,n}$ is
the natural projection.

\subsubsection{Parabolic theta functions and conformal blocks}

For any choice of formal parameters, the ind-scheme ${\mathcal{Q}}_{\bm \tau}$ can be
identified with 
the product of affine flag varieties $\prod_{i=1}^nL_G/\mathcal{P}_{\tau_i}$ and the line 
bundle $\mathscr{L}_{\vec{\lambda}}$ pulls back to the corresponding line bundle on 
$L_G/\mathcal{P}_{\tau_i}$ given by the character $\lambda_i$. Now by Kumar 
\cite{kumarbook} and Mathieu \cite{Mathieu}, we get that
\begin{equation}\label{eqn:pizzatheorem}
	H^0(\mathcal{Q}_{\bm \tau},\,\,\mathscr{L}_{\vec{\lambda}})\,=\,\mathbb{H}_{\vec{\lambda}}^{*}.
\end{equation}

We end this discussion with the following theorem (see \cite[Theorem 1.7]{BF} and 
\cite[Sec. 5.7]{Laszlo}) which we will refer to as the universal identification of 
the parabolic theta functions and the conformal blocks. In the case when $S$ is a point, the result 
can be found in \cite{BeauvilleLaszlo:94, Faltings:94, KNR:94, LaszloSorger:97}.

\begin{theorem}\label{thm:uniformverlinde}
	The push-forward of $\mathscr{L}_{\vec{\lambda}}$ along the map $\pi_e\,:\,
	\mathcal{P}ar_G(\bm \tau)\,\to\, \mathcal{M}_{g,n}$ can be
	identified canonically with the bundle of coordinate free conformal blocks $\mathbb{V}^{\dagger}_{\vec{\lambda}}(\mathfrak{g},\ell)$. 
	Moreover, 
	$\mathscr{L}_{\vec{\lambda}}$ descends to a
	line bundle on $M_{G,\bm \tau}^{par,rs}$, and $({\pi_e}_{|M^{par,rs}_{G,\bm
			\tau}})_*\mathscr{L}_{\vec{\lambda}}$ is isomorphic to
	$\mathbb{V}_{\vec{\lambda}}^{\dagger}(\mathfrak{g},\ell)$ provided the following conditions hold:
	\begin{itemize}  
		\item The genus of the orbifold curve determined by $\bm \tau$ is at least $2$, if $G$ is 
		not $\SL_2$.
		
		\item The genus of the orbifold curve is at least $3$, if $G=\SL_2$.
	\end{itemize}
\end{theorem}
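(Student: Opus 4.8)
The plan is to deduce both assertions from the relative uniformization already set up, reducing the computation of the pushforward to the affine Borel--Weil--Bott theorem and a codimension count. First I would work over an affine base $S=\Spec R$ and invoke the uniformization theorems of Harder and Drinfeld--Simpson \cite{DS, Harder}, in the relative form of \cite{BF}, which identify the pullback $\mathcal{P}ar_G(\bm\tau)_S$ with the stack quotient $\mathcal{Q}_{\bm\tau}/L_{\mathcal{C}',G}$. Since the central extension \eqref{eqn:central} splits over $L_{\mathcal{C}',G}$, the line bundle $\mathscr{L}_{\vec{\lambda}}$ on $\mathcal{Q}_{\bm\tau}$ is $L_{\mathcal{C}',G}$-equivariant and descends, and its global sections over the stack are the $L_{\mathcal{C}',G}$-invariants of $H^0(\mathcal{Q}_{\bm\tau},\,\mathscr{L}_{\vec{\lambda}})$. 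The essential input is then the affine Borel--Weil--Bott theorem of Kumar \cite{kumarbook} and Mathieu \cite{Mathieu}, recorded in \eqref{eqn:pizzatheorem}, which gives $H^0(\mathcal{Q}_{\bm\tau},\,\mathscr{L}_{\vec{\lambda}})=\mathbb{H}_{\vec{\lambda}}^{*}$ compatibly in families.

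Next I would identify these invariants with conformal blocks. Under the uniformization the subgroup $L_{\mathcal{C}',G}$ has Lie algebra $\mathfrak{g}\otimes_{\mathbb{C}}\pi_*\mathcal{O}_{\mathcal{C}}(*D)$; because $L_{\mathcal{C}',G}$ is relatively ind-affine with connected geometric fibres (\cite[Prop.\ 6.3]{BF}), taking group invariants coincides with taking Lie-algebra invariants, so $(\mathbb{H}_{\vec{\lambda}}^{*})^{L_{\mathcal{C}',G}}$ is precisely the $\mathcal{O}_S$-dual of the sheaf of covacua, that is, the sheaf of conformal blocks. Since the descended objects never required a choice of formal coordinate, the resulting identification is with the coordinate-free conformal blocks $\mathbb{V}_{\vec{\lambda}}^{\dagger}(\mathfrak{g},\ell)$. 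Local freeness and the flat projective connection from Theorem \ref{th1}, together with cohomology-and-base-change, then upgrade the fibrewise identification to an isomorphism of $\mathcal{O}_{\mathcal{M}_{g,n}}$-modules, giving the first assertion $\pi_{e*}\mathscr{L}_{\vec{\lambda}}\cong\mathbb{V}_{\vec{\lambda}}^{\dagger}(\mathfrak{g},\ell)$.

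For the \emph{moreover} part I would first descend $\mathscr{L}_{\vec{\lambda}}$ along the gerbe $\mathcal{P}ar_G^{rs}(C,\bm p,\bm\tau)\to M_{G,\bm\tau}^{par,rs}$ banded by $Z(G)$: the bundle descends exactly when $Z(G)$ acts trivially on its fibres, which is read off from its central character and is consistent with the descent of the parabolic determinant $\operatorname{Det}_{par,\phi}(\bm\tau)$ of which a power of $\mathscr{L}_{\vec{\lambda}}$ is a multiple. The remaining point is to compare $({\pi_e}_{|M^{par,rs}_{G,\bm\tau}})_*\mathscr{L}_{\vec{\lambda}}$ with the pushforward over the full stack. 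These coincide once the complement of the regularly stable locus inside the semistable moduli space $M^{par,ss}_{G,\bm\tau}$ has codimension at least two: then, by normality of $M^{par,ss}_{G,\bm\tau}$ and Hartogs' theorem, every section of $\mathscr{L}_{\vec{\lambda}}$ over the regularly stable locus extends uniquely, so the two pushforwards agree.

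\textbf{The hard part} will be the codimension estimate, and this is exactly where the genus hypotheses enter. I would transport them through the orbifold correspondence of Theorem \ref{thm:orbifold}, so that the non-regularly-stable locus of $M^{par,ss}_{G,\bm\tau}$ becomes that of $(\Gamma,G)$-bundles on the Galois cover $\widehat{C}$; the standard codimension bounds for the strictly-semistable and non-regularly-stable loci of $G$-bundles on a curve of the relevant orbifold genus --- at least $2$ in general, and at least $3$ for $\SL_2$, the lower adjoint dimension forcing the extra unit --- then yield codimension $\geq 2$ of the complement. With that estimate the Hartogs extension closes the argument, and the thresholds are sharp precisely because below them the complement can drop to codimension one and sections need no longer extend.
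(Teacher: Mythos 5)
Your proposal is correct and follows essentially the same route as the paper, which in fact does not prove this theorem from scratch but cites it from Belkale--Fakhruddin and Laszlo; the appendix's scaffolding (relative uniformization, splitting of the central extension over $L_{\mathcal{C}',G}$, the affine Borel--Weil identity \eqref{eqn:pizzatheorem}, and the equivalence of group- and Lie-algebra-invariants for the connected integral ind-group) is exactly the argument you reconstruct. The one imprecision is in the \emph{moreover} step: to identify $({\pi_e}_{|M^{par,rs}_{G,\bm\tau}})_*\mathscr{L}_{\vec{\lambda}}$ with the pushforward over the full stack you need the complement of the regularly stable locus to have codimension at least two \emph{in the stack} $\mathcal{P}ar_G(C,\bm p,\bm\tau)$ --- which includes the unstable locus --- not merely inside the semistable moduli space $M^{par,ss}_{G,\bm\tau}$; this is precisely the codimension statement the paper's remark attributes to the genus hypotheses (via \cite[App.\ C]{BMW1}), and with that correction your Hartogs argument closes as intended.
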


\begin{remark}
	The last conditions ensure that for any smooth pointed curve $(C,\,\bm p)$, the
	codimension of the moduli space $M_{G,\bm \tau}^{par,rs}(C,\bm p)$ in the moduli stack
	$\mathcal{P}ar_G(C, \bm p,\bm \tau)$ is at least two. We refer the reader to \cite[App. C]{BMW1}.
\end{remark}

\medskip

\medskip
 
\bibliographystyle{amsplain}

\bibliography{./papers}

\providecommand{\MR}[1]{}
\providecommand{\bysame}{\leavevmode\hbox to3em{\hrulefill}\thinspace}
\providecommand{\MR}{\relax\ifhmode\unskip\space\fi MR }
\providecommand{\MRhref}[2]{%
  \href{http://www.ams.org/mathscinet-getitem?mr=#1}{#2}
}
\providecommand{\href}[2]{#2}
\begin{thebibliography}{10}

\bibitem{Andersen:12}
J\o rgen~Ellegaard Andersen, \emph{Hitchin's connection, {T}oeplitz operators,
  and symmetry invariant deformation quantization}, Quantum Topol. \textbf{3}
  (2012), no.~3-4, 293--325. \MR{2928087}

\bibitem{Andersen:personal}
J\o rgen~Ellegaard Andersen and Jens~Kristian Egsgaard, \emph{The equivalence
  of the {H}itchin connection and the {K}nizhnik-{Z}amolodchikov connection},
  personal communication, March 2021.

\bibitem{ADW}
Scott Axelrod, Steve Della~Pietra, and Edward Witten, \emph{Geometric
  quantization of {C}hern-{S}imons gauge theory}, J. Differential Geom.
  \textbf{33} (1991), no.~3, 787--902. \MR{1100212}

\bibitem{BBMP20}
Thomas Baier, Michele Bolognesi, Johan Martens, and Christian Pauly, \emph{The
  {H}itchin connection in arbitrary characteristic}, preprint (2020), 47 pp.,
  arxiv:2002.12288.

\bibitem{BalajiBiswasNagaraj}
Vikraman Balaji, Indranil Biswas, and D.~S. Nagaraj., \emph{Ramified
  {$G$}-bundles as parabolic bundles}, J. Ramanujan Math. Soc. \textbf{18}
  (2003), no.~2, 123--138. \MR{1995862}

\bibitem{BalajiSeshadri}
Vikraman Balaji and C.~S. Seshadri, \emph{Moduli of parahoric
  {$\mathscr{G}$}-torsors on a compact {R}iemann surface}, J. Algebraic Geom.
  \textbf{24} (2015), no.~1, 1--49. \MR{3275653}

\bibitem{BaragliaKamgarpourVarma:19}
David Baraglia, Masoud Kamgarpour, and Rohith Varma, \emph{Complete
  integrability of the parahoric {H}itchin system}, Int. Math. Res. Not. IMRN
  (2019), no.~21, 6499--6528. \MR{4027558}

\bibitem{BeauvilleLaszlo:94}
Arnaud Beauville and Yves Laszlo, \emph{Conformal blocks and generalized theta
  functions}, Comm. Math. Phys. \textbf{164} (1994), no.~2, 385--419.
  \MR{1289330 (95k:14011)}

\bibitem{BD1}
Alexander Beilinson and Vladimir Drinfeld, \emph{Quantization of {H}itchin's
  integrable system and {H}ecke eigensheaves}.

\bibitem{BPZ:84}
Alexander.~A. Belavin, Alexander.~M. Polyakov, and Alexander.~B. Zamolodchikov,
  \emph{Infinite conformal symmetry in two-dimensional quantum field theory},
  Nuclear Phys. B \textbf{241} (1984), no.~2, 333--380. \MR{757857}

\bibitem{Belkale:09}
Prakash Belkale, \emph{Strange duality and the {H}itchin/{WZW} connection}, J.
  Differential Geom. \textbf{82} (2009), no.~2, 445--465. \MR{2520799
  (2010j:14065)}

\bibitem{BF}
Prakash Belkale and Najmuddin Fakhruddin, \emph{Triviality properties of
  principal bundles on singular curves}, Algebr. Geom. \textbf{6} (2019),
  no.~2, 234--259. \MR{3914752}

\bibitem{BF1}
\bysame, \emph{Triviality properties of principal bundles on singular curves.
  {II}}, Canad. Math. Bull. \textbf{63} (2020), no.~2, 423--433. \MR{4092891}

\bibitem{BGM2}
Prakash Belkale, Angela Gibney, and Swarnava Mukhopadhyay, \emph{Vanishing and
  identities of conformal blocks divisors}, Algebr. Geom. \textbf{2} (2015),
  no.~1, 62--90. \MR{3322198}

\bibitem{BGM1}
\bysame, \emph{Nonvanishing of conformal blocks divisors on {$\overline
  M_{0,n}$}}, Transform. Groups \textbf{21} (2016), no.~2, 329--353.
  \MR{3492039}

\bibitem{DesaleRamanathan}
Usha Bhosle and A.~Ramanathan, \emph{Moduli of parabolic {$G$}-bundles on
  curves}, Math. Z. \textbf{202} (1989), no.~2, 161--180. \MR{1013082}

\bibitem{Biswasduke}
Indranil Biswas, \emph{Parabolic bundles as orbifold bundles}, Duke Math. J.
  \textbf{88} (1997), no.~2, 305--325. \MR{1455522}

\bibitem{BiswasHoffmann:12}
Indranil Biswas and Norbert Hoffmann, \emph{Poincar\'{e} families of
  {$G$}-bundles on a curve}, Math. Ann. \textbf{352} (2012), no.~1, 133--154.
  \MR{2885579}

\bibitem{BMW1}
Indranil Biswas, Swarnava Mukhopadhyay, and Richard Wentworth, \emph{A
  {H}itchin connection on non abelian theta functions for parabolic
  {$G$}-bundles}, to appear in Crelle (2023), 58 pp., arxiv:2103.03792.

\bibitem{BiswasRaghavendra}
Indranil Biswas and Nyshadham Raghavendra, \emph{Determinants of parabolic
  bundles on {R}iemann surfaces}, Proc. Indian Acad. Sci. Math. Sci.
  \textbf{103} (1993), no.~1, 41--71. \MR{1234199}

\bibitem{BjerreThesis}
Mette Bjerre, \emph{The {H}itchin connection for the quantization of the moduli
  space of parabolic bundles on surfaces with marked points}, PhD Thesis
  (2018), 101 pp., Aarhus University.

\bibitem{BY}
Hans~U. Boden and K\^{o}ji Yokogawa, \emph{Moduli spaces of parabolic {H}iggs
  bundles and parabolic {$K(D)$} pairs over smooth curves. {I}}, Internat. J.
  Math. \textbf{7} (1996), no.~5, 573--598. \MR{1411302}

\bibitem{Drinfeld}
Vladimir~G. Drinfeld, \emph{Quantum groups}, Proceedings of the {I}nternational
  {C}ongress of {M}athematicians, {V}ol. 1, 2 ({B}erkeley, {C}alif., 1986),
  Amer. Math. Soc., Providence, RI, 1987, pp.~798--820. \MR{934283}

\bibitem{DS}
Vladimir~G. Drinfeld and Carlos Simpson, \emph{{$B$}-structures on
  {$G$}-bundles and local triviality}, Math. Res. Lett. \textbf{2} (1995),
  no.~6, 823--829. \MR{1362973}

\bibitem{Fakhruddin:12}
Najmuddin Fakhruddin, \emph{Chern classes of conformal blocks}, Compact moduli
  spaces and vector bundles, Contemp. Math., vol. 564, Amer. Math. Soc.,
  Providence, RI, 2012, pp.~145--176. \MR{2894632}

\bibitem{Faltings:93}
Gerd Faltings, \emph{Stable {$G$}-bundles and projective connections}, J.
  Algebraic Geom. \textbf{2} (1993), no.~3, 507--568. \MR{1211997 (94i:14015)}

\bibitem{Faltings:94}
\bysame, \emph{A proof for the {V}erlinde formula}, J. Algebraic Geom.
  \textbf{3} (1994), no.~2, 347--374. \MR{1257326 (95j:14013)}

\bibitem{FSV2}
Boris Feigin, Vadim Schechtman, and Alexander Varchenko, \emph{On algebraic
  equations satisfied by hypergeometric correlators in {WZW} models. {II}},
  Comm. Math. Phys. \textbf{170} (1995), no.~1, 219--247. \MR{1331699}

\bibitem{FSV1}
Boris Feigin, Vadim~V. Schechtman, and Alexander Varchenko, \emph{On algebraic
  equations satisfied by hypergeometric correlators in {WZW} models. {I}},
  Comm. Math. Phys. \textbf{163} (1994), no.~1, 173--184. \MR{1277938}

\bibitem{Ginzburg}
Victor Ginzburg, \emph{Resolution of diagonals and moduli spaces}, The moduli
  space of curves ({T}exel {I}sland, 1994), Progr. Math., vol. 129,
  Birkh\"{a}user Boston, Boston, MA, 1995, pp.~231--266. \MR{1363059}

\bibitem{Harder}
G\"{u}nter Harder, \emph{Halbeinfache {G}ruppenschemata \"{u}ber
  {D}edekindringen}, Invent. Math. \textbf{4} (1967), 165--191. \MR{225785}

\bibitem{Hitchin:90}
Nigel Hitchin, \emph{Flat connections and geometric quantization}, Comm. Math.
  Phys. \textbf{131} (1990), no.~2, 347--380. \MR{1065677 (91g:32022)}

\bibitem{KacWakimoto:88}
Victor~G. Kac and Minoru Wakimoto, \emph{Modular and conformal invariance
  constraints in representation theory of affine algebras}, Adv. in Math.
  \textbf{70} (1988), no.~2, 156--236. \MR{954660 (89h:17036)}

\bibitem{KZ}
Vadim~G. Knizhnik and Alexander~B. Zamolodchikov, \emph{Current algebra and
  {W}ess-{Z}umino model in two dimensions}, Nuclear Phys. B \textbf{247}
  (1984), no.~1, 83--103. \MR{853258}

\bibitem{Kohno}
Toshitake Kohno, \emph{Monodromy representations of braid groups and
  {Y}ang-{B}axter equations}, Ann. Inst. Fourier (Grenoble) \textbf{37} (1987),
  no.~4, 139--160. \MR{927394}

\bibitem{kumarbook}
Shrawan Kumar, \emph{Demazure character formula in arbitrary {K}ac-{M}oody
  setting}, Invent. Math. \textbf{89} (1987), no.~2, 395--423. \MR{894387}

\bibitem{KNR:94}
Shrawan Kumar, M.~S. Narasimhan, and A.~Ramanathan, \emph{Infinite
  {G}rassmannians and moduli spaces of {$G$}-bundles}, Math. Ann. \textbf{300}
  (1994), no.~1, 41--75. \MR{1289830 (96e:14011)}

\bibitem{KumarNarasimhan:97}
Shrawan Kumar and M.S. Narasimhan, \emph{Picard group of the moduli spaces of
  {$G$}-bundles}, Math. Ann. \textbf{308} (1997), no.~1, 155--173. \MR{1446205
  (98d:14014)}

\bibitem{Laszlo}
Yves Laszlo, \emph{Hitchin's and {WZW} connections are the same}, J.
  Differential Geom. \textbf{49} (1998), no.~3, 547--576. \MR{1669720
  (2000e:14012)}

\bibitem{LaszloSorger:97}
Yves Laszlo and Christoph Sorger, \emph{The line bundles on the moduli of
  parabolic {$G$}-bundles over curves and their sections}, Ann. Sci. \'Ecole
  Norm. Sup. (4) \textbf{30} (1997), no.~4, 499--525. \MR{1456243 (98f:14007)}

\bibitem{LogaresMartens:10}
Marina Logares and Johan Martens, \emph{Moduli of parabolic {H}iggs bundles and
  {A}tiyah algebroids}, J. Reine Angew. Math. \textbf{649} (2010), 89--116.
  \MR{2746468}

\bibitem{Mathieu}
Olivier Mathieu, \emph{Formules de caract\`eres pour les alg\`ebres de
  {K}ac-{M}oody g\'{e}n\'{e}rales}, Ast\'{e}risque (1988), no.~159-160, 267.
  \MR{980506}

\bibitem{MehtaSeshadri}
Vikram Mehta and C.S. Seshadri, \emph{Moduli of vector bundles on curves with
  parabolic structures}, Math. Ann. \textbf{248} (1980), no.~3, 205--239.
  \MR{575939}

\bibitem{MooreSeiberg:90}
Gregory Moore and Nathan Seiberg, \emph{Lectures on {RCFT}}, Superstrings '89
  ({T}rieste, 1989), World Sci. Publ., River Edge, NJ, 1990, pp.~1--129.
  \MR{1159969}

\bibitem{MW}
Swarnava Mukhopadhyay and Richard Wentworth, \emph{Generalized theta functions,
  strange duality, and odd orthogonal bundles on curves}, Comm. Math. Phys.
  \textbf{370} (2019), no.~1, 325--376. \MR{3982698}

\bibitem{Namba}
Makoto Namba, \emph{Branched coverings and algebraic functions}, Pitman
  Research Notes in Mathematics Series, vol. 161, Longman Scientific \&
  Technical, Harlow; John Wiley \& Sons, Inc., New York, 1987. \MR{933557}

\bibitem{NarSesh}
M.~S. Narasimhan and C.~S. Seshadri, \emph{Stable and unitary vector bundles on
  a compact {R}iemann surface}, Ann. of Math. (2) \textbf{82} (1965), 540--567.
  \MR{184252}

\bibitem{Novikov:82}
Sergei.~P. Novikov, \emph{The {H}amiltonian formalism and a multivalued
  analogue of {M}orse theory}, Uspekhi Mat. Nauk \textbf{37} (1982),
  no.~5(227), 3--49, 248. \MR{676612}

\bibitem{Pauly:96}
Christian Pauly, \emph{Espaces de modules de fibr\'es paraboliques et blocs
  conformes}, Duke Math. J. \textbf{84} (1996), no.~1, 217--235. \MR{1394754
  (97h:14022)}

\bibitem{RamaHitchin}
T.~R. Ramadas, \emph{Faltings' construction of the {K}-{Z} connection}, Comm.
  Math. Phys. \textbf{196} (1998), no.~1, 133--143. \MR{1643521}

\bibitem{Ramanathan:75}
A.~Ramanathan, \emph{Stable principal bundles on a compact {R}iemann surface},
  Math. Ann. \textbf{213} (1975), 129--152. \MR{369747}

\bibitem{Ran}
Ziv Ran, \emph{Jacobi cohomology, local geometry of moduli spaces, and
  {H}itchin connections}, Proc. London Math. Soc. (3) \textbf{92} (2006),
  no.~3, 545--580. \MR{2223536}

\bibitem{Reshetikhin}
Nicolai Reshetikhin, \emph{The {K}nizhnik-{Z}amolodchikov system as a
  deformation of the isomonodromy problem}, Lett. Math. Phys. \textbf{26}
  (1992), no.~3, 167--177. \MR{1199740}

\bibitem{SVCoh}
Vadim~V. Schechtman and Alexander~N. Varchenko, \emph{Arrangements of
  hyperplanes and {L}ie algebra homology}, Invent. Math. \textbf{106} (1991),
  no.~1, 139--194. \MR{1123378}

\bibitem{SS}
Peter Scheinost and Martin Schottenloher, \emph{Metaplectic quantization of the
  moduli spaces of flat and parabolic bundles}, J. Reine Angew. Math.
  \textbf{466} (1995), 145--219. \MR{1353317}

\bibitem{Selberg}
Atle Selberg, \emph{On discontinuous groups in higher-dimensional symmetric
  spaces}, Contributions to function theory (internat. {C}olloq. {F}unction
  {T}heory, {B}ombay, 1960), Tata Institute of Fundamental Research, Bombay,
  1960, pp.~147--164. \MR{0130324}

\bibitem{SeshadriI}
C.~S. Seshadri, \emph{Moduli of {$\pi$}-vector bundles over an algebraic
  curve}, Questions on {A}lgebraic {V}arieties ({C}.{I}.{M}.{E}., {III}
  {C}iclo, {V}arenna, 1969), Edizioni Cremonese, Rome, 1970, pp.~139--260.
  \MR{0280496}

\bibitem{Sorger}
Christoph Sorger, \emph{La formule de {V}erlinde}, no. 237, 1996, S\'{e}minaire
  Bourbaki, Vol. 1994/95, pp.~Exp. No. 794, 3, 87--114. \MR{1423621}

\bibitem{Sorger99}
\bysame, \emph{On moduli of {$G$}-bundles of a curve for exceptional {$G$}},
  Ann. Sci. \'{E}cole Norm. Sup. (4) \textbf{32} (1999), no.~1, 127--133.
  \MR{1670528}

\bibitem{SorgerGbundle}
\bysame, \emph{Lectures on moduli of principal {$G$}-bundles over algebraic
  curves}, School on {A}lgebraic {G}eometry ({T}rieste, 1999), ICTP Lect.
  Notes, vol.~1, Abdus Salam Int. Cent. Theoret. Phys., Trieste, 2000,
  pp.~1--57. \MR{1795860}

\bibitem{ST}
Xiaotao Sun and I-Hsun Tsai, \emph{Hitchin's connection and differential
  operators with values in the determinant bundle}, J. Differential Geom.
  \textbf{66} (2004), no.~2, 303--343. \MR{2106127}

\bibitem{Teleman}
Constantin Teleman, \emph{Lie algebra cohomology and the fusion rules}, Comm.
  Math. Phys. \textbf{173} (1995), no.~2, 265--311. \MR{1355626}

\bibitem{TelWood}
Constantin Teleman and Christopher Woodward, \emph{Parabolic bundles, products
  of conjugacy classes and {G}romov-{W}itten invariants}, Ann. Inst. Fourier
  (Grenoble) \textbf{53} (2003), no.~3, 713--748. \MR{2008438}

\bibitem{Tsuchimoto}
Yoshifumi Tsuchimoto, \emph{On the coordinate-free description of the conformal
  blocks}, J. Math. Kyoto Univ. \textbf{33} (1993), no.~1, 29--49. \MR{1203889}

\bibitem{TK88}
Akihiro Tsuchiya and Yukihiro Kanie, \emph{Vertex operators in conformal field
  theory on {${\bf P}^1$} and monodromy representations of braid group},
  Conformal field theory and solvable lattice models ({K}yoto, 1986), Adv.
  Stud. Pure Math., vol.~16, Academic Press, Boston, MA, 1988, pp.~297--372.
  \MR{972998}

\bibitem{TUY:89}
Akihiro Tsuchiya, Kenji Ueno, and Yasuhiko Yamada, \emph{Conformal field theory
  on universal family of stable curves with gauge symmetries}, Integrable
  systems in quantum field theory and statistical mechanics, Adv. Stud. Pure
  Math., vol.~19, Academic Press, Boston, MA, 1989, pp.~459--566. \MR{1048605
  (92a:81191)}

\bibitem{VanGeemenDeJong:98}
Bert van Geemen and Aise~Johan de~Jong, \emph{On {H}itchin's connection}, J.
  Amer. Math. Soc. \textbf{11} (1998), no.~1, 189--228. \MR{1469656}

\bibitem{welters}
Gerald~E. Welters, \emph{Polarized abelian varieties and the heat equations},
  Compositio Math. \textbf{49} (1983), no.~2, 173--194. \MR{704390}

\bibitem{Witten:84}
Edward Witten, \emph{Nonabelian bosonization in two dimensions}, Comm. Math.
  Phys. \textbf{92} (1984), no.~4, 455--472. \MR{736403}

\bibitem{Witten89}
\bysame, \emph{Quantum field theory and the {J}ones polynomial}, Braid group,
  knot theory and statistical mechanics, Adv. Ser. Math. Phys., vol.~9, World
  Sci. Publ., Teaneck, NJ, 1989, pp.~239--329. \MR{1062429}

\end{thebibliography}

\end{document}